\newcommand{\Z}{\mathbb{Z}}
\newcommand{\C}{\mathbb{C}}
\newcommand{\dom}{\mathscr{P}^+}
\newcommand{\G}{\mathcal{G}}
\newcommand{\B}{\mathcal{B}}
\newcommand{\cL}{\mathscr{L}}
\newcommand{\cH}{\mathcal{H}}
\newcommand{\cP}{\mathcal{P}}
\newcommand{\g}{\mathfrak{g}}
\newcommand{\p}{\mathfrak{p}}
\newcommand{\fb}{\mathfrak{b}}
\newcommand{\h}{\mathfrak{h}}
\newcommand{\fu}{\mathfrak{u}}
\newcommand{\fn}{\mathfrak{n}}
\newcommand{\cU}{\mathcal{U}}
\newcommand{\cO}{\mathscr{O}}
\newcommand{\cI}{\mathscr{I}}
\newcommand{\X}{\mathcal{X}}
\DeclareMathOperator{\ad}{ad}
\DeclareMathOperator{\Hom}{Hom}
\DeclareMathOperator{\Ker}{Ker}
\DeclareMathOperator{\Ann}{Ann}
\DeclareMathOperator{\Image}{Image}
\newtheorem{Lem}{Lemma}[section]
\newtheorem{Thm}[Lem]{Theorem}
\newtheorem{Def}[Lem]{Definition}
\newtheorem{Cor}[Lem]{Corollary}
\newtheorem{Ex}[Lem]{Example}
\newtheorem{Rmk}[Lem]{Remark}
\newtheorem{Prop}[Lem]{Proposition}
\title[Root components for tensor product]{Root components for tensor product of affine Kac-Moody Lie algebra modules} 
\author{Samuel Jeralds}
\author{Shrawan Kumar}
\address{S. Jeralds and S. Kumar: Department of Mathematics, University of North Carolina, Chapel Hill, NC 27599-3250, USA}
\email{sjj280@live.unc.edu ; shrawan@email.unc.edu} 
\begin{document}

\maketitle

\section{Abstract}  Let $\mathfrak{g}$ be an affine Kac-Moody Lie algebra and let $\lambda, \mu$ be two dominant integral weights for $\mathfrak{g}$. We prove that under some mild restriction, for any positive root $\beta$,  $V(\lambda)\otimes V(\mu)$ contains $V(\lambda+\mu-\beta)$ as a 
component, where $V(\lambda)$ denotes the integrable highest weight (irreducible) $\mathfrak{g}$-module with highest weight $\lambda$. This extends the corresponding result by Kumar from the case of finite dimensional semisimple Lie algebras to the affine Kac-Moody Lie algebras. One crucial ingredient in the proof is the action of Virasoro algebra via the Goddard-Kent-Olive construction on the tensor product  $V(\lambda)\otimes V(\mu)$. Then, we prove the corresponding geometric results including the higher cohomology vanishing on the $\G$-Schubert varieties in the product partial flag variety $\G/\cP\times \G/\cP$ with coefficients in certain sheaves 
coming from the ideal sheaves of $\G$-sub Schubert varieties. This allows us to prove the surjectivity of the Gaussian map.

\section{Introduction}

Let $\mf[g]$ be a symmetrizable Kac--Moody Lie algebra, and fix two dominant integral weights $\lambda$, $\mu \in \dom$. To these, we can associate the integrable, highest weight (irreducible) representations $V(\lambda)$ and $V(\mu)$. Then, the content of the tensor decomposition problem is to express the product $V(\lambda) \otimes V(\mu)$ as a direct sum of irreducible components; that is, to find the decomposition 
$$
V(\lambda) \otimes V(\mu) = \bigoplus_{\nu \in \dom} V(\nu)^{\oplus m_{\lambda, \mu}^\nu},
$$
where $m_{\lambda, \mu}^\nu \in \Z_{\geq 0}$ is the multiplicity of $V(\nu)$ in $V(\lambda) \otimes V(\mu)$. While this is a classical problem with a straightforward statement, determining the multiplicities $m_{\lambda, \mu}^\nu$ exactly--or even determining when $m_{\lambda, \mu}^\nu > 0$--is a challenging endeavor. Various algebraic, geometric, and combinatorial methods have been developed to understand the tensor decomposition problem; see \cite{Ku4} for a survey in the case of (finite dimensional) semisimple Lie algebras. 

While having a complete description for the components of a tensor product is desirable, many significant results in the literature demonstrate the existence of "families" of components; that is, components that are uniformly described and exist for tensor product decompositions regardless of $\mf[g]$. One such example is given by the {\it  root components} $V(\lambda + \mu-\beta)$ for a positive root $\beta$.

We show the existence of root components for affine Lie algebras generalizing the corresponding result in the finite case (i.e., when $\mf[g]$ is a semisimple Lie algebra) as in \cite{Ku1}. Recall that by a {\it Wahl triple} (introduced in \cite{Ku1} though not christened as {\it Wahl triple} there) we mean a triple $(\lambda, \mu, \beta) \in ({\dom})^2 \times \Phi^+$ such that 
\begin{itemize}
\item[(P1)] $\lambda+\mu-\beta \in \dom$, and 
\item[(P2)] If $\lambda(\alpha_i^\vee)=0$ or $\mu(\alpha_i^\vee)=0$, then $\beta- \alpha_i \not \in \Phi \sqcup \{0\}$,
\end{itemize} 
where  $\Phi$ (resp. $\Phi^+$) is the set of all the roots (resp., positive roots).

The main representation theoretic result of the paper is the following theorem (cf. Theorem \ref{mainthm}).

\vskip1ex

\noindent
{\bf Theorem I.} {\it For any affine Kac-Moody Lie algebra $\mf[g]$ and Wahl triple $(\lambda, \mu, \beta) \in ({\dom})^2 \times \Phi^+$, 
$$V(\lambda+\mu-\beta) \subset V(\lambda) \otimes V(\mu).$$}

We construct the proof in three parts. First, notice that the conditions (P1) and (P2) are invariant under adding $\delta$; that is, if $(\lambda, \mu, \beta)$ is a Wahl triple, then so is $(\lambda, \mu, \beta +k\delta)$ for any $k \in \Z_{\geq 0}$. This allows us to make use of the Goddard-Kent-Olive  construction of the Virasoro algebra action on 
the tensor product  $V(\lambda) \otimes V(\mu)$ and we
 explore its action on the subspaces $W^{\lambda+\mu-\beta}$ (as in Definition \ref{W}). This
reduces the problem  to certain `maximal root components' via Proposition \ref{L0}. In the second part, closely following the construction of root components for simple Lie algebras as in \cite{Ku1}, we show the existence of the bulk of the maximal root components.  Finally, in the third part, we construct the remaining maximal root components that are excluded from the previous methods explicitly using familiar, but ad hoc, constructions from the general tensor decomposition problem using the PRV components. 

We next prove the corresponding geometric results. Let $\G$ be the  `maximal' Kac-Moody group associated to the symmetrizable Kac-Moody Lie algebra $\g$ and let $\cP$ be a standard parabolic subgroup of $\G$ corresponding to a subset $S$ of the set of simple roots $\{\alpha_1, \cdots,\alpha_\ell\}$, i.e., $S$ is the set of simple roots of the Levi group of $\cP$. In the sequel, we abbreviate $S$ as the subset of $\{1, \cdots, \ell\}$. In particular, for $S=\emptyset$, we have the standard Borel subgroup $\B$ (corresponding to the  Borel subalgebra $\fb$).
Let $W$ be the Weyl group of $\g$ and let $W_\cP'$ be the set of smallest length coset representatives in $W/W_\cP$, where $W_\cP$ is the subgroup of $W$ generated by the simple reflections $\{s_k\}_{k\in S}$. 
For any pro-algebraic $\cP$-module $M$, by $\cL(M)$ we 
mean the corresponding homogeneous vector bundle on $\X_\cP:=\G/\cP$ associated to the 
principal $\cP$-bundle: $\G \to \G/\cP$ by the representation $M$ of $\cP$. 

For any integral weight $\lambda \in \h^*$ (where $\h$ is the Cartan subalgebra of $\g$ with the corresponding standard maximal torus $\cH$ of $\G$), such that $\lambda(\alpha_k^\vee)= 0$ for all $k \in S$, the one 
dimensional $\cH$-module $\mathbb{C}_\lambda$  (given by the character $\lambda$) admits a unique $\cP$-module 
structure (extending the $\cH$-module structure); in particular, we have the 
line bundle $\cL(\C_\lambda)$ on $\X_\cP$. We abbreviate the line bundle $\cL(\C_{-\lambda})$
 by $\cL(\lambda)$ and its restriction to the Schubert variety $X_w^\cP :=\overline{\B w\cP/\cP}$ by $\cL_w(\lambda)$ (for any $w\in W_\cP'$). Given two line bundles  $\cL(\lambda)$ and   $\cL(\mu)$, we can form their 
external tensor product to get the line bundle $\cL(\lambda\boxtimes \mu)$ on $\X_\cP\times \X_\cP$.
A dominant integral weight $\mu$ is called {\it $S$-regular} if $\mu(\alpha_k^\vee) = 0$ if and only if $k\in S$. The set of such weights is denoted by ${\dom_S}^o$.

Then, we prove that, for any $\mu\in {\dom_S}^o$ and $w\in W_\cP'$ such that $X_w^\cP$ is $\cP$-stable under the left multiplication:
$$H^p(X_w^\cP, \cI_e^k \otimes \cL_w(\mu)) = H^p(X_w^\cP, (\cO_w/\cI_e^k) \otimes \cL_w(\mu)) =
0, \,\,\text{for all $p > 0, k=1,2$},$$
where $\cI_e$ is the ideal sheaf of $X_w^\cP$ at the base point $e$ and $\mathscr{O}_w$  denotes the structure sheaf of $X_w^\cP$. We further explicitly determine $H^0(X_w^\cP, \cI_e^2 \otimes \cL_w(\mu))$ and $H^0(X_w^\cP, (\mathscr{O}_w/\cI_e^2) \otimes \cL_w(\mu))$ (cf. Proposition \ref{prop11.1} and Corollary \ref{remark11.1}).

For any $w\in W_\cP'$ such that  the Schubert variety $X_w^\cP$ is $\cP$-stable, define the $\G$-Schubert variety:
      $$\hat{\X}_w^\cP:= \G\times^{\cP} X_w^\cP.$$ 
      Consider the isomorphism:
      $$\delta:\G\times^\cP \X_\cP \simeq \X_\cP \times \X_\cP,\,\,\,[g, x]\mapsto (g\cP, gx),\,\,\text{for $g\in \G$ and $x\in \X_\cP$}.$$
    We have the canonical  embedding   
       $$\hat{\X}_w^\cP \hookrightarrow    \G\times^\cP \X_\cP .$$  
       In particular, we can restrict the line bundle $\cL(\lambda\boxtimes \mu)$ via the above isomorphism $\delta$ to 
      $\hat{\X}_w^\cP$ to get the line bundle denoted   $\cL_w(\lambda\boxtimes \mu)$.   Then, for $\lambda, \mu \in  {\dom_S}^o, k=1,2, w\in W_\cP'$ as above and $p\geq 0$,      
      we determine 
      (cf. Lemma \ref{lem12.1}):
  $$H^p(\hat{\X}_w^\cP, \hat{\cI}^k_e\otimes \cL_w(\lambda\boxtimes \mu))\,\,\text{and}\,\, H^p(\hat{\X}_w^\cP, (\hat{\cO}_w/\hat{\cI}^k_e)\otimes \cL_w(\lambda\boxtimes \mu)),$$
  in terms of the cohomology of the partial flag variety $\G/\cP$ with coefficients in explicit homogeneous vector bundles,
  where $\hat{\cI}_e$ denotes the ideal sheaf of $\widehat{\X}_e^\cP$ in  $ \hat{\X}_w^\cP$
and  $\hat{\cO}_w$ is the structure sheaf of $\hat{\X}_w^\cP$. In fact, we show that 
$$H^p(\hat{\X}_w^\cP, \hat{\cI}_e\otimes \cL_w(\lambda\boxtimes \mu))  =0,\,\,\text{for all $p>0$}. $$
 Let $\Phi^+\subset \h^*$ be the set of positive roots, $\Phi^+_S :=
\Phi^+\cap (\oplus_{k\in S}\,\mathbb{Z}_{\geq 0}\alpha_k)$ and  $\Phi^+(S):= \Phi^+ \setminus \Phi^+_S.$

We next study the vanishing of the first cohomology $H^1(\hat{\X}_w^\cP, \hat{\cI}_e^2\otimes \cL_w(\lambda\boxtimes \mu))$ and prove the following crucial result (cf. Proposition \ref{prop12.1}):
 
\vskip1ex

\noindent
{\bf Proposition I.} {\it  Let $\g$ be an affine Kac-Moody Lie algebra. Then, for any   $\lambda, \mu \in  {\dom_S}^o $ (where $S$ is an arbitrary subset of the simple roots of $\g$) and  any $w\in W_\cP'$ such that  the Schubert variety $X_w^\cP$ is $\cP$-stable, consider the following two conditions:
  \vskip1ex
  
    (a) $H^1(\hat{\X}_w^\cP, \hat{\cI}_e^2\otimes \cL_w(\lambda\boxtimes \mu)) = 0.$
    \vskip1ex
    
    (b) For all the real roots $\beta \in \Phi^+(S)$, satisfying $S\subset \{0\leq i \leq \ell: \beta-\alpha_i \not \in \Phi^+ \sqcup \{0\}\}$ and $\lambda +\mu -\beta \in \dom$, there exists a 
        $f_\beta \in \Hom_\fb\left(\C_{\lambda +\mu -\beta}\otimes V(\lambda)^\vee, V(\mu)\right)$ such that 
        $$X_\beta(f_\beta(\C_{\lambda +\mu -\beta}\otimes  v_\lambda^*))\neq 0,\,\,\text{for $X_\beta\neq 0 \in \g_\beta$},$$
        where $V(\lambda)^\vee$ is the restricted dual of $V(\lambda)$,
 and $v_\lambda^* \neq 0\in [V(\lambda)^\vee]_{-\lambda}$.
 \vskip1ex
 
        Then, the condition (b) implies the condition (a). }

 Further, we show that under the assumptions of the above proposition, the condition (b) of the proposition is satisfied in all the cases except possibly $\mathring{\g}$ of type $F_4$ or $G_2$ (cf. Proposition \ref{prop12.2} for a more precise result). The proof of Proposition \ref{prop12.2} relies on explicit constructions of root components obtained in the earlier sections including that of the GKO operator and the following lemma (cf. Lemma \ref{lem12.5}):
 
 \vskip1ex
 \noindent
 {\bf Lemma II.}  {\it Let $(\lambda, \mu, \beta)$ be a Wahl triple for a real root $\beta$ and let $V(\lambda+\mu-\beta) \subset V(\lambda) \otimes V(\mu)$ be a $\delta$-maximal root component. Observe that $\beta \in \mathring{\Phi}^+$ or $\beta=\delta-\gamma$ for $\gamma \in \mathring{\Phi}^+$. Then, 
 for $\beta \in \mathring{\Phi}^+$, the validity of condition (b) of Proposition \ref{prop12.1} for $V(\lambda+\mu-\beta)$
 implies its validity for $V(\lambda+\mu-\beta-k\delta)$ for any $k \geq 0$. 

Moreover, if $\beta=\delta-\gamma$ for $\gamma \in \mathring{\Phi}^+$, then we have an identity connecting the  condition (b) of Proposition \ref{prop12.1} for $V(\lambda+\mu-\delta+\gamma)$ with that of 
$V(\lambda+\mu-\beta)$
(see the identity \eqref{eqn12.6.10}).}
 
 \vskip1ex
 
 Combining the above Proposition I  and Proposition \ref{prop12.2}, we obtain  the following main geometric result of the paper (cf. Thoerem \ref{thm12.1}):
 \vskip1ex

\noindent
{\bf Theorem III.} {\it   Let $\g$ be an affine Kac-Moody Lie algebra and let  $w\in W_\cP'$ be such that  the Schubert variety $X_w^\cP$ is $\cP$-stable. Then, for any   $\lambda, \mu \in  {\dom_S}^o $ (where $S$ is an arbitrary subset of the simple roots) such that the condition (b) of Proposition  \ref{prop12.1} is satisfied for all the Wahl triples $(\lambda, \mu, \beta)$ for any real root $\beta \in \Phi^+$,
   \begin{equation*}  H^p(\hat{\X}_w^\cP, \hat{\cI}_e^2\otimes \cL_w(\lambda\boxtimes \mu)) = 0,\,\,\,\text{for all $p>0$}.
   \end{equation*}
   In particular, the canonical Gaussian map 
    \begin{equation*}  
   H^0(\hat{\X}_w^\cP, \hat{\cI}_e\otimes \cL_w(\lambda\boxtimes \mu))   \to  H^0(\hat{\X}_w^\cP, (\hat{\cI}_e/\hat{\cI}_e^2)\otimes \cL_w(\lambda\boxtimes \mu)) \end{equation*}
   is surjective.  
   
   In particular,  the theorem holds for any simply-laced $\mathring{\g}$ and $\mathring{\g}$ of types $B_\ell, C_\ell$. Moreover, it also holds for   $\mathring{\g}$ of type $F_4$ in the case $\cP$ is the Borel subgroup $\B$.}

\vskip1ex

     As a fairly straight forward corollary of the above theorem, taking inverse limits, we get the following (cf. Corollary \ref{coro12.7}):
     
     \vskip1ex
     \noindent
     {\bf Corollary IV.} {\it  Under the notation and assumptions of the above theorem, the canonical Gaussian map 
 $$H^0(\X_\cP\times \X_\cP, \cI_D\otimes \cL(\lambda\boxtimes \mu))  \to H^0(\X_\cP\times \X_\cP, (\cI_D/\tilde{\cI}_D^2)\otimes \cL(\lambda\boxtimes \mu))$$
 is surjective,  where $\cI_D$ is the ideal sheaf of the diagonal $D \subset \X_\cP\times \X_\cP$ and  $\tilde{\cI}_D^2$ is defined as $ \varprojlim_w\,\hat{\cI}_e(w)^2.$ }

\section{Affine Lie algebras (Notation and Preliminaries)}

In this section, we recall the definition of affine Kac-Moody Lie algebras $\mf[g]$ 
and their root and weight lattices. For a more extensive treatment of $\mf[g]$ and its properties, see Chapters 6 and 7 of \cite{Kac}. 

Let $\mathring{\mf[g]}$ be a finite-dimensional simple Lie algebra over $\mathbb{C}$ with a fixed Borel subalgebra $\mathring{\mf[b]}$ and Cartan subalgebra $\mathring{\mf[h]}\subset \mathring{\mf[b]}$. We denote the rank of $\mathring{\mf[g]}$ (which is, by definition, the dimension of $\mathring{\mf[h]}$)  by $\ell$.
Let $\mathring{\Phi}\subset \mathring{\mf[h]}^*$ be the set 
of roots and $\mathring{\Phi}^+$ (resp. $\mathring{\Phi}^-$) be the subset of positive (resp. negative) roots. Then, the associated affine Kac-Moody Lie algebra is given, as a vector space, by 
$$
\mf[g] = \mathring{\mf[g]} \otimes \C[t, t^{-1}] \oplus \C K \oplus \C d,
$$
where $K$ is the central element and $d$ the derivation or scaling element. The Lie bracket in $\mf[g]$ is given by 
\begin{align} \label{bracket}
[x \otimes t^m+zK+\zeta d, x' \otimes t^n + z'K +\zeta' d] =& [x,x'] \otimes t^{m+n} + n\zeta x' \otimes t^n - m\zeta' x \otimes t^m \notag \\
&+ m \delta_{n, -m} (x | x')_{\mathring{\mf[g]}} K,
\end{align}
for $x, x' \in \mathring{\mf[g]}$, $n, m \in \Z$, $z, z', \zeta, \zeta' \in \C$, where $\delta_{m,-n}$ is the Kronecker delta and $( \cdot | \cdot)_{\mathring{\mf[g]}}$ is the invariant form on $\mathring{\mf[g]}$ normalized so that $(\theta | \theta)_{\mathring{\mf[g]}} =2$, where $\theta \in \mathring{\Phi}$ is the highest root of $\mathring{\mf[g]}$. {\it Through the paper we will always take this normalized form on $\mathring{\mf[g]}$.}

The Cartan subalgebra of $\mf[g]$ is given by $\mf[h]:= \mathring{\mf[h]} \oplus (\C K + \C d)$. We thus have
$$
\mf[h]^\ast:= \mathring{\mf[h]}^\ast \oplus (\C \delta + \C \Lambda_0),
$$
where $\delta$ is defined by $\delta(d)=1$, $\delta|_{\mathring{\mf[h]} \oplus \C K} =0$ and $\Lambda_0(K)=1$, $\Lambda_0|_{\mathring{\mf[h]} \oplus \C d} =0$ and $\mathring{\mf[h]}^\ast $ is the dual of $\mathring{\mf[h]}$ taking $d$ and $K$ to zero. 
 Setting $\alpha_0:= \delta-\theta$,  $\alpha_0^\vee:= K-\theta^\vee$ for $\theta$ as above, and $\{\alpha_i\}_{i=1}^{\ell}$, $\{\alpha_i^\vee\}_{i=1}^{\ell}$ the set of simple roots and coroots of $\mathring{\mf[g]}$, we have that $\{\alpha_i\}_{i=0}^{\ell}$, $\{\alpha_i^\vee\}_{i=0}^{\ell}$ are the simple roots and coroots of $\mf[g]$. 

Let $\Phi$ be the set of roots of $\mf[g]$ and $\Phi^+$ (resp. $\Phi^-$) be the subset of positive (resp. negative) roots. Then,  we can separate $\Phi = \Phi_{Re} \sqcup \Phi_{Im}$, a disjoint union of real and imaginary roots. These are given precisely by 
$$
\begin{aligned}
\Phi_{Re} &= \{ \beta + k \delta: \beta \in \mathring{\Phi}, \ k \in \Z\}, \\ 
\Phi_{Im} &= \{k \delta: k \in \Z \backslash 0 \}.
\end{aligned}
$$

\noindent Finally, let $\{\Lambda_i\}_{i=0}^{\ell}$ be the set of fundamental weights of $\mf[g]$ defined by $\Lambda_i(\alpha_j^\vee)=\delta_{ij}$. Then, the dominant integral weights of $\mf[g]$ are given by 
$
\dom =\displaystyle \left(\bigoplus_{i=0}^{\ell} \Z_{\geq 0} \Lambda_i \right) \oplus \C \delta.
$
The integrable highest weight (irreducible) $\mf[g]$-modules are parameterized by $\dom$. For $\lambda \in \dom$, let $V(\lambda)$ be the corresponding integrable highest weight $\mf[g]$-module.

Fix dominant weights $\lambda$, $\mu \in \dom$, and consider the tensor product $V(\lambda) \otimes V(\mu)$
 of representations. Note that, as $\delta(\alpha_i^\vee)=0$ for all $i$, the representation $V(k\delta)$ is one-dimensional for all $k \in \mathbb{C}$, and $V(\lambda) \otimes V(k\delta) \cong V(\lambda + k\delta)$. {\it Therefore, throughout the remainder of this paper, we can assume without loss of generality that $\lambda(d)=\mu(d)=0$ up to an appropriate twist by $V(k\delta)$. }

\begin{Def}
{\rm For $\lambda \in \dom$,  the integer $\lambda(K)$ is called  the \textit{level} of $\lambda$. Further, for any $v \in V(\lambda)$ (not necessarily a highest weight vector), we have  $K.v=\lambda(K)v$,  since $K$ is a central element  of $\mf[g]$. So, we say that $V(\lambda)$ has {\it level} $\lambda(K)$. }
\end{Def} 

If $\lambda$ is of level $l$ and $\mu$ is of level $m$, then each irreducible component of $V(\lambda) \otimes V(\mu)$ is of level $l+m$. Indeed, the Cartan component $V(\lambda+\mu)$ clearly has level $l+m$, and if $V(\nu) \subset V(\lambda) \otimes V(\mu)$, then  necessarily $\lambda+\mu-\nu \in \oplus_{i=0}^\ell \Z_{\geq 0} \alpha_i$, and $\alpha_i(K)=0$ for all $i$.

We also recall the following definition taken from \cite{BrKu} of $\delta$-\textit{maximal components of the tensor product}. 

\begin{Def} \label{dmax} ({\bf $\delta$-Maximal Components})
{\rm A component $V(\nu) \subset V(\lambda) \otimes V(\mu)$ is called {\it $\delta$-maximal} if $V(\nu+k\delta) \not \subset V(\lambda) \otimes V(\mu)$ for any $k > 0$. }
\end{Def}

\noindent Let $\mf[g]':=[\mf[g],\mf[g]]$ be the derived subalgebra of $\mf[g]$. By the Lie bracket given in (\ref{bracket}), we have  $\mf[g]'= \mathring{\mf[g]} \otimes \C[t, t^{-1}] \oplus \C K$. In particular, the $\mf[g]'$ action on $V(\lambda)$ cannot detect the weight $\delta$. Nevertheless, it is known that the restriction of $V(\lambda)$ to a $\mf[g]'$-module  remains irreducible (cf. Lemma 2.1.4 of \cite{Ku3}). This allows us to make the following definition. 

\begin{Def} \label{W}
{\rm Let $V(\nu) \subset V(\lambda) \otimes V(\mu)$ be a $\delta$-maximal component. We denote by $W^\nu$ the subspace 
$$
W^\nu :=\sum_{k \geq 0} V(\nu-k\delta)^{\oplus m_k} \subset V(\lambda) \otimes V(\mu),
$$
where $m_k:= m_{\lambda, \mu}^{\nu-k\delta}$ is the multiplicity of $V(\nu-k\delta)$ in $V(\lambda) \otimes V(\mu)$. }
\end{Def}

That is, $W^\nu$ is the $\nu$-isotypic component of $V(\lambda) \otimes V(\mu)$ with respect to the $\mf[g]'$-action. In the next section, we make use of the Virasoro algebra to more closely examine the structure of such $W^\nu$.

\section{Virasoro algebra and  Goddard--Kent--Olive Construction}

In this section, we recall the basics of the Virasoro algebra and its representation theory. In particular, we give an overview of the Goddard--Kent--Olive (GKO) construction of the Virasoro algebra and its action on tensor products of affine Lie algebra representations. We follow the exposition of \cite{KRR}. 

\begin{Def} {\rm The {\it Virasoro algebra} $Vir$ is a Lie algebra over $\C$ with basis $\{c, L_k | k \in \Z\}$ with commutation relations 
\begin{equation} \label{Vir}
[L_k, L_j] = (k-j)L_{k+j} + \frac{1}{12}(k^3-k) \delta_{k, -j}c, \ [Vir, c]=0.
\end{equation}}
\end{Def}

We set $Vir_0:= \C L_0 \oplus \C c$, and  the dual space $Vir_0^\ast:= \C h \oplus \C z$, where $\{h, z\}$ is the basis dual to that of $\{L_0, c\}$. Similar to the case of Kac--Moody representations, we have the notion of  highest weight representations of $Vir$:

\begin{Def} 
{\rm A representation $V$ of $Vir$ is defined to be a {\it highest weight representation with highest weight} $\lambda \in Vir_0^\ast$ if there is a vector $v \in V$ such that 
$$
X.v=\lambda(X)v \ \forall X \in Vir_0, \ \ L_k.v=0 \ \forall k \geq 1, \ \ V= U\left( \bigoplus_{k <0} \C L_k \right).v.
$$}
\end{Def}

The structure of highest weight $Vir$ representations is in many ways parallel to that of Kac--Moody representations. Denote by $V_\mu$ the $\mu$-weight space of $V$. Then, by the defining relations (\ref{Vir}) for $Vir$, we have  $L_{-k}: V_\mu \to V_{\mu+kh}$. Further, if $V$ has highest weight $\lambda$, then for any $v \in V$ (not necessarily highest weight vector) by the equation \eqref{Vir} we again have  $c.v=\lambda(c)v$. We refer to the value $\lambda(c)$ as the \textit{central charge} of the representation $V$. 

\begin{Def}
{\rm A $Vir$ representation $V$ is called {\it unitarizable} if there is a positive-definite Hermitian form $\langle \cdot | \cdot \rangle$ on $V$ satisfying $\langle L_k.v | w\rangle = \langle v | L_{-k}.w\rangle$ and $\langle cv | w\rangle = \langle v | cw\rangle$ for all $k \in \Z$ and $v, w \in V$.}
\end{Def}

The existence of a positive-definite form on a highest weight representation $V$ allows us to investigate the weight spaces of $V$. More specifically, the following lemma from \cite{BrKu} determines when the weight spaces $\lambda+kh$ of an irreducible, highest weight  representation with highest weight $\lambda$ are nontrivial.

\begin{Lem} \label{BKLem}
Let $V$ be a unitarizable, highest weight (irreducible) representation of $Vir$ with highest weight $\lambda$. Then, we have the following: 
\begin{enumerate}
\item If $\lambda(L_0) \neq 0$, then $V_{\lambda+kh} \neq 0$ for any $k \in \Z_{\geq 0}$.
\item If $\lambda(L_0) =0$ and $\lambda(c) \neq 0$, then $V_{\lambda+kh} \neq 0$ for all $k \geq 2$, and $V_{\lambda+h}=0$.
\item If $\lambda(L_0)=\lambda(c)=0$, then $V$ is one-dimensional. 
\end{enumerate}
\end{Lem}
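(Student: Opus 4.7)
The plan is to reduce everything to a single Hermitian norm calculation. Let $v$ denote a highest weight vector, normalized so $\|v\|^2 = 1$. Using the Hermitian property $\langle L_k u \,|\, w\rangle = \langle u \,|\, L_{-k} w\rangle$, the Virasoro relation $[L_k, L_{-k}] = 2k L_0 + \tfrac{k^3 - k}{12} c$, and the annihilation $L_k v = 0$ for $k \geq 1$, I would first compute
$$
\|L_{-k} v\|^2 \;=\; \langle v \,|\, L_k L_{-k} v\rangle \;=\; \langle v \,|\, [L_k, L_{-k}] v\rangle \;=\; 2k\,\lambda(L_0) \,+\, \frac{k^3 - k}{12}\,\lambda(c).
$$
Positive definiteness of the form forces the right-hand side to be non-negative for every $k \geq 1$; feeding in $k = 1$ gives $\lambda(L_0) \geq 0$, and the large-$k$ behavior gives $\lambda(c) \geq 0$, which I would use tacitly below.

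Each case then falls out. For (1), with $\lambda(L_0) > 0$ and $\lambda(c) \geq 0$, the right-hand side is strictly positive for every $k \geq 1$, so $L_{-k} v$ is a nonzero element of $V_{\lambda + kh}$; the case $k = 0$ is witnessed by $v$ itself. For (2), $\lambda(L_0) = 0$ and $\lambda(c) > 0$ give $\|L_{-1} v\|^2 = 0$, hence $L_{-1} v = 0$, while $\|L_{-k} v\|^2 = \tfrac{k^3 - k}{12}\lambda(c) > 0$ for $k \geq 2$ produces nonzero vectors in $V_{\lambda + kh}$ for all $k \geq 2$. To upgrade $L_{-1} v = 0$ to the stronger assertion $V_{\lambda + h} = 0$, I would invoke the cyclic presentation $V = U\bigl(\bigoplus_{k < 0} \C L_k\bigr) \cdot v$: any vector of weight $\lambda + h$ is a linear combination of PBW monomials $L_{-i_1} \cdots L_{-i_m} v$ with $i_j \geq 1$ and $i_1 + \cdots + i_m = 1$, forcing $m = 1$, $i_1 = 1$, so this weight space equals $\C L_{-1} v = 0$.

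The same cyclicity argument closes out (3): when $\lambda(L_0) = \lambda(c) = 0$, the norm formula gives $L_{-k} v = 0$ for every $k \geq 1$, and then any nontrivial PBW monomial applied to $v$ contains an innermost factor $L_{-i_m}$ annihilating $v$, leaving only the empty monomial. Hence $V = \C v$ is one-dimensional. The only step that takes any real thought is the upgrade from a single vector vanishing to an entire weight space vanishing in (2); this is essentially a PBW-plus-arithmetic observation, since all the analytic content has already been packaged into the one norm computation above.
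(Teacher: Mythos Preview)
Your proof is correct and follows essentially the same approach as the paper: both compute $\|L_{-k}v\|^2 = 2k\lambda(L_0) + \tfrac{k^3-k}{12}\lambda(c)$ via the commutator relation and read off the three cases from positivity. You are actually more careful than the paper in one place: the paper's proof only shows $L_{-1}v = 0$ and $L_{-k}v = 0$ for all $k$ in cases (2) and (3) respectively, leaving the PBW/cyclicity argument (needed to upgrade this to $V_{\lambda+h}=0$ and $V=\C v$) implicit, whereas you spell it out.
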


\begin{proof}
Let $v$ be the highest weight vector of $V$. Then, we have (for all $k >0$):
$$
0 \leq \langle L_{-k}v | L_{-k}v\rangle = \langle L_k L_{-k}v | v\rangle = (2k\lambda(L_0) + \frac{1}{12}(k^3-k)\lambda(c))\langle v | v\rangle.
$$
Thus, $\lambda(L_0)$ and $\lambda(c)$ both must  be nonnegative numbers. So, if $\lambda(L_0)\neq 0$, we have  $L_{-k}v \neq 0$ for any $k \geq 0$. If $\lambda(L_0)=0$, then we have $(k^3-k)\lambda(c) \geq 0$; if $\lambda(c) \neq 0$, this is nonzero for $k >1$ and zero for $k=1$, so that $L_{-1}v=0$ and $L_{-k}v \neq 0$ for $k >1$. Finally, if $\lambda(L_0)=\lambda(c)=0$, then we get that $L_{-k}v=0$ for all $k >0$, so $V$ must be one-dimensional. 
\end{proof}

\noindent
{\bf The Goddard--Kent--Olive construction:}
The similarity between representations of Kac--Moody algebras and the Virasoro algebra is not coincidental. A foundational result linking the two theories is given by the \textit{Sugawara construction}, which embeds $Vir \hookrightarrow \hat{U}(\mf[g])$ in a certain completion   $\hat{U}(\mf[g])$ of the enveloping algebra  ${U}(\mf[g])$
of an affine Lie algebra $\mf[g]$ such  that an integrable highest weight (irreducible)  representation $V(\lambda)$ of $\mf[g]$ becomes a unitarizable representation of $Vir$. We make use of a related construction, known as the {\it Goddard--Kent--Olive (for short  GKO)  construction}, to produce an action of $Vir$ on a tensor product $V(\lambda) \otimes V(\mu)$ of $\mf[g]$-modules. At its core, the GKO construction is "relative" and relies on the diagonal inclusion $\mathring{\mf[g]} \hookrightarrow \mathring{\mf[g]} \oplus \mathring{\mf[g]}$, but we will not give complete details of the construction here; see Lecture 10 of \cite{KRR} for a more in-depth treatment. The following proposition is the primary computational tool we need from this construction. 

\begin{Prop}(\cite{KRR}, Proposition 10.3) \label{GKO}
Let $\mf[g]$ be an affine Lie algebra and $\lambda, \mu \in \dom$ be weights with levels $l$, $m$ repsectively. Then, 

(1)  $V(\lambda) \otimes V(\mu)$ is a unitarizable $Vir$ representation with non-negative central charge
$$
(\dim \mathring{\mf[g]}) \left( \frac{l}{l+h^\vee}+\frac{m}{m+h^\vee}-\frac{l+m}{l+m+h^\vee} \right),
$$
where $h^\vee$ is the dual Coxeter number of $\mf[g]$ (\cite{Kac}, $\S$6.1).

\vskip1ex

(2) $L_0$ acts on $V(\lambda) \otimes V(\mu)$ by 
$$
\frac{1}{2} \left( \frac{(\lambda | \lambda+2\rho)}{l+h^\vee}+\frac{(\mu|\mu+2\rho)}{m+h^\vee}-\frac{\Omega}{l+m+h^\vee} \right),
$$
where $\Omega$ is the Casimir operator of $\mf[g]$ (\cite{Ku3}, $\S$1.5) and $( \cdot | \cdot)$ is the normalized form on $\mf[h]^\ast$ as in \cite{Ku3}, Lemma 13.1.8. 

\vskip1ex

(3) For all $k$, $[L_k, \mf[g]']=0$; i.e., the $L_k$ are intertwining operators for the representation of $\mf[g]'$ on $V(\lambda) \otimes V(\mu)$.
\end{Prop}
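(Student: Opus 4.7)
The plan is to deduce all three assertions from the Sugawara construction applied three ways on $V(\lambda)\otimes V(\mu)$ — the so-called \emph{coset} or GKO construction. The key input is the following well-known Sugawara fact for any affine $\mathfrak{g}$: if $V$ is a level-$l$ integrable highest weight module (with $l+h^\vee\neq 0$), and $\{u_a\},\{u^a\}$ are bases of $\mathring{\mathfrak{g}}$ dual with respect to the normalized form, then
$$T_k \;=\; \tfrac{1}{2(l+h^\vee)} \sum_{j\in \mathbb{Z}} \sum_a :\!u_a(-j)\,u^a(j+k)\!:$$
(normal ordering with $x(n):=x\otimes t^n$) is a well-defined operator on $V$, and the family $\{T_k\}_{k\in\mathbb{Z}}$ satisfies the Virasoro relations with central charge $c(l) = l\dim(\mathring{\mathfrak{g}})/(l+h^\vee)$ as well as the crucial intertwining relation $[T_k,x(n)]=-n\,x(n+k)$ for all $x\in\mathring{\mathfrak{g}}$, $n\in\mathbb{Z}$. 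Moreover, $T_0$ acts on $V(\lambda)$ as the scalar $(\lambda|\lambda+2\rho)/(2(l+h^\vee))$ plus the grading operator $-d$, and the action is unitarizable with respect to the Shapovalov form.

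On $V(\lambda)\otimes V(\mu)$ I would then construct three commuting Virasoro actions: $T_k^{\lambda}:=T_k\otimes 1$ at level $l$, $T_k^{\mu}:=1\otimes T_k$ at level $m$, and the Sugawara operators $\tilde T_k$ built from the diagonal embedding $\iota\colon \mathfrak{g}'\hookrightarrow \mathfrak{g}'\oplus \mathfrak{g}'$, $x(n)\mapsto x(n)\otimes 1+1\otimes x(n)$ (where $K$ acts on $V(\lambda)\otimes V(\mu)$ by $l+m$; note $l+m+h^\vee\neq 0$ is automatic in the integrable setting). The GKO operators are then
$$L_k \;:=\; T_k^{\lambda} + T_k^{\mu} - \tilde T_k.$$

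The three assertions follow from short algebraic checks. For (3): both $T_k^\lambda+T_k^\mu$ and $\tilde T_k$ have commutator $-n\,\iota(x(n+k))$ with any diagonal current $\iota(x(n))$, so $[L_k,\iota(x(n))]=0$, i.e.\ $[L_k,\mathfrak{g}']=0$. For (1): since $\tilde T_k$ is a (normal-ordered) quadratic expression in diagonal currents, (3) gives $[L_k,\tilde T_k]=0$; the decomposition $T_k^\lambda+T_k^\mu=\tilde T_k+L_k$ as a sum of two commuting Virasoro families forces central charges to add, whence $c_{L}=c(l)+c(m)-c(l+m)$, which is the displayed formula. Unitarizability is inherited: each Sugawara action is unitary with respect to the Shapovalov form, their tensor product is unitary, and the commuting orthogonal decomposition into the $\tilde T_k$- and $L_k$-actions transfers positive-definiteness to the coset Virasoro; in particular its central charge must be $\geq 0$. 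For (2): combine the scalar $T_0$-eigenvalues $(\lambda|\lambda+2\rho)/(2(l+h^\vee))$ and $(\mu|\mu+2\rho)/(2(m+h^\vee))$ with the identity $\tilde T_0 = \Omega/(2(l+m+h^\vee))$ on $V(\lambda)\otimes V(\mu)$ (this uses that, for the diagonal $\iota$, the quadratic Sugawara operator realizes the Casimir, together with the cancellation of the two $-d$ grading contributions against the one in $\tilde T_0$).

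The main obstacle is not conceptual but technical: one must verify that each $T_k$ is a well-defined operator on $V$ in spite of the infinite sum — this is where integrability and the weight-space decomposition enter, ensuring that on any weight vector only finitely many summands contribute. Once this is in place, all three claims reduce to the identities $[T_k,x(n)]=-n\,x(n+k)$ and the scalar computations of $T_0$ and the central charge for a single Sugawara action, which are the content of \cite{KRR}, Lecture 10.
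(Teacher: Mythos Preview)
Your outline is correct and follows the standard GKO/coset construction exactly as in \cite{KRR}, Lecture 10, which is precisely the reference the paper cites; the paper does not give an independent proof of this proposition. Your cancellation-of-$d$ remark in part (2) is also right once one notes that the diagonal $\tilde T_0$ contributes $-(d\otimes 1+1\otimes d)$, matching the two grading terms from $T_0^\lambda$ and $T_0^\mu$.
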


\begin{Rmk} \label{scalar}
{\rm As the Casimir $\Omega$ acts on the $\mf[g]$-module $V(\nu)$ by $(\nu | \nu+2\rho)$ (cf., \cite{Ku3},  2.1.16), we can easily compute the action of $L_0$ on any component $V(\nu)$ of $V(\lambda) \otimes V(\mu)$; it will act via a scalar depending only on $\lambda, \mu, \nu$.}
\end{Rmk}

Now, let $V(\nu) \subset V(\lambda) \otimes V(\mu)$ be a $\delta$-maximal component as in Definition \ref{dmax} and consider the subspace $W^\nu$ as in Definition \ref{W}. By Proposition \ref{GKO} (3), we can conclude the following immediate corollary.

\begin{Cor}
$W^\nu$ is a unitarizable $Vir$ representation. 
\end{Cor}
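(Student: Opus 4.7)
The plan is to derive the corollary directly from the unitarizable $Vir$-structure on $V(\lambda)\otimes V(\mu)$ supplied by Proposition \ref{GKO}(1). Unitarizability is inherited by any $Vir$-stable subspace, since positive-definiteness of a Hermitian form trivially restricts, and the adjoint identities $\langle L_k v \,|\, w \rangle = \langle v \,|\, L_{-k} w \rangle$ and $\langle c v \,|\, w \rangle = \langle v \,|\, c w \rangle$ are identities in the ambient space and therefore persist on any subspace. Thus the entire task reduces to verifying that the operators $L_k$ and the central element $c$ preserve $W^\nu$.

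To establish this $Vir$-stability, I would reinterpret $W^\nu$ as an isotypic component for the $\g'$-action on $V(\lambda)\otimes V(\mu)$. Because $\delta$ vanishes on $\g'$, each summand $V(\nu-k\delta)$ for $k\geq 0$ restricts to the same irreducible $\g'$-module as $V(\nu)$; this uses the irreducibility of $V(\lambda)|_{\g'}$ noted just before Definition \ref{W}. Consequently $W^\nu$ is precisely the $V(\nu)|_{\g'}$-isotypic summand of $V(\lambda)\otimes V(\mu)$ regarded as a $\g'$-module. By Proposition \ref{GKO}(3) we have $[L_k, \g']=0$ for every $k \in \Z$, so each $L_k$ is a $\g'$-intertwining operator, and $c$, being central in $Vir$, commutes with $\g'$ a fortiori. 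Since every $\g'$-intertwiner preserves every $\g'$-isotypic component, $W^\nu$ is $Vir$-stable.

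I do not anticipate any substantive obstacle here: the argument is a formal packaging of the commutation relation in Proposition \ref{GKO}(3) together with the general principle that intertwiners preserve isotypic decompositions. The one subtle point worth flagging is the identification of the various $\delta$-shifted pieces of $W^\nu$ with a single $\g'$-isotype, which is precisely the motivation for introducing the notion of $\delta$-maximal component in Definition \ref{dmax}; without $\delta$-maximality, one could lose pieces of the isotype living above $V(\nu)$ and the $L_{-k}$ could exit $W^\nu$.
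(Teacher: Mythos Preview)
Your proposal is correct and follows essentially the same approach as the paper, which simply cites Proposition \ref{GKO}(3) together with the observation (stated immediately after Definition \ref{W}) that $W^\nu$ is the $\nu$-isotypic component for the $\g'$-action. Your write-up just makes explicit the two ingredients the paper leaves implicit: that intertwiners preserve isotypic components, and that unitarizability restricts to $Vir$-stable subspaces.
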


In general, $W^\nu$ is not an irreducible $Vir$ representation. However, by Remark \ref{scalar},   $L_0$ acts on each $V(\nu-k\delta)$ as a scalar, so that each of these summands corresponds to a single $Vir$ weight space. 

\begin{Lem} \label{kshift} 
For any $k\in \Z$, $L_{-k}: V(\nu) \to V(\nu-k\delta)$, corresponding to shifting the $Vir$ weight by $kh$, where $\nu$ is not necessarily a $\delta$-maximal weight. 
\end{Lem}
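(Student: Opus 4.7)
\smallskip

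The plan is to combine the two key parts of Proposition~\ref{GKO}: the commutation of $L_{-k}$ with $\mf[g]'$ (part (3)) pins down the $\mf[g]'$-isomorphism type of $L_{-k}(V(\nu))$, while the explicit $L_0$-eigenvalue formula (part (2)) together with Remark~\ref{scalar} distinguishes the individual summands $V(\nu-j\delta)$ (which are all mutually isomorphic as $\mf[g]'$-modules).

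First, by Proposition~\ref{GKO}(3), $L_{-k}\colon V(\lambda)\otimes V(\mu)\to V(\lambda)\otimes V(\mu)$ is a $\mf[g]'$-module map. Its restriction to $V(\nu)$ is therefore either zero or, by irreducibility of $V(\nu)$ as a $\mf[g]'$-module (Lemma 2.1.4 of \cite{Ku3}), a $\mf[g]'$-embedding. The components of $V(\lambda)\otimes V(\mu)$ that are $\mf[g]'$-isomorphic to $V(\nu)$ are exactly those of the form $V(\nu-j\delta)$ for $j\in\Z$, since $\delta$ is invisible to $\mf[g]'$.

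Second, I would compute how $L_0$ distinguishes these summands. By Proposition~\ref{GKO}(2) and Remark~\ref{scalar}, on the component $V(\nu')$ the operator $L_0$ acts by the scalar
$$
a(\nu')=\tfrac{1}{2}\Bigl(\tfrac{(\lambda\mid\lambda+2\rho)}{l+h^\vee}+\tfrac{(\mu\mid\mu+2\rho)}{m+h^\vee}-\tfrac{(\nu'\mid\nu'+2\rho)}{l+m+h^\vee}\Bigr).
$$
Using the standard identities $(\delta\mid\delta)=0$, $(\delta\mid\nu)=\nu(K)=l+m$ (since $\nu$ has level $l+m$), and $(\delta\mid\rho)=h^\vee$, a short expansion gives
$$
(\nu-k\delta\mid\nu-k\delta+2\rho)=(\nu\mid\nu+2\rho)-2k(l+m+h^\vee),
$$
and hence $a(\nu-k\delta)=a(\nu)+k$.

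Third, the Virasoro relation $[L_0,L_{-k}]=kL_{-k}$ forces $L_{-k}$ to shift the $L_0$-eigenvalue by $+k$. Combining the two constraints, $L_{-k}(V(\nu))$ sits inside the $\mf[g]'$-isotypic summand of type $V(\nu)$ and inside the $L_0$-eigenspace of eigenvalue $a(\nu)+k=a(\nu-k\delta)$; by the computation above, this is exactly the $V(\nu-k\delta)$-isotypic piece of $V(\lambda)\otimes V(\mu)$. This is the content of the lemma, with the $Vir$-weight shift of $kh$ coming directly from $[L_0,L_{-k}]=kL_{-k}$.

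The only place any genuine care is needed is the identity $a(\nu-k\delta)-a(\nu)=k$, which hinges on the values of the bilinear form on $\delta$; everything else is a straightforward assembly of Proposition~\ref{GKO} with the fact that restriction to $\mf[g]'$ collapses the $\delta$-direction. I do not expect a real obstacle here — the lemma is essentially a bookkeeping statement about how the GKO Virasoro action detects precisely the $\delta$-grading inside each $\mf[g]'$-isotypic component.
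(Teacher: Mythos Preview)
Your proof is correct and follows essentially the same approach as the paper: both compute the $L_0$-eigenvalue on $V(\nu-k\delta)$ via the expansion of $(\nu-k\delta\mid\nu-k\delta+2\rho)$ and combine this with the fact that $L_{-k}$ shifts the $L_0$-eigenvalue by $k$. You are slightly more explicit than the paper in first invoking the $\mf[g]'$-commutation (Proposition~\ref{GKO}(3)) to confine $L_{-k}(V(\nu))$ to the isotypic block $W^\nu$ before using the $L_0$-eigenvalue to pick out the correct summand, whereas the paper leaves this step implicit.
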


\begin{proof}
As all of $W^\nu$ has the same non-negative central charge, we need only compute the change in the weight with respect to $h$. By Proposition \ref{GKO} and Remark \ref{scalar}, we have that $L_0$ acts on $V(\nu-k\delta)$ via 
$$
\begin{aligned}
&\frac{1}{2} \left(\frac{(\lambda| \lambda+2\rho)}{l+h^\vee}+\frac{(\mu|\mu+2\rho)}{m+h^\vee}-\frac{(\nu-k\delta|\nu-k\delta+2\rho)}{l+m+h^\vee} \right) \\
=&\frac{1}{2} \left(\frac{(\lambda| \lambda+2\rho)}{l+h^\vee}+\frac{(\mu|\mu+2\rho)}{m+h^\vee}-\frac{(\nu|\nu+2\rho)-2k(\nu|\delta)-2k(\rho|\delta)}{l+m+h^\vee} \right)\\
=&\frac{1}{2} \left(\frac{(\lambda| \lambda+2\rho)}{l+h^\vee}+\frac{(\mu|\mu+2\rho)}{m+h^\vee}-\frac{(\nu|\nu+2\rho)-2k(l+m)-2k(h^\vee)}{l+m+h^\vee} \right) \\
=&\frac{1}{2} \left(\frac{(\lambda| \lambda+2\rho)}{l+h^\vee}+\frac{(\mu|\mu+2\rho)}{m+h^\vee}-\frac{(\nu|\nu+2\rho)}{l+m+h^\vee} \right)+k.
\end{aligned}
$$
Now, as $L_0$ acts on $V(\nu)$  by $\frac{1}{2} \left(\frac{(\lambda| \lambda+2\rho)}{l+h^\vee}+\frac{(\mu|\mu+2\rho)}{m+h^\vee}-\frac{(\nu|\nu+2\rho)}{l+m+h^\vee} \right)$ and we have that $L_{-k}$ adds $kh$ to a weight space, we get the desired result.  
\end{proof}

 Therefore, combining Lemmas \ref{kshift} and \ref{BKLem} we get the following key proposition, which allows us to determine if the multiplicities $m_k=m_{\lambda, \mu}^{\nu-k\delta}$ as in Definition \ref{W} are nonzero. 

\begin{Prop}  \label{L0} Let $\lambda, \mu \in \dom$ be of positive levels $l, m$, respectively. Let $V(\nu) \subset V(\lambda) \otimes V(\mu)$ be a $\delta$-maximal component, and consider the $Vir$ subrepresentation $W^\nu$. Then, we have the following:

\vskip1ex
(1) If $\frac{(\lambda| \lambda+2\rho)}{l+h^\vee}+\frac{(\mu|\mu+2\rho)}{m+h^\vee}-\frac{(\nu|\nu+2\rho)}{l+m+h^\vee}  \neq 0$, we have $m_k \geq 1$ for all $k \geq 0$. 
\vskip1ex

(2) Else, we have $m_1 =0$ and $m_k \geq 1$ for $k=0$, $k \geq 2$.
\end{Prop}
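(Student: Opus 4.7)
The plan is to apply Lemma \ref{BKLem} to a carefully chosen Virasoro-cyclic submodule of $W^\nu$ and translate the conclusion into multiplicities via Lemma \ref{kshift}. First, fix a nonzero $\mf[g]'$-highest weight vector $v\in V(\nu)\subset W^\nu$ of $\mf[g]$-weight $\nu$. Because $V(\nu)$ is $\delta$-maximal, no component $V(\nu+k\delta)$ appears in $V(\lambda)\otimes V(\mu)$ for any $k\geq 1$; since each $L_k$ with $k\geq 1$ commutes with $\mf[g]'$ and, by Lemma \ref{kshift} applied with negative index, carries $V(\nu)$ into $V(\nu+k\delta)$, we conclude $L_kv=0$ for all $k\geq 1$. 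Thus $v$ is a Virasoro highest weight vector, whose $L_0$-eigenvalue is the scalar in Proposition \ref{GKO}(2) evaluated at $\nu$ and whose central charge is the strictly positive quantity in Proposition \ref{GKO}(1).

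Next, the Virasoro submodule $V_v:=U(Vir)\cdot v\subset W^\nu$ is a highest weight $Vir$-module carrying the positive-definite invariant Hermitian form inherited from $W^\nu$. Since the radical of an invariant Hermitian form is the maximal proper submodule, positive-definiteness forces $V_v$ to be irreducible, so Lemma \ref{BKLem} applies directly to $V_v$. In case (1), where the displayed expression is nonzero, $\lambda(L_0)\neq 0$ and Lemma \ref{BKLem}(1) yields $L_{-k}v\neq 0$ for all $k\geq 0$; by Lemma \ref{kshift}, each $L_{-k}v$ is a nonzero $\mf[g]'$-highest weight vector of $\mf[g]$-weight $\nu-k\delta$, producing a copy of $V(\nu-k\delta)$ inside $W^\nu$ and so $m_k\geq 1$. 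In case (2) we have $\lambda(L_0)=0$ with positive central charge, and Lemma \ref{BKLem}(2) gives $L_{-1}v=0$ together with $L_{-k}v\neq 0$ for $k=0$ and $k\geq 2$, producing $m_0\geq 1$ and $m_k\geq 1$ for all $k\geq 2$.

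The remaining assertion, and the main obstacle, is the vanishing $m_1=0$ in case (2). Suppose towards a contradiction that some nonzero $\mf[g]'$-highest weight vector $w$ of $\mf[g]$-weight $\nu-\delta$ exists in $W^\nu$. Every top-grade vector $u$ is itself a Virasoro highest weight vector by the $\delta$-maximality argument of the first paragraph, so the identity $\|L_{-1}u\|^2=2\lambda(L_0)\|u\|^2=0$ forces $L_{-1}$ to vanish on every top-grade vector; adjointness $L_1=L_{-1}^*$ then forces $L_1w=0$, making $w$ itself a Virasoro highest weight vector at $L_0$-eigenvalue $1$. The delicate step is to exclude such a $w$ altogether, equivalently to show the $Vir$-isotypic decomposition of $W^\nu$ contains no irreducible summand of highest $L_0$-weight $\lambda(L_0)+1$. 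This is where the technical heart of the proof sits, and I expect it to exploit the explicit GKO realization of $L_{-1}$ as a quadratic expression in the affine generators together with the $\mf[g]'$-structure of $V(\lambda)\otimes V(\mu)$, going beyond what pure Virasoro unitarity yields.
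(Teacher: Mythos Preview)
Your first two paragraphs reproduce the paper's argument essentially verbatim: choose a $\g$-highest weight vector $v_\nu \in V(\nu)$, observe that it is Virasoro-singular by $\delta$-maximality and Lemma~\ref{kshift}, generate a unitarizable irreducible highest weight $Vir$-submodule, and read off $m_k \geq 1$ in the stated ranges from Lemma~\ref{BKLem}. This is exactly what the paper does.

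Where you diverge is on the claim $m_1=0$ in case~(2). You correctly note that the vanishing $L_{-1}v_\nu=0$ inside the cyclic submodule does not by itself rule out other copies of $V(\nu-\delta)$ sitting in $W^\nu$, and you then anticipate that closing this gap must require the explicit quadratic GKO expression for $L_{-1}$ together with the $\g'$-module structure of $V(\lambda)\otimes V(\mu)$. But the paper supplies no such argument: its proof simply ends with ``Therefore, Lemma~\ref{BKLem} gives the result'' and offers nothing further for $m_1=0$ beyond what you already wrote. So the ``technical heart'' you are bracing for is not present in the paper; you have in fact been more careful than the authors on this point, not less. It is worth noting that the assertion $m_1=0$ is invoked in the paper only in the illustrative $\widehat{\mathfrak{sl}}_2$ example that follows and plays no role in the proof of Theorem~\ref{mainthm}, which relies exclusively on part~(1) of the proposition via Corollary~\ref{redux}.
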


\begin{proof}
Choose a nonzero highest weight vector $v_\nu \in V(\nu) \subset W^\nu$. By Lemma \ref{kshift}, we have $L_k(v_\nu)=0$ for all $k >0$, as there are no components of the form $V(\nu+k\delta)$ in $W^\nu$. Thus, $v_\nu$ is a highest weight vector for the $Vir$ action, and generates a highest weight, unitarizable (irreducible) $Vir$-submodule of $W^\nu$. Applying Lemma \ref{BKLem}, we   can determine if $L_{-k}(v_\nu)$ is nonzero  by computing the $L_0$-action on $v_\nu$ (done in the proof of Lemma  \ref{kshift}). But, $L_{-k}(v_\nu) \in V(\nu-k\delta)$  by Lemma \ref{kshift} and if nonzero it is the highest weight vector of $V(\nu-k \delta)$ under the $U(\mf[g]')$ action, since $[L_{-k}, \mf[g]']=0$. Therefore,  Lemma \ref{BKLem}
 gives the result. (Observe that $c$ acts by a nonzero scalar on 
$V(\lambda) \otimes V(\mu)$ by Proposition \ref{GKO} (1).)
\end{proof}

We conclude this section with two examples; the first is an alternate approach to \cite{Kac}, Exercise 12.16, and the second is an interpretation of root components for affine Lie algebras with respect to imaginary roots. 

\begin{Ex}
{\rm Consider $\mf[g]=\widehat{\mf[sl]}_2$, and let $\lambda=\mu=\Lambda_0$. Then, of course $V(2\Lambda_0) \subset V(\Lambda_0) \otimes V(\Lambda_0)$ is a $\delta$-maximal component. By Proposition \ref{GKO} and Remark \ref{scalar}, the $L_0$ action on $V(2\Lambda_0)$ is given by the scalar:
$$
\frac{1}{2}\left( \frac{(\Lambda_0|\Lambda_0+2\rho)}{3}+ \frac{(\Lambda_0|\Lambda_0+2\rho)}{3}- \frac{(2\Lambda_0|2\Lambda_0+2\rho)}{4}\right).
$$
But, we have $(\Lambda_0|\Lambda_0)=(\Lambda_0|\rho)=0$ (cf., \cite{Kac}, $\S$6.2), so that $L_0$ acts by $0$ on $V(2\Lambda_0)$. Thus,  by Proposition \ref{L0}, we have $V(2\Lambda_0-\delta) \not \subset V(\Lambda_0)\otimes V(\Lambda_0)$. But, $V(2\Lambda_0 -k\delta) \subset V(\Lambda_0) \otimes V(\Lambda_0) $ for any $k >1$; this agrees with Exercise 12.16 of \cite{Kac}.}
\end{Ex}

\begin{Ex} \label{imag}
{\rm For any affine Lie algebra $\mf[g]$, let $\lambda=\mu=\rho$. Then, of course,  $V(2\rho) \subset V(\rho) \otimes V(\rho)$ is $\delta$-maximal. We then get,  since $\rho$ is of level $h^\vee$, that $L_0$ acts on $V(2\rho)$ via 
$$
\frac{1}{2} \left( \frac{2(\rho | \rho+2\rho)}{2h^\vee} - \frac{(2\rho | 2\rho + 2\rho)}{3h^\vee} \right) = \frac{(\rho | \rho)}{6h^\vee} > 0.
$$
Thus,  by Proposition \ref{L0}, we have $V(2\rho-k\delta) \subset V(\rho) \otimes V(\rho)$ for any $k \geq 0$. }
\end{Ex}
\section{Some general results on tensor product decomposition} 

The following  proposition is crucial to our work on root components in the case of affine Lie algebras. For the semisimple Lie algebras, it is due to Kostant \cite{Kos}. 
\begin{Prop}\label{mult}
Let $\mf[g]$ be a symmetrizable Kac--Moody algebra, and fix $\lambda, \mu, \nu \in \dom$. Then, the multiplicity of $V(\nu)$ in $V(\lambda) \otimes V(\mu)$ is given by:
$$
m_{\lambda, \mu}^\nu = \dim \{v \in V(\mu)_{\nu-\lambda}: e_i^{\lambda(\alpha_i^\vee)+1}.v=0 \ \text{for all simple roots } \alpha_i\},
$$
where $e_i$ is a root vector corresponding to the root $\alpha_i$. 
\end{Prop}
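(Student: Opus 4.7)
The plan is to reformulate the multiplicity as the dimension of a space of highest weight vectors in $V(\lambda)\otimes V(\mu)$ and exhibit an explicit linear isomorphism with
\[
N := \{v \in V(\mu)_{\nu-\lambda} : e_i^{\lambda(\alpha_i^\vee)+1}v = 0 \text{ for all simple roots } \alpha_i\}.
\]
Since $V(\lambda)\otimes V(\mu)$ lies in the integrable Category $\mathcal{O}$, which is completely reducible in the symmetrizable Kac--Moody setting, one has $m_{\lambda,\mu}^\nu = \dim \mathrm{HW}^\nu$ where $\mathrm{HW}^\nu := \{w \in (V(\lambda)\otimes V(\mu))_\nu : \mathfrak{n}^+\cdot w = 0\}$. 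Using the one-dimensional top weight space $V(\lambda)_\lambda = \mathbb{C} v_\lambda$, I will define the leading-coefficient projection
\[
\pi \colon (V(\lambda)\otimes V(\mu))_\nu \to V(\mu)_{\nu-\lambda},\qquad v_\lambda \otimes u \mapsto u,\quad x\otimes u \mapsto 0 \text{ for } x \in V(\lambda)_\eta,\ \eta < \lambda,
\]
and prove that $\pi$ restricts to a linear isomorphism $\mathrm{HW}^\nu \xrightarrow{\sim} N$, which gives the proposition.

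The central technical device is to lift the problem to the Verma module $M(\lambda)$ via the short exact sequence
\[
0 \to K \to M(\lambda) \xrightarrow{q} V(\lambda) \to 0,\qquad K := \textstyle\sum_i U(\mathfrak{g}) f_i^{\lambda(\alpha_i^\vee)+1} v_\lambda,
\]
where the description of the kernel $K$ uses the dominance of $\lambda$. Since $M(\lambda) = U(\mathfrak{n}^-) v_\lambda$ is a free $U(\mathfrak{n}^-)$-module, I first establish the Verma analog: for each $v \in V(\mu)_{\nu-\lambda}$ there is a unique highest weight vector $\hat v \in (M(\lambda)\otimes V(\mu))^{\mathfrak{n}^+}_\nu$ with leading coefficient $v$, giving a canonical linear bijection $V(\mu)_{\nu-\lambda} \xrightarrow{\sim} (M(\lambda)\otimes V(\mu))^{\mathfrak{n}^+}_\nu$. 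The construction of $\hat v$ proceeds by induction on the $\mathfrak{n}^-$-depth in $M(\lambda)$: at each step, the equation $e_i \hat v = 0$ reduces to a linear system whose solvability is ensured by the freeness. One then descends via $q \otimes \mathrm{id}$; every $w \in \mathrm{HW}^\nu$ lifts (modulo $K\otimes V(\mu)$) to some $\hat v$, so the composition $V(\mu)_{\nu-\lambda} \to \mathrm{HW}^\nu$ is surjective.

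The main obstacle is to identify precisely which $v \in V(\mu)_{\nu-\lambda}$ have $\hat v \in K\otimes V(\mu)$ (i.e., descend to zero in $V(\lambda)\otimes V(\mu)$), since this kernel must agree in dimension with the complement of $N$. The approach is to choose a PBW basis of $U(\mathfrak{n}^-)$ adapted to the distinguished elements $f_i^{\lambda(\alpha_i^\vee)+1}$ and extract, from the expansion of $\hat v$, the coefficients along $f_i^{\lambda(\alpha_i^\vee)+1} v_\lambda$; using the commutator identity $[e_i, f_i^{m+1}] = (m+1)\, f_i^m (h_i - m)$ together with the defining property $e_i \hat v = 0$, a direct calculation should show that these coefficients are nonzero scalar multiples of $e_i^{\lambda(\alpha_i^\vee)+1}v \in V(\mu)_{\nu - s_i\cdot\lambda}$. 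Consequently $\hat v$ has nonzero image in $V(\lambda)\otimes V(\mu)$ exactly when $v \in N$, and, combined with the injectivity of the lift $v \mapsto \hat v$ and of the descent on these invariants, yields the desired isomorphism $\mathrm{HW}^\nu \xrightarrow{\sim} N$. All necessary ingredients---the freeness of $M(\lambda)$ over $U(\mathfrak{n}^-)$, the PBW theorem, and the generation of $K$ by the Serre-type elements---are available in the symmetrizable Kac--Moody setting, so Kostant's original semisimple argument carries over with only notational modifications.
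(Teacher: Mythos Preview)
Your overall strategy---identify $m_{\lambda,\mu}^\nu$ with highest weight vectors in $V(\lambda)\otimes V(\mu)$ and study them via the ``leading coefficient'' map $\pi$---is reasonable, but the Verma-module step contains a genuine error. The claimed bijection
\[
V(\mu)_{\nu-\lambda}\ \xrightarrow{\ \sim\ }\ \bigl(M(\lambda)\otimes V(\mu)\bigr)^{\fn^+}_\nu,\qquad v\mapsto \hat v,
\]
is \emph{false}. Freeness of $M(\lambda)$ over $U(\fn^-)$ does not guarantee that your inductive system $(e_i\otimes 1)\hat v_\alpha = -(1\otimes e_i)\hat v_{\alpha-\alpha_i}$ is solvable: what you need is surjectivity of $e_i$ on weight spaces of $M(\lambda)$, and this fails exactly at the singular vectors $f_i^{\lambda(\alpha_i^\vee)+1}v_\lambda$ (which exist precisely because $\lambda$ is dominant integral). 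A concrete counterexample is $\g=\mathfrak{sl}_2$, $\lambda=2$, $\mu=4$, $\nu=0$: here $V(4)_{-2}$ is one-dimensional, yet the unique $\fn^+$-invariant of weight $0$ in $M(2)\otimes V(4)$ is $f^3v_2\otimes v_4$, whose leading coefficient is \emph{zero}. Running your induction with $v=f^3v_4$ one finds at the third step the unsolvable equation $(e\otimes 1)(c\,f^3v_2\otimes v_4)=-36\,f^2v_2\otimes v_4$, since $ef^3v_2=0$. Thus the map $v\mapsto \hat v$ is not defined on all of $V(\mu)_{\nu-\lambda}$, and your subsequent kernel analysis (identifying when $\hat v\in K\otimes V(\mu)$) never gets off the ground.

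The paper's proof sidesteps this entirely by dualizing: it rewrites $\Hom_\g\bigl(V(\nu),V(\lambda)\otimes V(\mu)\bigr)$ as $\Hom_\fb\bigl(V(\lambda)^\vee\otimes\C_\nu,\,V(\mu)\bigr)$, where $V(\lambda)^\vee$ is the restricted dual (a \emph{lowest} weight module). The point is that $V(\lambda)^\vee$ has the clean presentation $U(\fn)\big/\langle e_i^{\lambda(\alpha_i^\vee)+1}\rangle\cdot v_\lambda^*$, so a $\fb$-map out of $V(\lambda)^\vee\otimes\C_\nu$ is exactly the datum of a vector $v\in V(\mu)_{\nu-\lambda}$ killed by each $e_i^{\lambda(\alpha_i^\vee)+1}$. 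No inductive lifting is needed. Your approach can be repaired---one can show directly that $\pi|_{\mathrm{HW}^\nu}$ is injective (using only that $v_\lambda$ is the sole $\fn^+$-invariant in $V(\lambda)$) and then argue separately that its image is $N$, observing that the obstruction to your induction at depth $(\lambda(\alpha_i^\vee)+1)\alpha_i$ is a nonzero multiple of $e_i^{\lambda(\alpha_i^\vee)+1}v$---but this is a different and more delicate argument than the one you sketched.
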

\begin{proof} Define an increasing  filtration $\{F_n(\lambda)\}_{n\geq 0}$ of $V(\lambda)$ by
$$F_n(\lambda) := \sum_{\beta\in Q^+: |\beta| \leq n}\, V(\lambda)_{\lambda-\beta} ,$$
where $Q^+ :=\oplus_i \Z_{\geq 0}\alpha_i$ , for any $\beta =\sum n_i\alpha_i$, we define $|\beta| := \sum n_i$
and $V(\lambda)_\gamma$ denotes the $\gamma$-weight space. 
Then, clearly, each $F_n(\lambda)$ is $\mf[b]$-stable. We also set
$$V(\lambda)^\vee := \oplus_{\beta\in Q^+}\, \left(V(\lambda)_{\lambda-\beta}\right)^*.$$
Let $\mathbb{C}_\nu$ denote the one-dimensional $\mf[b]$-module with the Cartan subalgebra $\mf[h]$ acting via the weight $\nu$ and of course the commutator $\mf[n] := [\mf[b], \mf[b]]$ acts trivially. 
Now, 
\begin{align} \label{eqnnew} \Hom_{\mf[g]}\left(V(\nu), V(\lambda)\otimes V(\mu)\right)&\simeq  \Hom_{\mf[b]}\left(\mathbb{C}_\nu, V(\lambda)\otimes V(\mu)\right)\notag\\
&\simeq \varinjlim_n\, \Hom_{\mf[b]}\left(\mathbb{C}_\nu, F_n(\lambda)\otimes V(\mu)\right)\notag\\
&\simeq \varinjlim_n\, \Hom_{\mf[b]}\left(F_n(\lambda)^*\otimes \mathbb{C}_\nu,  V(\mu)\right),\notag\\
&\,\,\,\,\,\,\,\,\,\text{since $F_n(\lambda)$ is finite-dimensional}\notag\\
&\simeq  \Hom_{\mf[b]}\left(V(\lambda)^\vee\otimes \mathbb{C}_\nu,  V(\mu)\right),
\end{align}
where the last isomorphism  is induced from the surjection $V(\lambda)^\vee \to F_n(\lambda)^*$ obtained from dualizing the embedding $F_n(\lambda) \hookrightarrow V(\lambda)$. It is indeed an isomorphism since 
any $\mf[b]$-module homomorphism $f: V(\lambda)^\vee\otimes \mathbb{C}_\nu \to  V(\mu)$ clearly descends to a 
homomorphism $\bar{f}:  F_N(\lambda)^*\otimes \mathbb{C}_\nu \to  V(\mu)$ for some large $N$ because the weights of $V(\lambda)^\vee\otimes \mathbb{C}_\nu$ lie in $-\lambda+\nu +Q^+$ whereas the weights of $V(\mu)$ lie in 
$\mu-Q^+$. 

From the definition of the integrable highest weight module (cf. \cite{Ku3}, Definition 2.1.5 and Corollary 2.2.6) (rather its analogue for the integrable lowest module $V(\lambda)^\vee$), we get that 
 $V(\lambda)^\vee$ is obtained from the Verma module for the negative Borel $M_{\mf[b]^-}(-\lambda)$ (with lowest weight $-\lambda$) by taking its quotient by  the submodule generated by $\{e_i^{\lambda(\alpha_i^\vee)+1}. v_{-\lambda}\}$, where $e_i$ runs over all the (positive) simple root vectors and $v_{-\lambda}$ is the lowest weight vector of  $M_{\mf[b]^-}(-\lambda)$. In particular, the map 
 $$U(\mf[n]) \to V(\lambda)^\vee,\,\,\, a \mapsto a\cdot v_{-\lambda},$$
 is surjective with kernel the left ideal generated by $\{e_i^{\lambda(\alpha_i^\vee)+1}\}_i$. 
 From this and the above isomorphism  \ref{eqnnew}, we get the proposition.
 \end{proof}

The above result also follows from the work of Kashiwara \cite{Kas1} \cite{Kas2} on the existence of  crystal base and global base for quantum group representations. 

We can deduce from Proposition \ref{mult} the following well-known fact, which we refer to as {\it additivity in the tensor decomposition}.  Following \cite{Ku1}, Corollary 1.5, we give a purely algebraic proof,  but remark that this can also be deduced from the Kac-Moody analogue of the Borel--Weil theorem.

\begin{Cor} \label{additive} Let $\lambda$, $\lambda'$, $\mu$, $\mu'$, $\nu \in \dom$. Then, $m_{\lambda, \mu}^\nu \leq m_{\lambda+\lambda', \mu+\mu'}^{\nu+\lambda'+\mu'}$.
\end{Cor}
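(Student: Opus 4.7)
The plan is to apply Proposition \ref{mult} to rewrite both multiplicities as dimensions of $e_i$-annihilator subspaces and then produce an explicit linear injection between them. Setting
$$M(\lambda,\mu,\nu) := \{v \in V(\mu)_{\nu-\lambda} : e_i^{\lambda(\alpha_i^\vee)+1}\cdot v = 0 \text{ for all simple roots } \alpha_i\},$$
Proposition \ref{mult} gives $m_{\lambda,\mu}^\nu = \dim M(\lambda,\mu,\nu)$ and analogously for the right-hand side. Since $(\nu+\lambda'+\mu')-(\lambda+\lambda') = \nu-\lambda+\mu'$, the target weight spaces match up, so it suffices to construct a linear injection $M(\lambda,\mu,\nu) \hookrightarrow M(\lambda+\lambda',\mu+\mu',\nu+\lambda'+\mu')$.

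The natural candidate is the weight-shifting map $\Psi \colon V(\mu) \to V(\mu+\mu')$ given by $v \mapsto \pi(v \otimes v_{\mu'})$, where $v_{\mu'}$ is a fixed highest weight vector of $V(\mu')$ and $\pi \colon V(\mu)\otimes V(\mu') \twoheadrightarrow V(\mu+\mu')$ is the projection onto the Cartan component (well-defined since integrable highest weight modules for symmetrizable Kac--Moody algebras decompose completely under tensor products). Checking that $\Psi$ sends $M(\lambda,\mu,\nu)$ into $M(\lambda+\lambda',\mu+\mu',\nu+\lambda'+\mu')$ is routine: the $\mathfrak{g}$-equivariance of $\pi$ together with $e_i\cdot v_{\mu'}=0$ collapses the coproduct expansion to $e_i^N \Psi(v) = \Psi(e_i^N v)$ for every $N\geq 0$, and then factoring $e_i^{(\lambda+\lambda')(\alpha_i^\vee)+1} = e_i^{\lambda'(\alpha_i^\vee)}\cdot e_i^{\lambda(\alpha_i^\vee)+1}$ does the job.

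The heart of the argument, and the step where I expect the main work, is proving that $\Psi$ is injective on all of $V(\mu)$. Since $\Psi$ shifts weights cleanly by $\mu'$, its kernel is a $Q^+$-graded subspace of $V(\mu)$. My plan is a minimal-weight argument: supposing $\ker\Psi \neq 0$, choose a nonzero $v \in \ker\Psi \cap V(\mu)_{\mu-\beta}$ with $\beta \in Q^+$ of minimal height. For any simple root $\alpha_i$, the identity $\Psi(e_i v) = e_i \Psi(v) = 0$ combined with the minimality of $\beta$ forces $e_i v = 0$; since this holds for every $i$, $v$ must be a highest weight vector of the irreducible $\mathfrak{g}$-module $V(\mu)$, hence $v \in \mathbb{C}\cdot v_\mu$. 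But $\Psi(v_\mu)$ is by construction the highest weight vector $v_{\mu+\mu'} \neq 0$, forcing $v=0$, a contradiction. The resulting injection $\Psi|_{M(\lambda,\mu,\nu)}$ then yields the claimed multiplicity inequality.
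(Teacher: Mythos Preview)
Your proposal is correct and follows essentially the same approach as the paper: both define the same map $\Psi$ (the paper calls it $\xi$), invoke Proposition \ref{mult}, and prove injectivity by reducing to the highest weight vector. The only cosmetic differences are that the paper phrases injectivity as ``$v_\mu \otimes v_{\mu'} \in U(\mathfrak{g})\cdot(v\otimes v_{\mu'})$'' (which unwinds to your minimal-height argument once one notes $U(\mathfrak{n}^+)$ acts through the first factor), and that the paper does the inequality in two steps ($\mu\mapsto\mu+\mu'$, then $\lambda\mapsto\lambda+\lambda'$) rather than handling $\lambda'$ directly via your factorization $e_i^{(\lambda+\lambda')(\alpha_i^\vee)+1}=e_i^{\lambda'(\alpha_i^\vee)}e_i^{\lambda(\alpha_i^\vee)+1}$.
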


\begin{proof}
Define a map $\xi: V(\mu) \to V(\mu+\mu')$ via $\xi(v) = \pi( v \otimes v_{\mu'})$, where $v_{\mu'}$ is a nonzero highest weight vector of $V(\mu')$ and $\pi: V(\mu) \otimes V(\mu') \to V(\mu+\mu')$ is the projection onto the Cartan component. For any nonzero $v \in V(\mu)$,  $v_\mu \otimes v_{\mu'} \in U(\mf[g]).(v \otimes v_{\mu'})$, so the map $\xi$ is clearly injective. Now,  by Proposition \ref{mult}, choose a vector $v \in V(\mu)_{\nu-\lambda}$ satisfying $e_i^{\lambda(\alpha_i^\vee)+1}.v=0$ for all the simple roots $\alpha_i$. Then, $\xi(v) \in V(\mu+\mu')_{\nu+\mu'-\lambda}$ and satisfies $e_i^{\lambda(\alpha_i^\vee)+1}.(\xi(v))=0$. Thus, by Proposition \ref{mult}, we get  $m_{\lambda, \mu}^\nu \leq m_{\lambda, \mu+\mu'}^{\nu+\mu'}$. Repeating the argument a second time for $\lambda$ and $\lambda'$ gives the result. 
\end{proof}

\section{Existence of root components for imaginary roots}

Now, let $\beta=k\delta$ be a positive imaginary root. By the regularity condition (P2) for root components as in the Introduction, if $(\lambda, \mu, \beta)$ is a Wahl triple, then necessarily $\lambda$ and $\mu$ are both regular dominant weights. Combining Example \ref{imag} and additivity in the tensor decomposition (Corollary \ref{additive}), we get the first example of root components via the following corollary. 

\begin{Cor} \label{ImRootComp}
Let $\beta = k\delta $ be a positive imaginary root, and $(\lambda, \mu, \beta)$ a Wahl triple. Then, $V(\lambda+\mu-\beta) \subset V(\lambda) \otimes V(\mu)$. 
\end{Cor}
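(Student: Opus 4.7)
The plan is to reduce the general case to the special case $\lambda = \mu = \rho$ treated in Example \ref{imag}, using the additivity principle of Corollary \ref{additive}. The first step is to unpack what the Wahl triple condition (P2) forces when $\beta = k\delta$. For any simple root $\alpha_i$ with $i \geq 1$, the element $k\delta - \alpha_i$ is a real root (it has $\mathring{\mf[h]}^*$-component $-\alpha_i \in \mathring{\Phi}^-$), and for $i = 0$ we have $k\delta - \alpha_0 = (k-1)\delta + \theta$, which is a positive real root for $k \geq 1$. Hence $k\delta - \alpha_i \in \Phi$ for every simple root $\alpha_i$, so (P2) cannot hold unless $\lambda(\alpha_i^\vee) > 0$ and $\mu(\alpha_i^\vee) > 0$ for all $i$. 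In other words, the hypothesis of the corollary automatically forces both $\lambda$ and $\mu$ to be regular dominant weights.

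Given this, I would write $\lambda = \rho + \lambda'$ and $\mu = \rho + \mu'$, where $\lambda' := \lambda - \rho$ and $\mu' := \mu - \rho$ lie in $\dom$ by regularity. Example \ref{imag} provides the crucial input, namely
$$m_{\rho,\rho}^{2\rho - k\delta} \geq 1 \quad \text{for all } k \geq 0.$$
Applying Corollary \ref{additive} with the shift $\lambda'$ gives $m_{\rho, \rho}^{2\rho - k\delta} \leq m_{\rho + \lambda', \rho}^{2\rho + \lambda' - k\delta}$, and applying it a second time with the shift $\mu'$ yields
$$m_{\rho, \rho}^{2\rho - k\delta} \leq m_{\lambda, \mu}^{\lambda + \mu - k\delta}.$$
Thus $V(\lambda + \mu - k\delta) \subset V(\lambda) \otimes V(\mu)$, which is exactly the desired inclusion.

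No serious obstacle arises: condition (P1) is automatic because subtracting $k\delta$ does not change the pairings $(\lambda + \mu - k\delta)(\alpha_i^\vee)$ (since $\delta(\alpha_i^\vee) = 0$), so $\lambda + \mu - k\delta$ remains dominant. The only point requiring any care is the observation that (P2) genuinely forces regularity, which reduces an a priori delicate combinatorial condition to the single hypothesis needed to apply Example \ref{imag}, after which additivity closes out the proof.
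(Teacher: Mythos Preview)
Your proposal is correct and follows essentially the same approach as the paper: observe that condition (P2) for $\beta = k\delta$ forces both $\lambda$ and $\mu$ to be regular dominant, then combine Example \ref{imag} with the additivity result of Corollary \ref{additive}. You have simply made explicit the verification that $k\delta - \alpha_i \in \Phi$ for every simple root $\alpha_i$, which the paper leaves as a remark preceding the corollary.
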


\begin{Rmk} {\rm In fact, the stronger statement that $V(\lambda+\mu-k\delta) \subset V(\lambda) \otimes V(\mu)$ is true if \textbf{at least one} of $\lambda$ or $\mu$ is regular dominant follows immediately from the same approach as in Example \ref{imag}. }
\end{Rmk}

\section{Existence of  root components  for $\beta = \gamma+k\delta$, for $\gamma \in \mathring{\Phi}^+$}

Fix for the remainder of this section a simple untwisted affine Lie algebra $\mf[g]$ and a Wahl triple $(\lambda, \mu, \beta) \in ({\dom})^2 \times \Phi^+_{Re}$. Without loss of generality, assume that $\lambda(d)=\mu(d)=0$. Let $l$, $m$ be the levels of $\lambda$ and $\mu$, respectively. Then, by Proposition \ref{GKO} (2), we know precisely how $L_0 \in Vir$ acts on $V(\lambda)\otimes V(\mu)$ via the GKO construction. The goal of this section is to understand how this action interacts with potential root components $V(\lambda+\mu-\beta)$. As a first step, we consider the Cartan component $V(\lambda+\mu) \subset V(\lambda) \otimes V(\mu)$ in the following lemma. 

\begin{Lem} \label{nonneglemma} For $\lambda, \mu \in \dom$ with levels $l$, $m$, respectively, we have 
$$
\frac{1}{2} \left( \frac{(\lambda | \lambda + 2\rho)}{l+h^\vee}+\frac{(\mu | \mu+2\rho)}{m+h^\vee} - \frac{(\lambda+\mu | \lambda+\mu + 2\rho)}{l+m+h^\vee} \right)  \geq 0.
$$
\end{Lem}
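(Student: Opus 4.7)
The plan is to recognize the given expression as the $L_0$-eigenvalue on the Cartan component $V(\lambda+\mu) \subset V(\lambda)\otimes V(\mu)$, and then to use unitarity of the Virasoro action from the GKO construction to force non-negativity.

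First I would invoke Proposition \ref{GKO}(2) together with Remark \ref{scalar}: since the Casimir $\Omega$ acts on $V(\lambda+\mu)$ by $(\lambda+\mu\mid \lambda+\mu+2\rho)$, the operator $L_0$ acts on every vector of $V(\lambda+\mu)$ as multiplication by the scalar displayed in the lemma. Next I would observe that $V(\lambda+\mu)$ is certainly a $\delta$-maximal component of $V(\lambda)\otimes V(\mu)$, because no weight of $V(\lambda)\otimes V(\mu)$ can exceed $\lambda+\mu$ in the usual partial order (so in particular $V(\lambda+\mu+k\delta)$ does not occur for $k>0$).

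Now take $v := v_\lambda \otimes v_\mu$, a nonzero highest weight vector of the Cartan component. As in the proof of Proposition \ref{L0}, $v$ is annihilated by $L_k$ for every $k>0$: indeed, $L_k$ shifts the $Vir$-weight by $-kh$, which by Lemma \ref{kshift} would land $v$ into a copy of $V(\lambda+\mu+k\delta)$, and no such component exists. Hence $v$ generates a highest weight $Vir$-submodule $U \subseteq W^{\lambda+\mu}$ which, by Proposition \ref{GKO}(1) together with the Corollary following it, is unitarizable with central charge $\geq 0$.

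Finally I would run the one-line Hermitian positivity computation already used in the proof of Lemma \ref{BKLem}: with $\Lambda(L_0)$ denoting the $L_0$-eigenvalue on $v$, we have
\begin{equation*}
0 \;\leq\; \langle L_{-1} v \mid L_{-1} v \rangle \;=\; \langle L_1 L_{-1} v \mid v \rangle \;=\; 2\Lambda(L_0)\, \langle v \mid v \rangle,
\end{equation*}
since the central term $\tfrac{1}{12}(k^3-k)$ vanishes at $k=1$. As $\langle v\mid v\rangle > 0$, this forces $\Lambda(L_0)\geq 0$, which is exactly the claimed inequality. No genuine obstacle is expected here; the only point requiring care is verifying that $v$ is a $Vir$-highest weight vector, which follows cleanly from the $\delta$-maximality of the Cartan component and Lemma \ref{kshift}.
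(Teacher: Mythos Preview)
Your proposal is correct and follows essentially the same approach as the paper's own proof: identify the expression as the $L_0$-eigenvalue on the highest weight vector $v_\lambda\otimes v_\mu$ of the $\delta$-maximal Cartan component, use Lemma \ref{kshift} to see that this vector is a $Vir$-highest weight vector, and then invoke unitarity (the computation from the proof of Lemma \ref{BKLem}) to conclude non-negativity. The only difference is that you spell out the $k=1$ Hermitian computation explicitly, whereas the paper simply cites the proof of Lemma \ref{BKLem}.
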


\begin{proof}
Let $v:=v_\lambda \otimes v_\mu \in V(\lambda) \otimes V(\mu)$, where $v_\lambda$, $v_\mu$ are nonzero highest weight vectors in $V(\lambda)$, $V(\mu)$, respectively. Then, $v$ is the highest weight vector of the Cartan component $V(\lambda+\mu) \subset V(\lambda) \otimes V(\mu)$. Now, consider the Virasoro action $Vir. v$ coming from the GKO construction. As $V(\lambda+ \mu)$ is a $\delta$-maximal component of $  V(\lambda) \otimes V(\mu)$, by Lemma \ref{kshift},   $L_k.v=0$ for all $k \geq 1$. So, consider the highest weight, unitarizable (irreducible) Virasoro submodule of $V(\lambda) \otimes V(\mu)$
generated by $v$. By Proposition \ref{GKO}, we know that the action of $L_0$ on $v$ is given precisely by the scalar value above. Further, by the proof of Lemma \ref{BKLem}, we know that this value must be nonnegative.

\end{proof}

Now, we can obtain from this the following key proposition pertaining to Wahl triples. 

\begin{Prop} \label{positive}
Let $(\lambda, \mu, \beta) \in (\dom)^2 \times \Phi^+_{Re}$ be a Wahl triple. Then, 
$$
\frac{1}{2} \left( \frac{(\lambda | \lambda + 2\rho)}{l+h^\vee}+\frac{(\mu | \mu+2\rho)}{m+h^\vee} - \frac{(\lambda+\mu -\beta | \lambda+\mu -\beta + 2\rho)}{l+m+h^\vee} \right) > 0.
$$
\end{Prop}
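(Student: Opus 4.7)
The plan is to bootstrap from Lemma \ref{nonneglemma} (the Cartan case $\beta=0$) by analyzing only the difference caused by subtracting $\beta$ from $\lambda+\mu$ in the third term. Write the quantity in question as $A+B-C_\beta$ where $A=(\lambda|\lambda+2\rho)/(l+h^\vee)$, $B=(\mu|\mu+2\rho)/(m+h^\vee)$, and $C_\nu=(\nu|\nu+2\rho)/(l+m+h^\vee)$ for $\nu=\lambda+\mu-\beta$. Since Lemma \ref{nonneglemma} already gives $A+B-C_{\lambda+\mu}\ge 0$, it is enough to show $C_{\lambda+\mu}-C_\beta>0$, because adding a nonnegative quantity to a positive one preserves strict positivity.

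Next I would carry out the direct algebraic expansion
\begin{align*}
(\lambda+\mu|\lambda+\mu+2\rho)-(\lambda+\mu-\beta|\lambda+\mu-\beta+2\rho)
&= 2(\lambda+\mu+\rho|\beta)-(\beta|\beta),
\end{align*}
and convert this, using $2(\rho|\beta)=(\beta|\beta)\rho(\beta^\vee)$ and the analogous formula for $\lambda,\mu$ (valid because $\beta$ is a real root, so $\beta^\vee$ exists), into
$$C_{\lambda+\mu}-C_\beta = \frac{(\beta|\beta)}{l+m+h^\vee}\bigl(\lambda(\beta^\vee)+\mu(\beta^\vee)+\rho(\beta^\vee)-1\bigr).$$
Since $\beta$ is a positive real root we have $(\beta|\beta)>0$ and $l+m+h^\vee>0$, so the sign is governed by the bracketed factor.

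It remains to show that $\lambda(\beta^\vee)+\mu(\beta^\vee)+\rho(\beta^\vee)>1$. Since $\lambda,\mu\in\dom$ and $\beta\in\Phi^+$, the first two terms are nonnegative, so I would reduce to the known inequality $\rho(\beta^\vee)\ge 1$ for every positive real root of an affine Kac--Moody algebra, with equality precisely when $\beta$ is simple. (For $\beta\in\mathring{\Phi}^+$ this is $\rho(\beta^\vee)=\mathrm{ht}(\beta^\vee)\ge 1$; for $\beta=\gamma+k\delta$ with $k\ge 1$ one uses $\rho(\beta^\vee)=\rho(\gamma^\vee)+2kh^\vee/(\gamma|\gamma)$ together with $\rho(\theta^\vee)=h^\vee-1$ to verify the inequality in each subcase, and to identify the equality case as $\beta=\alpha_0=\delta-\theta$.) For non-simple $\beta$ the strict inequality $\rho(\beta^\vee)\ge 2$ is already sufficient.

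The last step is to invoke the Wahl-triple hypothesis (P2) exactly in the borderline case. If $\beta=\alpha_i$ is simple, then $\beta-\alpha_i=0\in\Phi\sqcup\{0\}$, so (P2) forces $\lambda(\alpha_i^\vee)\neq 0$ and $\mu(\alpha_i^\vee)\neq 0$, i.e.\ both are $\ge 1$; combined with $\rho(\alpha_i^\vee)=1$ this yields $\lambda(\beta^\vee)+\mu(\beta^\vee)+\rho(\beta^\vee)\ge 3>1$. I expect the main (minor) obstacle to be verifying the inequality $\rho(\beta^\vee)\ge 1$ uniformly across real roots of all affine types, particularly handling the non-simply-laced subcase $\beta=\gamma+k\delta$ with $\gamma$ short and negative; but this is a finite check using the explicit description of $\beta^\vee$ and the normalization $(\theta|\theta)=2$.
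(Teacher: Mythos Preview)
Your proof is correct and follows the same overall strategy as the paper: both reduce to Lemma \ref{nonneglemma} by isolating the contribution of $\beta$, arriving at the same quantity $\frac{1}{l+m+h^\vee}\bigl(2(\lambda+\mu|\beta)+2(\rho|\beta)-(\beta|\beta)\bigr)$ (your rewriting via $\beta^\vee$ is just a rescaling of this). The only genuine difference is in the final step securing \emph{strict} positivity. The paper invokes (P1): since $\lambda+\mu-\beta\in\dom$, one has $(\lambda+\mu-\beta|\beta)\ge 0$, hence $(\lambda+\mu|\beta)\ge(\beta|\beta)>0$, and this single inequality finishes the argument without any case analysis. You instead invoke (P2), splitting into the cases $\beta$ simple (where (P2) forces $\lambda(\alpha_i^\vee),\mu(\alpha_i^\vee)\ge 1$) and $\beta$ non-simple (where $\rho(\beta^\vee)\ge 2$). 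Both routes are valid; the paper's is shorter and avoids the case distinction, while yours makes no use of (P1). Incidentally, your worry about verifying $\rho(\beta^\vee)\ge 1$ across affine types is unnecessary: for any positive real root $\beta$, the coroot $\beta^\vee$ lies in $\sum_i\Z_{\ge 0}\alpha_i^\vee$ with some coefficient positive, so $\rho(\beta^\vee)\in\Z_{\ge 1}$ immediately, with equality precisely when $\beta^\vee$ (equivalently $\beta$) is simple.
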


\begin{proof}
We use the bilinearity of the form $(\cdot | \cdot)$ to expand the third term as 
$$
\frac{(\lambda+\mu -\beta | \lambda+\mu -\beta + 2\rho)}{l+m+h^\vee} = \frac{(\lambda+\mu | \lambda+\mu+2\rho)}{l+m+h^\vee} - \frac{2(\lambda+\mu | \beta) + 2(\rho | \beta) - (\beta | \beta)}{l+m+h^\vee}.
$$
In total, we therefore have  
$$
\begin{aligned} 
&\frac{1}{2} \left( \frac{(\lambda | \lambda + 2\rho)}{l+h^\vee}+\frac{(\mu | \mu+2\rho)}{m+h^\vee} - \frac{(\lambda+\mu -\beta | \lambda+\mu -\beta + 2\rho)}{l+m+h^\vee} \right) \\
&= \frac{1}{2} \left( \frac{(\lambda | \lambda + 2\rho)}{l+h^\vee}+\frac{(\mu | \mu+2\rho)}{m+h^\vee} - \frac{(\lambda+\mu | \lambda+\mu + 2\rho)}{l+m+h^\vee} \right) +\\
&\,\,\,\,\,\,\, \frac{1}{2} \left(\frac{2(\lambda+\mu | \beta) + 2(\rho | \beta) - (\beta | \beta)}{l+m+h^\vee} \right).
\end{aligned}
$$
By Lemma \ref{nonneglemma}, we know that the first term is nonnegative. Further, we know that $2(\rho | \beta) - (\beta | \beta) \geq 0$, as $1 \leq \rho(\beta^\vee) = \frac{2(\rho|\beta)}{(\beta|\beta)}$. Finally, by the property (P1) of the Wahl triple,  we have  $(\lambda+\mu | \beta) >0$ (since $\lambda+\mu-\beta\in \dom$, we get $(\lambda+\mu-\beta | \beta)\geq 0$ and hence  $(\lambda+\mu | \beta) >0$), so that in total the expression as a whole is strictly positive. 
\end{proof}

As a  consequence of Propositions \ref{positive} and \ref{L0}, 
 we get the following corollary. 

\begin{Cor} \label{redux}
Let $(\lambda, \mu, \beta)\in (\dom)^2 \times \Phi^+_{Re}$ be a Wahl triple and let
$V(\lambda+\mu-\beta) \subset V(\lambda) \otimes V(\mu)$ be a $\delta$-maximal root component. In particular, $\lambda, \mu$ have strictly positive levels. Then, for any $k \geq 0$, we have $V(\lambda+\mu-\beta-k\delta) \subset V(\lambda) \otimes V(\mu).$
\end{Cor}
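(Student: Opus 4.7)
The plan is to read Corollary \ref{redux} off directly from Proposition \ref{positive} combined with Proposition \ref{L0}. The key observation is that, by Remark \ref{scalar} together with Proposition \ref{GKO}(2), the scalar whose strict positivity is asserted in Proposition \ref{positive} --- namely
$$\frac{1}{2} \left( \frac{(\lambda | \lambda + 2\rho)}{l+h^\vee} + \frac{(\mu | \mu + 2\rho)}{m+h^\vee} - \frac{(\lambda+\mu-\beta | \lambda+\mu-\beta + 2\rho)}{l+m+h^\vee} \right)$$
--- is precisely the eigenvalue by which the GKO operator $L_0 \in Vir$ acts on the $\delta$-maximal component $V(\lambda+\mu-\beta) \subset V(\lambda) \otimes V(\mu)$.

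Before invoking these propositions, I would first justify the parenthetical claim that $\lambda$ and $\mu$ have strictly positive levels, so that the GKO denominators $l+h^\vee$, $m+h^\vee$, $l+m+h^\vee$ are positive. Under the standing normalization $\lambda(d) = \mu(d) = 0$, a dominant weight of level zero must vanish: writing $\lambda = \lambda_0 \in \mathring{\mathfrak{h}}^*$, dominance forces $\lambda_0(\alpha_i^\vee) \geq 0$ for $i = 1, \dots, \ell$ as well as $\lambda_0(\theta^\vee) \leq 0$ (from $\alpha_0^\vee = K - \theta^\vee$ and $\lambda(K) = 0$), whence $\lambda_0 = 0$. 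But if $\lambda = 0$ (or symmetrically $\mu = 0$), condition (P2) would require $\beta - \alpha_i \notin \Phi \sqcup \{0\}$ for every simple root $\alpha_i$, which fails for every real positive root $\beta$: either $\beta = \alpha_i$ is simple (so $\beta - \alpha_i = 0$), or else some simple root can be subtracted from $\beta$ to yield another positive real root.

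With $l, m > 0$ in hand, the conclusion is essentially a one-line invocation: by Proposition \ref{positive} the displayed scalar is strictly positive, and by hypothesis $V(\lambda+\mu-\beta)$ is $\delta$-maximal in $V(\lambda) \otimes V(\mu)$; therefore Proposition \ref{L0}(1) applies verbatim and gives $m_{\lambda, \mu}^{\lambda+\mu-\beta-k\delta} \geq 1$ for every $k \geq 0$, which is exactly the desired containment. I expect essentially no obstacle here, since the analytical heavy lifting has already been carried out in Proposition \ref{positive} (combining the nonnegativity of the Cartan term from Lemma \ref{nonneglemma} with the strictly positive contribution forced by the Wahl triple conditions (P1) and (P2)), and Proposition \ref{L0} has already converted positivity of the $L_0$-eigenvalue into nonvanishing of the multiplicities $m_k$ via the Virasoro input of Lemmas \ref{kshift} and \ref{BKLem}. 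The only minor subtlety worth flagging is the positivity-of-levels observation above.
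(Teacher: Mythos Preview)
Your proposal is correct and follows essentially the same approach as the paper: combine Proposition \ref{positive} (via Proposition \ref{GKO}(2) and Remark \ref{scalar}) with Proposition \ref{L0}(1). The only addition is your explicit justification of the parenthetical claim that $\lambda,\mu$ have positive level, which the paper states without proof; your argument for this is sound.
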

\begin{proof} By Propositions \ref{positive}, \ref{GKO} and Remark \ref{scalar}, $L_0$ acts on $V(\lambda +\mu-\beta)$ via a scalar $>0$. Thus, by Proposition \ref{L0}, 
$$ V(\lambda +\mu-\beta -k \delta) \subset V(\lambda) \otimes V(\mu),\,\,\,\text{for all $k\geq 0$}.$$
This proves the corollary.
\end{proof} 

Note that while the above corollary allows us to conclude the existence of infinitely many root components from the existence of its associated $\delta$-maximal component, the action of the Virasoro does not \textit{a priori} say anything about the appearance of the $\delta$-maximal root components in the tensor product decomposition. Nevertheless, to conclude the proof of Theorem I of Introduction, our work is reduced by Corollary \ref{redux} to showing the existence of these $\delta$-maximal root components. 

Recall that, for any $\beta \in \Phi^+_{Re}$, we can write $\beta=\gamma+k\delta$ for some $\gamma \in \mathring{\Phi}$ and $k \in \Z_{\geq 0}$. This can further be separated into two cases: If $\gamma \in \mathring{\Phi}^+$, then we can take any $k \geq 0$. If $\gamma \in \mathring{\Phi}^-$, then necessarily $k \geq 1$. Therefore, when considering the $\delta$-maximal root components $V(\lambda+\mu-\beta)$, it suffices to consider the two possibilities: $V(\lambda+\mu-\gamma)$ and $V(\lambda+\mu-(-\gamma+\delta))$, where $\gamma \in \mathring{\Phi}^+$. The first of these cases follows immediately from Kumar's result in the finite case \cite{Ku1}.

\begin{Prop} \label{finite} Let $(\lambda, \mu, \gamma)$ be a Wahl triple with $\gamma \in \mathring{\Phi}^+$. Then, $m_{\lambda, \mu}^{\lambda+\mu-\gamma} \neq 0$.
\end{Prop}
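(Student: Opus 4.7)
The plan is to reduce to Kumar's theorem \cite{Ku1} for the underlying simple Lie algebra $\mathring{\g}$ and then invoke Proposition \ref{mult}, using the $\mathring{\g}$-submodule $U(\mathring{\g}).v_\mu \subset V(\mu)$ as the bridge between the two settings.

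Since $\gamma$ is a root of $\mathring{\g}$, one has $\gamma(K) = \gamma(d) = 0$, so (P1) restricts to the statement that $\mathring{\lambda} + \mathring{\mu} - \gamma$ is dominant integral for $\mathring{\g}$, where $\mathring{\lambda} := \lambda|_{\mathring{\h}}$ and $\mathring{\mu} := \mu|_{\mathring{\h}}$. The condition (P2) at indices $i = 1, \ldots, \ell$ translates verbatim into the finite Wahl condition, since an expression of the form $\gamma - \alpha_i$ with $\gamma, \alpha_i \in \mathring{\Phi}$ lies in $\mathring{\h}^*$ and is a root of $\g$ iff it is a root of $\mathring{\g}$. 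Hence $(\mathring{\lambda}, \mathring{\mu}, \gamma)$ is a Wahl triple for $\mathring{\g}$, and Kumar's theorem together with Kostant's formula (Proposition \ref{mult} applied to $\mathring{\g}$) produces a vector $v \in \mathring{V}(\mathring{\mu})_{\mathring{\mu}-\gamma}$ with $e_i^{\lambda(\alpha_i^\vee)+1}. v = 0$ for every $i = 1, \ldots, \ell$.

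Next, I would view $v$ inside $V(\mu)$ via the $\mathring{\g}$-submodule $U(\mathring{\g}).v_\mu \subset V(\mu)$. This is a highest weight $\mathring{\g}$-module of weight $\mathring{\mu}$ that inherits the integrability relations $f_i^{\mathring{\mu}(\alpha_i^\vee)+1}.v_\mu = 0$ (for $i=1, \ldots, \ell$) from the integrable $\g$-module $V(\mu)$; it is therefore isomorphic to $\mathring{V}(\mathring{\mu})$. Under this embedding, $v$ sits in $V(\mu)_{\mu-\gamma} = V(\mu)_{(\lambda+\mu-\gamma)-\lambda}$, the precise weight space appearing in Proposition \ref{mult}.

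Finally, the Serre-type relation at the affine simple root $\alpha_0 = \delta - \theta$ comes for free. For any $n > 0$, every element of $\mathring{\g} \otimes t^n$ is a positive root vector of $\g$ (its root is $\alpha + n\delta$ for some $\alpha \in \mathring{\Phi} \cup \{0\}$, $n \geq 1$), so it annihilates $v_\mu$; a short induction using $[x \otimes t^n, y \otimes 1] = [x,y] \otimes t^n$ then shows $(x \otimes t^n) \cdot U(\mathring{\g}).v_\mu = 0$. In particular, $e_0 = f_\theta \otimes t$ kills $v$ and the relation $e_0^{\lambda(\alpha_0^\vee)+1}.v = 0$ holds trivially, so Proposition \ref{mult} gives $m_{\lambda, \mu}^{\lambda+\mu-\gamma} \geq 1$. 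No serious obstacle arises; the entire content is packaged in the observation that the affine $i = 0$ Serre relation is automatic on the finite submodule $\mathring{V}(\mathring{\mu}) \subset V(\mu)$.
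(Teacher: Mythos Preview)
Your proof is correct and follows essentially the same strategy as the paper: both reduce to Kumar's finite-type result \cite{Ku1} via the $\mathring{\g}$-submodule generated by the highest weight vector, and then handle the affine simple root $\alpha_0$ separately. The only cosmetic differences are that the paper phrases things directly in terms of a $\mathring{\g}$-highest weight vector $v_\gamma \in \mathring{V}(\lambda)\otimes\mathring{V}(\mu) \subset V(\lambda)\otimes V(\mu)$ rather than through the Kostant-type formula of Proposition~\ref{mult}, and that the paper dispatches $e_0.v_\gamma=0$ by a one-line $d$-grading argument (under the normalization $\lambda(d)=\mu(d)=0$, any weight appearing in $V(\lambda)\otimes V(\mu)$ has $d$-value $\le 0$, whereas $e_0.v_\gamma$ would have $d$-value $1$) instead of your commutator induction.
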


\begin{proof}
 Let $\mathring{V}(\lambda) \subset V(\lambda)$ denote the $\mathring{\mf[g]}$-submodule generated by the highest weight vector, and similarly for $\mathring{V}(\mu)$. Then, by \cite{Ku1}, Theorem 1.1, we have 
$$
\mathring{V}(\lambda+\mu-\gamma) \subset \mathring{V}(\lambda) \otimes \mathring{V}(\mu).
$$

Denote by $v_\gamma \in V(\lambda) \otimes V(\mu)$ the highest weight vector for $\mathring{\mf[g]}$ that generates $\mathring{V}(\lambda+\mu-\gamma)$. We claim that this is in fact a highest weight vector for $\mf[g]$. Indeed, all that remains to check is that $e_0.v_\gamma=0$. By the assumption that $\lambda(d)=\mu(d)=0$, we have that no vector in $V(\lambda) \otimes V(\mu)$ can have weight $\nu$ with $\nu(d) \geq 1$. However, since $\gamma \in \mathring{\Phi}^+$, if $e_0.v_\gamma \neq 0$, we have $(\lambda+\mu-\gamma+\alpha_0)(d)=\alpha_0(d) =1$, a contradiction. Thus,  $e_0.v_\gamma=0$ and generates a $\mf[g]$-submodule $V(\lambda+\mu-\gamma) \subset V(\lambda) \otimes V(\mu)$.

\end{proof}

From this, via Corollary \ref{redux},  we have the following corollary. 

\begin{Cor} \label{positivemaximal}
Let $(\lambda, \mu, \beta)$ be a Wahl triple with $\beta=\gamma+k\delta$ for some $\gamma \in \mathring{\Phi}^+$ and $k \geq 0$. Then, $V(\lambda+\mu-\beta) \subset V(\lambda) \otimes V(\mu)$.
\end{Cor}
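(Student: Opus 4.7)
The plan is to combine Proposition \ref{finite} with Corollary \ref{redux}, adding a brief weight-lattice check in between. First I would observe that $(\lambda, \mu, \gamma)$ is itself a Wahl triple whenever $(\lambda, \mu, \gamma + k\delta)$ is: (P1) transfers trivially because $\delta$ vanishes on every simple coroot, and (P2) transfers because for the finite simple roots $\alpha_i$ with $i \geq 1$ the element $\gamma - \alpha_i$ lies in $\Phi \sqcup \{0\}$ exactly when $\gamma + k\delta - \alpha_i$ does, using $\Phi_{Re} = \{\eta + m\delta : \eta \in \mathring{\Phi},\, m \in \Z\}$; for $\alpha_0 = \delta - \theta$, both $\gamma - \alpha_0$ and $\gamma + k\delta - \alpha_0$ automatically lie outside $\Phi \sqcup \{0\}$ since $\theta$ is the highest root of $\mathring{\mf[g]}$ and so $\gamma + \theta$ is never a root.

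Next, I would apply Proposition \ref{finite} to $(\lambda, \mu, \gamma)$ to obtain $V(\lambda + \mu - \gamma) \subset V(\lambda) \otimes V(\mu)$. The crucial secondary step is to verify that this component is already $\delta$-maximal, so that Corollary \ref{redux} applies to it. Any component $V(\nu) \subset V(\lambda) \otimes V(\mu)$ forces $\lambda + \mu - \nu \in Q^+ = \bigoplus_{i=0}^\ell \Z_{\geq 0}\alpha_i$, whereas for $\nu = \lambda + \mu - \gamma + j\delta$ with $j \geq 1$ the element $\gamma - j\delta$ has $\alpha_0$-coefficient equal to $-j < 0$ (using $\delta = \alpha_0 + \theta$ and the fact that $\gamma$ is a positive root of $\mathring{\mf[g]}$ and hence involves only the finite simple roots). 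Thus $\gamma - j\delta \notin Q^+$, and no such larger component can appear in the tensor product.

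Finally, Corollary \ref{redux} applied to the Wahl triple $(\lambda, \mu, \gamma)$ and the $\delta$-maximal root component $V(\lambda + \mu - \gamma)$ yields $V(\lambda + \mu - \gamma - k\delta) \subset V(\lambda) \otimes V(\mu)$ for every $k \geq 0$, which is exactly the desired statement since $\beta = \gamma + k\delta$. I do not anticipate a real obstacle here: the heavy lifting has been done in Proposition \ref{finite} (Kumar's finite-type existence theorem from \cite{Ku1}) and in Corollary \ref{redux} (which uses the Virasoro action from the GKO construction to shift along $\delta$). The present corollary merely transports the finite-type root component across the imaginary direction, and the $\delta$-maximality check is a routine $Q^+$-positivity argument.
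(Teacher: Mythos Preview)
Your proposal is correct and follows exactly the route the paper takes: apply Proposition \ref{finite} to the triple $(\lambda,\mu,\gamma)$ and then invoke Corollary \ref{redux}. The paper compresses this into a single sentence (``From this, via Corollary \ref{redux}''), whereas you spell out the two implicit checks---that $(\lambda,\mu,\gamma)$ is again a Wahl triple and that $V(\lambda+\mu-\gamma)$ is $\delta$-maximal---both of which are routine and correctly argued.
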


\section{Existence of  root components  for $\beta = \gamma+k\delta$, for $\gamma \in \mathring{\Phi}^-$}

We now  show the existence of the  $\delta$-maximal root components of the form $V(\lambda+\mu-(-\gamma+\delta))$ for $\gamma \in  \mathring{\Phi}^+$. The results and arguments of this section parallel those in \cite{Ku1}; we reproduce some  of the original arguments therein here for completeness.

Recall from Proposition \ref{mult} that 
$$
m_{\lambda, \mu}^{\lambda+\mu-\beta} = \text{dim}\{v \in V(\mu)_{\mu-\beta}: e_i^{\lambda(\alpha_i^\vee)+1}.v=0 \ \text{for all the simple roots } \alpha_i\}.
$$
Our goal is to explicitly construct such a vector $v \in V(\mu)_{\mu-\beta}$ for $(\lambda, \mu, \beta) \in (\dom)^2 \times \Phi^+_{Re}$ a Wahl triple. To do so, we begin with the following preparatory lemma.

\begin{Lem} \label{root} Let $\beta \in \Phi^+_{Re}$, with $\beta=\gamma+k\delta$ for some $\gamma \in \mathring{\Phi}$ and $k \in \Z_{\geq 0}$, and suppose $\beta-2\alpha_i$ is a root for some $0\leq i \leq \ell$. Then, neither of $\beta-2\alpha_j$ or $\beta-2\alpha_j-\alpha_i$ is a root or zero for $j \neq i$.

\end{Lem}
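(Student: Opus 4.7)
The plan is to reduce the affine statement to a finite-dimensional analog by writing $\beta = \gamma + k\delta$ and using $\alpha_0 = \delta - \theta$, then performing a case analysis on whether $i$ and $j$ equal $0$ or lie in $\{1, \ldots, \ell\}$. A combination $\eta + n\delta$ (with $\eta \in \mathring{\mf[h]}^*$) lies in $\Phi$ iff either $\eta \in \mathring{\Phi}$, or $\eta = 0$ and $n \neq 0$. Applied to the hypothesis $\beta - 2\alpha_i \in \Phi$: for $i \geq 1$ one gets $\gamma - 2\alpha_i \in \mathring{\Phi}$ (since $2\alpha_i$ is not a root); for $i = 0$ one gets $\gamma + 2\theta \in \mathring{\Phi}$ (since $-2\theta$ is not a root). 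The second alternative is extremely rigid: writing $\gamma = \sum a_j \alpha_j$ and using the standard bound $|a_j| \leq \theta_j$ for any root $\sum a_j \alpha_j \in \mathring{\Phi}$ (where $\theta = \sum \theta_j \alpha_j$ is the highest root), the condition $\gamma + 2\theta \in \mathring{\Phi}$ forces $-3\theta_j \leq a_j \leq -\theta_j$, which combined with $a_j \geq -\theta_j$ yields $a_j = -\theta_j$ for all $j$, i.e., $\gamma = -\theta$ and $\gamma + 2\theta = \theta$.

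I would then split the conclusion into three cases. When $i, j \geq 1$, both $\beta - 2\alpha_j = (\gamma - 2\alpha_j) + k\delta$ and $\beta - 2\alpha_j - \alpha_i = (\gamma - 2\alpha_j - \alpha_i) + k\delta$ lie in $\Phi \cup \{0\}$ iff their finite parts lie in $\mathring{\Phi} \cup \{0\}$, so the claim reduces directly to the finite-dimensional analog of this lemma for $\mathring{\mf[g]}$, established in Kumar's earlier paper \cite{Ku1}. When $i = 0$ and $j \geq 1$, the first step forces $\gamma = -\theta$; then $\gamma - 2\alpha_j = -\theta - 2\alpha_j$ violates the coefficient bound and is not in $\mathring{\Phi}$, while $\beta - 2\alpha_j - \alpha_0 = -2\alpha_j + (k-1)\delta$ has finite part $-2\alpha_j$, which is neither zero nor a root of the reduced system. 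When $i \geq 1$ and $j = 0$, we have $\gamma - 2\alpha_i \in \mathring{\Phi}$ (hence $\gamma \neq -\theta$), and we must rule out $\gamma + 2\theta \in \mathring{\Phi}$ and $\gamma + 2\theta - \alpha_i \in \mathring{\Phi}$. The first would force $\gamma = -\theta$ by Step 1, contradicting the hypothesis. The second, via the same coefficient bound applied to $\gamma - \alpha_i$, forces $\gamma \in \{-\theta, -\theta + \alpha_i\}$ (or $\gamma = \alpha_i$, in which case $\gamma + 2\theta - \alpha_i = 2\theta \notin \mathring{\Phi}$ anyway); the first two possibilities make $\gamma - 2\alpha_i \in \{-\theta - 2\alpha_i, -\theta - \alpha_i\}$ again violate the coefficient bound, contradicting the hypothesis. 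Throughout, the purely imaginary alternative (finite part $= 0$ and imaginary part $\neq 0$) is easily excluded since the relevant finite parts are never zero.

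The main obstacle is the rigidity statement of Step 1 that $\gamma + 2\theta \in \mathring{\Phi}$ forces $\gamma = -\theta$; everything downstream depends on this. The argument itself is a short application of the coefficient bound $|a_j| \leq \theta_j$, but it has to be invoked carefully across all affine types, including non-simply-laced ones, in each subcase involving the affine node $\alpha_0$. Once this rigidity is in hand, the remainder of the case analysis is mechanical, with the principal subcase (both $i, j \geq 1$) being a direct appeal to the corresponding finite-dimensional statement of \cite{Ku1}.
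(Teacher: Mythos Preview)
Your proposal is correct and follows essentially the same route as the paper's proof: both reduce the affine statement to finite root-system facts by writing $\beta=\gamma+k\delta$ and $\alpha_0=\delta-\theta$, both hinge on the rigidity ``$\gamma+2\theta\in\mathring{\Phi}\Rightarrow\gamma=-\theta$'', and both handle the three cases $(i,j\geq 1)$, $(i=0,j\geq 1)$, $(i\geq 1,j=0)$ in the same way. The only cosmetic differences are that you justify the rigidity via the explicit coefficient bound $|a_j|\leq\theta_j$ whereas the paper uses the partial order $\gamma-\alpha_i+2\theta\leq\theta$, and you cite \cite{Ku1} for the purely finite subcase whereas the paper cites Bourbaki directly (and notes separately that the $\gamma$ negative case follows similarly).
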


\begin{proof}
If $\beta \in \mathring{\Phi}^+$ (that is, if $k=0$), then the lemma holds  by explicit knowledge of the finite root systems; see \cite{Bou} for example. 
Else, first suppose $\beta-2\alpha_0$ is a root. Then, we have $\beta-2(\delta-\theta) = (\gamma+2\theta)+(k-2)\delta$ is a root, where $\theta \in \mathring{\Phi}^+$ is the highest root of the underlying semisimple Lie algebra. Thus,  $\beta-2\alpha_0$ is a root if and only if $\gamma+2\theta$ is a root in $\mathring{\Phi}$, so that $\gamma=-\theta$. In this case, neither of $\beta-2\alpha_j$ or $\beta-2\alpha_j-\alpha_0$ is a root for any $j \neq 0$, as the first would mean that $-\theta - 2\alpha_j \in \mathring{\Phi}$, and the second would mean that $-2\alpha_j \in \mathring{\Phi}$, a clear contradiction. Also, clearly, $\beta - 2 \alpha_j$ or  $\beta - 2 \alpha_j -\alpha_i$ can not be zero.

Now, suppose $\beta-2\alpha_i$ is a root for $i \neq 0$. Then, we have $(\gamma-2\alpha_i) + k\delta$ is a root, so that $\gamma - 2\alpha_i \in \mathring{\Phi}$. By the finite case, we get that $\beta-2\alpha_j$ and $\beta -2\alpha_j-\alpha_i$ are not roots or zero for $j \neq 0, i$, as this would have to hold for $\gamma$. (For $\gamma$ positive, we have seen this above and for $\gamma$ negative, we can similarly see using the explicit knowledge of the root system as in \cite{Bou}.)
As above, $\beta - 2\alpha_0 \in \Phi$ would imply that $\gamma=-\theta$, contradicting $\beta-2\alpha_i$ is a root. Finally, $\beta-2\alpha_0-\alpha_i \in \Phi$ would imply that $\gamma -\alpha_i + 2\theta \leq \theta$, so that $\gamma=-\theta$ or $\gamma=-\theta+\alpha_i$; in each of these cases, $\gamma-2\alpha_i$ is not a root, contradicting $\beta-2\alpha_i$ being a root.  Further, it is easy to see that $\beta - 2 \alpha_j -\alpha_i$ can not be zero for any $j\neq i$.
\end{proof}

We next introduce the following set of indices, which will play a role in the construction of sufficient Wahl triples associated to a positive root $\beta$.

\begin{Def} \label{defF} {\rm For $\beta \in \Phi^+$, define $F_\beta:=\{0\leq i \leq \ell: \beta-\alpha_i \not \in \Phi \sqcup \{0\}\}$.}
 \end{Def}

\begin{Prop} \label{oneroot} Let $(\lambda, \mu, \beta)$ be a Wahl triple with $\beta$ a real root, and suppose that $\beta-2\alpha_i \not \in \Phi^+$ for any $ 0\leq i \leq \ell$, or else $\mu(\beta^\vee)=1$. Then, $V(\lambda+\mu-\beta) \subset V(\lambda) \otimes V(\mu)$. 

Thus, for simply-laced $\mf[g]$ (i.e., $\mathring{\mf[g]}$ is simply-laced), for any Wahl triple, 
\begin{equation} \label{eqnwahl} V(\lambda+\mu-\beta) \subset V(\lambda) \otimes V(\mu).
\end{equation}
\end{Prop}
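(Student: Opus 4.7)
Following Kumar's finite-case strategy~\cite{Ku1}, the plan is to exhibit an explicit vector $v\in V(\mu)_{\mu-\beta}$ annihilated by every $e_i^{\lambda(\alpha_i^\vee)+1}$, so that Proposition~\ref{mult} supplies the inclusion. The natural candidate is $v:=f_\beta\, v_\mu$ for a nonzero $f_\beta\in\g_{-\beta}$. Since $\beta$ is a real root with $\beta(\beta^\vee)=2$, (P1) forces $\lambda(\beta^\vee)+\mu(\beta^\vee)\geq 2$; after possibly swapping $\lambda$ and $\mu$ (which preserves the Wahl-triple hypothesis and Case~A, and replaces Case~B by its symmetric counterpart $\lambda(\beta^\vee)=1$), I may assume $\mu(\beta^\vee)\geq 1$, so that $v\neq 0$. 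The computational engine is the identity
\[
e_i^n f_\beta\, v_\mu = \bigl(\ad(e_i)^n f_\beta\bigr)\cdot v_\mu,
\]
derived from the standard expansion $e_i^n f_\beta=\sum_{k=0}^n\binom{n}{k}\bigl(\ad(e_i)^k f_\beta\bigr)\,e_i^{n-k}$ in $U(\g)$ together with $e_i\, v_\mu=0$.

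If $\lambda(\alpha_i^\vee)=0$, then (P2) forces $\beta-\alpha_i\notin\Phi\sqcup\{0\}$, hence $\ad(e_i)f_\beta=0$ and $e_i\cdot v=0$. For $n:=\lambda(\alpha_i^\vee)\geq 1$, the task is to verify $\bigl(\ad(e_i)^{n+1}f_\beta\bigr)\cdot v_\mu=0$. In Case~A (when $\beta-2\alpha_i\notin\Phi^+$ for every $i$), contiguity of the $\alpha_i$-string through $\beta$ forces $\beta-k\alpha_i$ to be either a non-root or a negative root for every $k\geq 2$; correspondingly, $\ad(e_i)^{n+1}f_\beta$ is either zero or a positive root vector $e_{(n+1)\alpha_i-\beta}$, and in either case annihilates $v_\mu$ (the degenerate possibility $\beta=(n+1)\alpha_i$ is excluded for real $\beta$ with $n\geq 1$). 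In Case~B ($\mu(\beta^\vee)=1$), Lemma~\ref{root} guarantees at most one ``bad'' index $i_0$ with $\beta-2\alpha_{i_0}\in\Phi^+$; the Case~A reasoning handles all $i\neq i_0$, while for $i=i_0$ I will examine the $\alpha_{i_0}$-string through $\beta$ together with the decomposition $\beta^\vee=\sum_j c_j\alpha_j^\vee$ (with $c_j\in\Z_{\geq 0}$). The identity $\sum_j c_j\mu(\alpha_j^\vee)=1$, combined with (P2) applied to the indices in the support of $\beta^\vee$, pins $\mu$ down enough to force $\mu\bigl((\beta-(n+1)\alpha_{i_0})^\vee\bigr)=0$, so that $f_{\beta-(n+1)\alpha_{i_0}}\,v_\mu=0$, which is the sole residual obstruction to the vanishing. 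This Case~B cancellation, hinging on the interaction of $\mu(\beta^\vee)=1$, (P2), and the explicit $\alpha_{i_0}$-string, is the main technical obstacle.

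For the simply-laced consequence, the key observation is that in an ADE root system all Cartan integers lie in $\{-1,0,1\}$, so $\gamma-2\alpha_i\notin\mathring\Phi$ for every $\gamma\in\mathring\Phi$ and every finite simple root $\alpha_i$. The only way $\beta-2\alpha_i$ can then lie in $\Phi^+$ is with $i=0$ and $\beta=-\theta+k\delta$ for some $k\geq 2$. In this situation the $\delta$-minimal choice $\beta_0:=\alpha_0=\delta-\theta$ yields a Wahl triple $(\lambda,\mu,\beta_0)$: (P1) is invariant under $\delta$-shifts, (P2) at $i=0$ reads the same way for $\beta$ and $\beta_0$, and at $i\neq 0$ the simply-laced fact $\theta+\alpha_i\notin\mathring\Phi$ gives $\beta_0-\alpha_i\notin\Phi$. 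Case~A of the proposition then applies to $\beta_0$, producing $V(\lambda+\mu-\alpha_0)\subset V(\lambda)\otimes V(\mu)$. This component is $\delta$-maximal because $\alpha_0-j\delta\notin Q^+$ for any $j\geq 1$, so Corollary~\ref{redux} propagates the inclusion to all $V(\lambda+\mu-\beta)$ with $k\geq 2$; the remaining cases $\beta=\gamma+k\delta$ with $\gamma\in\mathring\Phi^+$ are already handled by Corollary~\ref{positivemaximal}.
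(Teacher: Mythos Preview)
Your Case~A and the simply-laced deduction are essentially the paper's argument (with one small omission: the ``any Wahl triple'' conclusion includes imaginary $\beta$, so you should also invoke Corollary~\ref{ImRootComp}).

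Case~B is where your proposal diverges from the paper, and your sketch has a genuine gap. You assert that $\mu\bigl((\beta-(n+1)\alpha_{i_0})^\vee\bigr)=0$ will follow from writing $\beta^\vee=\sum_j c_j\alpha_j^\vee$, the identity $\sum_j c_j\mu(\alpha_j^\vee)=1$, and (P2) on the support of $\beta^\vee$. But the support of $\beta^\vee$ need not coincide with the complement of $F_\beta$ (e.g.\ in $B_2$ with $\beta=2\alpha_1+\alpha_2$ long, $\alpha_2$ lies in the coroot support of $\beta^\vee$ yet $\beta-\alpha_2\notin\Phi$), so (P2) gives you no grip on those coefficients. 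Moreover the target root $\beta-(n+1)\alpha_{i_0}$ depends on $\lambda$ through $n=\lambda(\alpha_{i_0}^\vee)$, and your outline gives no mechanism for this dependence to disappear. A correct version of your route would instead bound the $\alpha_{i_0}$-string length (so only $n=1$ survives outside $G_2$), observe $\beta(\alpha_{i_0}^\vee)=2$ forces $s_{i_0}\beta=\beta-2\alpha_{i_0}$, and compute $\mu\bigl((s_{i_0}\beta)^\vee\bigr)=\mu(\beta^\vee)-\mu(\alpha_{i_0}^\vee)\,\alpha_{i_0}(\beta^\vee)=1-\mu(\alpha_{i_0}^\vee)\leq 0$ using (P2) at $i_0$; none of this is what you wrote.

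The paper bypasses all of this with a one-line norm argument that works uniformly for every $i$ (not just $i_0$) and every type: since $\mu(\beta^\vee)=1$ one has $s_\beta\mu=\mu-\beta$, hence $(\mu-\beta\,|\,\mu-\beta)=(\mu\,|\,\mu)$, and then for $n=\lambda(\alpha_i^\vee)+1$,
\[
(\mu-\beta+n\alpha_i\,|\,\mu-\beta+n\alpha_i)=(\mu\,|\,\mu)+n(\alpha_i\,|\,\alpha_i)\bigl((\lambda+\mu-\beta)(\alpha_i^\vee)+1\bigr)>(\mu\,|\,\mu)
\]
by (P1). Since every weight $\nu$ of $V(\mu)$ satisfies $(\nu\,|\,\nu)\leq(\mu\,|\,\mu)$, the weight space $V(\mu)_{\mu-\beta+n\alpha_i}$ vanishes and $e_i^{n}\cdot v=0$ automatically. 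This avoids any root-string or type-by-type analysis.
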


\begin{proof}
Let $v_\mu$ denote a nonzero highest weight vector of $V(\mu)$, and choose a nonzero root vector $X_{-\beta} \in \mf[g]_{-\beta}$. Set $v:= X_{-\beta}.v_\mu$. For any $0 \neq X_\beta \in \mf[g]_\beta$, we have (using \cite{Ku3}, Theorem 1.5.4),
$$
X_\beta.v=X_\beta(X_{-\beta}.v_\mu)= [X_\beta, X_{-\beta}]v_\mu=(X_\beta | X_{-\beta})\nu^{-1}(\beta)\cdot v_\mu=(X_\beta | X_{-\beta}) \mu(\nu^{-1}(\beta))v_\mu , 
$$
where $\nu: \mf[h] \to \mf[h]^*$ is the map induced by $(\cdot | \cdot)$. This is nonzero as $(X_\beta | X_{-\beta}) \neq 0$ and by the regularity condition $(P2)$, $\mu(\nu^{-1}(\beta)) >0$.  Then, $v\in V(\mu)_{\mu-\beta}$ is a nonzero vector. 

By the dominance of $\lambda$ we  have $\lambda(\alpha_i^\vee)+1\geq 1$. Then, if $i \in F_\beta$, we get
$$e_i.v=e_i(X_{-\beta}.v_\mu)=[e_i, X_{-\beta}].v_\mu=0,$$ as by definition $\alpha_i-\beta \not \in \Phi \cup \{0\}$. Else, if $i \not \in F_\beta$, we have by the regularity condition $(P2)$ that $\lambda(\alpha_i^\vee)+1 \geq 2$, and 
$$
e_i^2.v=((\ad_{e_i})^2(X_{-\beta})).v_\mu=0
$$
by the assumption that $\beta-2\alpha_i \not \in \Phi^+$. Thus,  for all $i$, $e_i^{\lambda(\alpha_i^\vee)+1}.v=0$. This proves the proposition in the case $\beta-2\alpha_i \not \in \Phi^+$ for any $  i $ by using Proposition \ref{mult}.

For the proof in the case  $\mu(\beta^\vee)=1$, observe first that for any weight $\nu$ of $V(\mu)$, $(\mu | \mu)\geq (\nu | \nu)$ (cf. \cite{Kac}, Proposition 11.4 (a)). Denoting $n :=\lambda(\alpha_i^\vee) +1$, we have:
\begin{align*} 
(\mu -\beta +n\alpha_i | \mu -\beta +n\alpha_i) &=(\mu   | \mu)+ n^2(\alpha_i | \alpha_i) + 2n (\mu-\beta | \alpha_i),\,\,\text{since $s_\beta \mu = \mu -\beta$}\\
&=(\mu   | \mu)+ n^2(\alpha_i | \alpha_i) + n (\alpha_i | \alpha_i) (\mu-\beta | \alpha_i^\vee)\\
&=(\mu   | \mu)+ n (\alpha_i | \alpha_i) \left((\lambda+ \mu-\beta | \alpha_i^\vee)+1\right)\\
&> (\mu   | \mu),\,\,\,\text{since $\lambda+\mu-\beta\in \dom$ by (P1)}.
\end{align*}
Thus, $V(\mu)_{\mu -\beta +n\alpha_i} = 0.$

To prove the equation \eqref{eqnwahl}, by Corollaries \ref{ImRootComp}, 
\ref{redux} and \ref{positivemaximal}, it suffices to assume that 
$\beta = \delta - \gamma$, for a positive root $\gamma \in  \mathring{\Phi}^+$.  Any such root $\beta$ does satisfy the condition that $\beta - 2 \alpha_i \notin \Phi^+$ for any simple root $\alpha_i$. Hence the first part of the proposition proves \ref{eqnwahl}. 
\end{proof}

\begin{Rmk} {\rm This proof holds more generally for \textit{any} symmetrizable Kac-Moody algebra $\mf[g]$ and triple $(\lambda, \mu, \beta)$ satisfying $(P1)$ and $(P2)$ under  the  condition on $\beta$ as in Proposition \ref{oneroot}.} \end{Rmk}

Next, we consider triples $(\lambda, \mu, \beta)$ where $\beta$ is a real root with $\beta-2\alpha_i \in \Phi^+$ for some $0\leq i \leq \ell$. By Lemma \ref{root}, this $i$ is unique. For technical reasons, we assume that $\mf[g] \neq G_2^{(1)}$; this will be handled separately. 

\begin{Prop} \label{tworoot} Let $\mf[g] \neq G_2^{(1)}$, and let $(\lambda, \mu, \beta)$ be a Wahl triple with $\beta$ a real root. Suppose $\alpha_i$ is the unique simple root such that $\beta-2\alpha_i \in \Phi^+$. In particular, $\mu(\beta^\vee) \geq 1$ by the condition (P2).
Then, $V(\lambda+\mu-\beta) \subset V(\lambda) \otimes V(\mu)$ assuming at least one of the following: 
\begin{enumerate}
\item $\lambda \in \mathcal{P}^{+}$ is \textbf{regular} dominant, or 
\item $F_{\beta}=F_{\beta-\alpha_i}$
\end{enumerate}
\end{Prop}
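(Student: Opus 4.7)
The plan is to apply Kostant's multiplicity formula (Proposition \ref{mult}) by producing a nonzero vector $v \in V(\mu)_{\mu - \beta}$ with $e_j^{\lambda(\alpha_j^\vee) + 1} v = 0$ for every simple root $\alpha_j$. The naive choice $v = X_{-\beta} v_\mu$ used in Proposition \ref{oneroot} fails here: since $\beta - 2\alpha_i \in \Phi^+$, the expression $e_i^2 (X_{-\beta} v_\mu) = (\operatorname{ad} e_i)^2 X_{-\beta} \cdot v_\mu$ is a nonzero multiple of $X_{-(\beta - 2\alpha_i)} v_\mu$, and is not killed by low powers of $e_i$. We therefore introduce a correction term and try
\[
v := X_{-\beta} v_\mu + c\, X_{-(\beta - \alpha_i)} X_{-\alpha_i} v_\mu,
\]
with scalar $c \in \mathbb{C}$ to be determined. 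The root $\beta - \alpha_i$ lies in $\Phi^+$ by convexity of the $\alpha_i$-string through $\beta$, so both summands have weight $\mu - \beta$.

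The key computation is at $j = i$. Expanding via iterated commutators and using $[e_i, X_{-\alpha_i}] v_\mu = \mu(\alpha_i^\vee) v_\mu$, one obtains schematically
\[
e_i^2 v = k_2\bigl(k_1 + 2c\, \mu(\alpha_i^\vee)\bigr)\, X_{-(\beta - 2\alpha_i)} v_\mu \,+\, c\, k_2 k_3\, X_{-(\beta - 3\alpha_i)} X_{-\alpha_i} v_\mu,
\]
for nonzero structure constants $k_1, k_2$ and a constant $k_3$ which vanishes precisely when $\beta - 3\alpha_i \notin \Phi$. The exclusion of $\mathfrak{g} = G_2^{(1)}$ is exactly what guarantees $\beta - 3\alpha_i \notin \Phi$ in this setting, so the second term disappears; in $G_2^{(1)}$ a single correction parameter cannot eliminate both obstructions simultaneously. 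We then set $c := -k_1/(2\mu(\alpha_i^\vee))$, which is well defined since (P2) together with $\beta - \alpha_i \in \Phi$ forces $\mu(\alpha_i^\vee) \geq 1$. Since (P2) likewise yields $\lambda(\alpha_i^\vee) \geq 1$, the power $e_i^{\lambda(\alpha_i^\vee) + 1}$ already annihilates $v$.

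For $j \neq i$, Lemma \ref{root} ensures that neither $\beta - 2\alpha_j$ nor $\beta - 2\alpha_j - \alpha_i$ is a root or zero; hence $e_j^2$ annihilates both summands of $v$ whenever $\lambda(\alpha_j^\vee) + 1 \geq 2$. If $j \notin F_\beta$, (P2) supplies this inequality directly. The delicate case is $j \in F_\beta$ with $j \notin F_{\beta - \alpha_i}$, in which $e_j$ kills the first summand of $v$ but not the second; we require either $\lambda(\alpha_j^\vee) + 1 \geq 2$, which is hypothesis (1) ($\lambda$ regular), or the absence of such $j$, which is hypothesis (2) ($F_\beta = F_{\beta - \alpha_i}$). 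Either way, $e_j^{\lambda(\alpha_j^\vee) + 1} v = 0$, and Proposition \ref{mult} delivers the desired component.

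The main obstacle will be the careful commutator bookkeeping in the $e_i^2 v$ computation, namely verifying that the structure constants $k_1, k_2$ are simultaneously nonzero so the linear equation for $c$ admits a solution, and confirming (via inspection of the explicit affine root systems) that $G_2^{(1)}$ is the unique affine type in which $\beta - 3\alpha_i$ can survive as a root, which is precisely what forces its exclusion.
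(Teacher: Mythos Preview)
Your approach is essentially identical to the paper's: the vector $v = X_{-\beta} v_\mu + c\, X_{-(\beta-\alpha_i)} f_i v_\mu$ is, up to an overall scalar, exactly the paper's $(X_{-\beta+\alpha_i} f_i - 2\mu(\alpha_i^\vee) X_{-\beta}) v_\mu$ (with the normalization $X_{-\beta+\alpha_i} := [e_i, X_{-\beta}]$, so that your $k_1 = 1$), and the subsequent computation of $e_i^2 v$, the use of Lemma~\ref{root} for $j \neq i$, and the case split on $j \in F_\beta$ versus $j \in F_{\beta-\alpha_i}$ all match. The one step you do not mention is checking $v \neq 0$, which the paper handles by showing $X_\beta \cdot v \neq 0$ using $(P2)$ and $\beta(\alpha_i^\vee) \geq 0$.
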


\begin{proof}
Again, let $v_\mu \in V(\mu)$ denote a nonzero highest weight vector, $X_{-\beta} \in \mf[g]_{-\beta}$ a nonzero root vector, and define $X_{-\beta+\alpha_i}:=[e_i, X_{-\beta}]$, $X_{-\beta+2\alpha_i}:=[e_i, X_{-\beta+\alpha_i}]$. Since $\mf[g] \neq G_2^{(1)}$, we have that $|\beta(\alpha_i^\vee)|\leq 2$, so that $\beta-3\alpha_i, \beta+\alpha_i \not \in \Phi$. Set
$$
v:=(X_{-\beta+\alpha_i}f_i-2\mu(\alpha_i^\vee)X_{-\beta})\cdot v_\mu \in V(\mu)_{\mu-\beta}.
$$
Then, by a similar computation as in the proof of Proposition \ref{oneroot} (making use of the condition (P2) since $i\notin F_\beta$), we have  $X_\beta.v \neq0$ for any $0 \neq X_{\beta} \in \mf[g]_\beta$, so that $v \neq 0$. (Since $\beta+\alpha_i \notin \Phi, \beta(\alpha_i^\vee)\geq 0$.)
 Now, for any $j \neq i$, we have 
$$
\begin{aligned}
e_j^2.v&=e_j^2(X_{-\beta+\alpha_i}f_i.v_\mu)-2\mu(\alpha_i^\vee)e_j^2(X_{-\beta}.v_\mu) \\
&=((\ad_{e_j})^2(X_{-\beta+\alpha_i})f_i).v_\mu-2\mu(\alpha_i^\vee)((\ad_{e_j})^2(X_{-\beta})).v_\mu \\
&=0
\end{aligned}
$$
as $(\ad_{e_j})^2(X_{-\beta+\alpha_i})=0$ and $(\ad_{e_j})^2(X_{-\beta})=0$ by Lemma \ref{root}. Further, we have 
$$
\begin{aligned}
e_i.v&=X_{-\beta+2\alpha_i}f_iv_\mu+\mu(\alpha_i^\vee)X_{-\beta+\alpha_i}v_\mu-2\mu(\alpha_i^\vee)X_{-\beta+\alpha_i}v_\mu \\
&=X_{-\beta+2\alpha_i}f_iv_\mu-\mu(\alpha_i^\vee)X_{-\beta+\alpha_i}v_\mu \\
\implies e_i^2.v &= \mu(\alpha_i^\vee)X_{-\beta+2\alpha_i}v_\mu-\mu(\alpha_i^\vee)X_{-\beta+2\alpha_i}v_\mu \\
&=0
\end{aligned}
$$

Therefore $e_j^2.v=0$ for all $j$. \\

If $\lambda \in \mathcal{P}^{+}$ is regular dominant, then $\lambda(\alpha_j^\vee)+1 \geq 2$ for all $j$. So, by Proposition \ref{mult} we get $V(\lambda+\mu-\beta) \subset V(\lambda) \otimes V(\mu)$. 

Else, for any $j$ such that $\lambda(\alpha_j^\vee)=0$, by condition (P2) we have $j \in F_\beta$; note that $i \not \in F_\beta$. Thus,  if $F_\beta=F_{\beta-\alpha_i}$, we have 
$$
e_j.v=[e_j, X_{-\beta+\alpha_i}]f_i\cdot v_\mu-2\mu(\alpha_i^\vee)[e_j, X_{-\beta}]\cdot v_\mu = 0,
$$
so, again by Proposition \ref{mult}, we have $V(\lambda+\mu-\beta)\subset V(\lambda) \otimes V(\mu)$.

\end{proof}

To end this section, recall the set of indices $F_\beta$ for $\beta \in \Phi^+$ from Definition \ref{defF}. The following dominant weights will play a crucial role for the remaining constructions. 

\begin{Def} \label{rhobeta} 
{\rm Given $\beta \in \Phi^+_{Re}$, define $\rho_\beta:= \sum_{i \not \in F_\beta} \Lambda_i$. }
\end{Def}

\noindent By construction, the weight $\rho_\beta$ satisfies condition (P2) for the root $\beta$ and is the \textit{minimal} dominant weight with $\rho_\beta(d)=0$ that can do so. Therefore, we can conclude the following lemma by the additivity property of the tensor decomposition Corollary \ref{additive}.

\begin{Lem} \label{Newlemma}
If $2\rho_\beta-\beta \in \dom$ is such that 
$
m_{\rho_\beta, \rho_\beta}^{2\rho_\beta-\beta} \not = 0$, then $m_{\lambda, \mu}^{\lambda+\mu-\beta} \neq 0
$
for any Wahl triple $(\lambda, \mu, \beta)$. 

Further, it is easy to see that $2\rho_\beta -\beta \in \mathcal{P}^{+}$ for any $\mf[g]$ except possibly for $G_2$-type $\mathring{\mf[g]}$.
\end{Lem}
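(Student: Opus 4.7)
The plan is to prove the first assertion via the additivity property of tensor decomposition (Corollary \ref{additive}) applied to the splittings $\lambda = \rho_\beta + (\lambda - \rho_\beta)$ and $\mu = \rho_\beta + (\mu - \rho_\beta)$, and to prove the second assertion by a direct case analysis using the $\alpha_i$-root string through $\beta$.

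For the first assertion, given a Wahl triple $(\lambda, \mu, \beta)$, the key step is to verify that both $\lambda' := \lambda - \rho_\beta$ and $\mu' := \mu - \rho_\beta$ lie in $\dom$. By the definition of $\rho_\beta$, we have $\rho_\beta(\alpha_i^\vee) = 1$ for $i \notin F_\beta$ and $\rho_\beta(\alpha_i^\vee) = 0$ for $i \in F_\beta$ (and $\rho_\beta(d) = 0$, consistent with the standing normalization $\lambda(d) = \mu(d) = 0$). For $i \in F_\beta$, dominance is immediate from that of $\lambda$. For $i \notin F_\beta$ we have $\beta - \alpha_i \in \Phi \sqcup \{0\}$, so the contrapositive of (P2) forces $\lambda(\alpha_i^\vee) \geq 1$ (and similarly $\mu(\alpha_i^\vee) \geq 1$), which gives $(\lambda - \rho_\beta)(\alpha_i^\vee) \geq 0$. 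Invoking Corollary \ref{additive} with this choice of $\lambda'$ and $\mu'$ then yields
$$m_{\lambda, \mu}^{\lambda + \mu - \beta} \;=\; m_{\rho_\beta + \lambda',\,\rho_\beta + \mu'}^{(2\rho_\beta - \beta) + \lambda' + \mu'} \;\geq\; m_{\rho_\beta, \rho_\beta}^{2\rho_\beta - \beta} \;\neq\; 0.$$

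For the second assertion, I would evaluate $(2\rho_\beta - \beta)(\alpha_i^\vee)$ one simple coroot at a time. For $i \in F_\beta$: the condition $\beta - \alpha_i \notin \Phi \sqcup \{0\}$ says that $\beta$ is the bottom of its $\alpha_i$-root string, so the standard root-string formula gives $\beta(\alpha_i^\vee) \leq 0$; combined with $\rho_\beta(\alpha_i^\vee) = 0$ this yields $(2\rho_\beta - \beta)(\alpha_i^\vee) \geq 0$. For $i \notin F_\beta$: we have $(2\rho_\beta - \beta)(\alpha_i^\vee) = 2 - \beta(\alpha_i^\vee)$, which is nonnegative provided $\beta(\alpha_i^\vee) \leq 2$. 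This upper bound holds in every (finite) root system except of type $G_2$, where Cartan integers can reach $3$; this accounts for the stated exception.

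The main obstacle is not conceptual but bookkeeping: the whole argument hinges on having designed $\rho_\beta$ as the \emph{minimal} dominant weight compatible with (P2), so that $\lambda \geq \rho_\beta$ and $\mu \geq \rho_\beta$ componentwise for every Wahl triple. Once that observation is in hand, the first part is an immediate invocation of additivity, and the second part reduces to the standard bound $|\beta(\alpha_i^\vee)| \leq 2$ outside of $G_2$. One minor care point is verifying that the argument respects the $\delta$-component, which is automatic from $\rho_\beta(d) = 0$ together with the normalization $\lambda(d) = \mu(d) = 0$ already in force.
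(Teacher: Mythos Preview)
Your proposal is correct and follows exactly the approach the paper indicates: the text immediately preceding the lemma states that $\rho_\beta$ is the minimal dominant weight satisfying (P2) and invokes Corollary \ref{additive}, and the second assertion is left as ``easy to see.'' You have simply spelled out the details the paper omits --- verifying $\lambda-\rho_\beta,\,\mu-\rho_\beta\in\dom$ via (P2), and bounding $\beta(\alpha_i^\vee)\leq 2$ via the root-string argument outside type $G_2$.
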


\section{Exceptional Maximal Components}\label{exceptional}

In light of Corollaries \ref{ImRootComp},
\ref{redux} and  \ref{positivemaximal}, all that remains to show to complete the proof of Theorem I is the existence of root components $V(\lambda+\mu-\beta) \subset V(\lambda) \otimes V(\mu)$ for $\beta=-\gamma+\delta$, where $\gamma \in \mathring{\Phi}^+$. We can reduce further, by making use of Proposition \ref{oneroot} and Lemma \ref{root}, to the case that $\beta-2\alpha_i \in \Phi^+$ for some unique $i$  (in particular, $\mathring{\mf[g]}$ is not simply-laced). We can exclude the case $i=0$, via the following easy lemma. 

\begin{Lem}\label{lem9.1}
Let $(\lambda, \mu, \beta)$ be a Wahl triple such that $\beta=-\gamma+k\delta$ for some $\gamma \in \mathring{\Phi}^+$ and $k \geq 1$ and such that $\beta-2\alpha_0 \in \Phi$. Then, $V(\lambda+\mu-\beta) \subset V(\lambda) \otimes V(\mu)$. 
\end{Lem}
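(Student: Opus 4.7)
The plan is to reduce to the case of the simple positive root $\alpha_0$ and then lift by $-(k-1)\delta$ via the Virasoro machinery packaged in Corollary \ref{redux}. The pivotal observation is that the hypothesis $\beta-2\alpha_0\in\Phi$, together with the form $\beta=-\gamma+k\delta$, pins down $\gamma=\theta$ completely. Indeed, write $\beta-2\alpha_0=-\gamma+2\theta+(k-2)\delta$; the finite part $-\gamma+2\theta$ must either vanish (impossible since $2\theta\notin\mathring{\Phi}$) or itself lie in $\mathring{\Phi}$, and by maximality of the highest root $\theta$ this forces $2\theta-\gamma=\theta$, i.e.\ $\gamma=\theta$. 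Consequently $\beta=k\delta-\theta=\alpha_0+(k-1)\delta$.

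Next I would verify that $(\lambda,\mu,\alpha_0)$ is itself a Wahl triple. The crucial input is condition (P2) for the original triple $(\lambda,\mu,\beta)$ at $i=0$: since $\beta-\alpha_0=(k-1)\delta\in\Phi^+\sqcup\{0\}$, its contrapositive forces $\lambda(\alpha_0^\vee)\geq 1$ and $\mu(\alpha_0^\vee)\geq 1$. This immediately gives (P1) for $\alpha_0$ (at $j=0$ directly, and at $j\neq 0$ because $\alpha_0(\alpha_j^\vee)\leq 0$) and handles (P2) for $\alpha_0$ at $i=0$; for $i\neq 0$, $\alpha_0-\alpha_i\notin\Phi\cup\{0\}$, so (P2) is vacuous. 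Moreover, since $\theta$ is the highest finite root, $\alpha_0-2\alpha_i=\delta-\theta-2\alpha_i$ has finite part $-\theta-2\alpha_i$, which is never a finite root (nor zero), so $\alpha_0-2\alpha_i\notin\Phi^+$ for every $0\leq i\leq \ell$. Proposition \ref{oneroot} then delivers $V(\lambda+\mu-\alpha_0)\subset V(\lambda)\otimes V(\mu)$.

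Finally, I would note that $V(\lambda+\mu-\alpha_0)$ is a $\delta$-maximal root component: a weight of the form $\lambda+\mu-\alpha_0+k'\delta$ with $k'>0$ would require $\alpha_0-k'\delta\in Q^+$, which fails because $\delta=\alpha_0+\sum_{i\geq 1}a_i\alpha_i$ with each $a_i>0$. Hence Corollary \ref{redux}, applied to the Wahl triple $(\lambda,\mu,\alpha_0)\in(\dom)^2\times\Phi^+_{Re}$ and its $\delta$-maximal component $V(\lambda+\mu-\alpha_0)$, yields $V(\lambda+\mu-\alpha_0-k'\delta)\subset V(\lambda)\otimes V(\mu)$ for every $k'\geq 0$; specializing to $k'=k-1$ produces exactly $V(\lambda+\mu-\beta)$. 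The only place where any care is needed is the Wahl-triple bookkeeping for $\alpha_0$ (step two); once that is set up, Proposition \ref{oneroot} together with Corollary \ref{redux} closes the argument essentially automatically.
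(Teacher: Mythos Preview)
Your argument is correct and follows the same route as the paper: identify $\gamma=\theta$ from $\beta-2\alpha_0\in\Phi$, rewrite $\beta=\alpha_0+(k-1)\delta$, establish the $\delta$-maximal component $V(\lambda+\mu-\alpha_0)$, and then descend by Corollary~\ref{redux}. The only difference is in how you produce $V(\lambda+\mu-\alpha_0)$: you verify the hypotheses of Proposition~\ref{oneroot} for the Wahl triple $(\lambda,\mu,\alpha_0)$, whereas the paper constructs the highest weight vector directly as a linear combination $c(f_0\cdot v_\lambda)\otimes v_\mu + d\,v_\lambda\otimes (f_0\cdot v_\mu)$ (using, just as you do, that (P2) for $\beta$ at $i=0$ forces $\lambda(\alpha_0^\vee),\mu(\alpha_0^\vee)\geq 1$). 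Your route has the advantage of staying within the general machinery already set up; the paper's is more hands-on but equally short.
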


\begin{proof}
Direct computation shows that 
$$
\beta-2\alpha_0 = -\gamma+k\delta-2\alpha_0=2\theta-\gamma+(k-2)\delta.
$$
This is a root if and only if $2\theta-\gamma \in \mathring{\Phi}$, so necessarily $\gamma=\theta$. But, then we have $\beta=\alpha_0+(k-1)\delta$. Thus,  we can reduce to the $\delta$-maximal component associated to $\beta=\alpha_0$; the existence of $V(\lambda+\mu-\alpha_0)$ in $V(\lambda) \otimes V(\mu)$ is clear by taking any  linear combination: $c(f_0\cdot v_\lambda)\otimes v_\mu + d v_\lambda\otimes f_0\cdot v_\mu$, for $c,d\in \mathbb{C}$.
\end{proof}

The remaining options for such $\beta = -\gamma + \delta$ (where $\gamma \in \mathring{\Phi}^+$), with $i \neq 0$, correspond to roots $\gamma \in \mathring{\Phi}^+$ such that $\gamma+2\alpha_i \in \mathring{\Phi}^+$. We collect these possibilities, organized by the type $\mf[g]$, $\gamma \in \mathring{\Phi}^+$, and the associated unique simple root denoted $\alpha_i(\gamma)$, in Table \ref{Exceptional} below.

\begin{table}[h!]
\centering
\begin{tabular}{||c|c|c||} 
\hline
$\mf[g]$ & $\gamma$ & $\alpha_j(\gamma)$ \\ 
\hline
$B_\ell^{(1)}$ & $\sum_{i \leq k < \ell} \alpha_k, \ 1 \leq i < \ell$ & $\alpha_\ell$  \\
$C_\ell^{(1)}$ & $\left( 2 \sum_{i \leq k < \ell} \alpha_k \right) + \alpha_\ell, \ 2 \leq i \leq \ell$ & $\alpha_{i-1}$ \\
$F_4^{(1)}$ & $\alpha_2$ & $\alpha_3$ \\
$F_4^{(1)}$ & $\alpha_1+\alpha_2$ & $\alpha_3$ \\
$F_4^{(1)}$ & $\alpha_2 + 2\alpha_3$ & $\alpha_4$ \\
$F_4^{(1)}$ & $\alpha_1+\alpha_2+2\alpha_3$ & $\alpha_4$ \\
$F_4^{(1)}$ & $\alpha_1+2\alpha_2+2\alpha_3$ & $\alpha_4$ \\
$F_4^{(1)}$ & $\alpha_1+2\alpha_2+2\alpha_3+2\alpha_4$ & $\alpha_3$ \\
$G_2^{(1)}$ & $\alpha_2$ & $\alpha_1$ \\
$G_2^{(1)}$ & $\alpha_1+\alpha_2$ & $\alpha_1$ \\
\hline

\end{tabular}
\caption{Exceptional Components}
\label{Exceptional}
\end{table}

We refer to these roots $\beta$ and their associated root components as "exceptional," since root behavior of these  do not appear in the semisimple setting and as the methods to demonstrate their existence in the tensor decomposition do not uniformly fit into the general approach of \cite{Ku1}. Instead, we will primarily make use of the PRV  components to construct these final $\delta$-maximal components. {\it Throughout the remainder of this section, we deal with  Wahl triples $(\lambda, \mu, \beta =\delta-\gamma)$ for each specified $\gamma$ in the above table.} We first recall the following results:

\begin{Thm} \label{PRV} (\cite{Ku2}, Theorem 3.7; \cite{M2}, Corollaire 3)(PRV components) Let $\mf[g]$ be any symmetrizable Kac-Moody Lie algebra. For any $\lambda, \mu \in \dom$ and the weyl group elements $v, w \in W$  such that $\eta:= v\lambda+w\mu \in \dom$, we have $V(\eta) \subset V(\lambda) \otimes V(\mu)$.
\end{Thm}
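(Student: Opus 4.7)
The plan is to use the adjunction established in the proof of Proposition \ref{mult}:
$$\Hom_\g(V(\eta), V(\lambda)\otimes V(\mu)) \simeq \Hom_\fb\bigl(V(\lambda)^\vee \otimes \C_\eta, V(\mu)\bigr),$$
so it suffices to produce a nonzero $\fb$-equivariant map $f: V(\lambda)^\vee \otimes \C_\eta \to V(\mu)$. Natural candidates come from extremal weight vectors: fix $0 \neq \xi_v \in V(\lambda)^\vee$ of weight $-v\lambda$ and $0 \neq v_{w\mu} \in V(\mu)$ of weight $w\mu$ (both extremal weight spaces are one-dimensional by integrability). Then $\xi_v \otimes 1_\eta$ has weight $-v\lambda + \eta = w\mu$, matching the weight of $v_{w\mu}$, so the assignment $\xi_v \otimes 1_\eta \mapsto v_{w\mu}$ is at least weight-preserving.

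My first step is to show that this assignment extends $\fb$-equivariantly to the cyclic submodule $M := U(\fb)\cdot(\xi_v \otimes 1_\eta) \subset V(\lambda)^\vee \otimes \C_\eta$. This is an annihilator comparison: every element of $U(\fb)$ killing $\xi_v \otimes 1_\eta$ must also kill $v_{w\mu}$. The relevant annihilators are governed by explicit $\mathfrak{sl}_2^{(i)}$-integrability relations along each simple root $\alpha_i$, and a direct check using the dominance $\eta \in \dom$ together with the extremality of both $\xi_v$ and $v_{w\mu}$ yields the required inclusion. This essentially recovers a Demazure-module embedding $M \hookrightarrow V(\mu)$ after the appropriate weight shift.

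My second step is to globalize the map from $M$ to all of $V(\lambda)^\vee \otimes \C_\eta$. The elegant route is geometric: realize $V(\lambda)^\vee \simeq H^0(\G/\B, \cL(\lambda))$ via Borel--Weil for Kac--Moody groups, and interpret $f$ as arising from restriction of a section of $\cL(\lambda \boxtimes \mu)$ on $\G/\B \times \G/\B$ to a suitable $\cH$-fixed point in a diagonal-type Schubert subvariety. Higher cohomology vanishing on Schubert varieties (in the spirit of Section 11) then lifts the locally-defined map to a globally-defined $\fb$-equivariant one.

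The principal obstacle is this globalization step: the formal construction on $M$ is straightforward, but extending to the full module $V(\lambda)^\vee \otimes \C_\eta$ requires genuine cohomological input. Kumar's original argument \cite{Ku2} invokes vanishing of higher cohomology on Bott--Samelson resolutions, while Mathieu \cite{M2} bypasses the geometry by working with Kashiwara's global crystal bases on quantum groups and specializing at $q=1$. Both strategies extend to the symmetrizable Kac--Moody setting; the geometric one is the most congenial here since the requisite Schubert cohomology machinery is already developed in the present paper.
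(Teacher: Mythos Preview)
The paper does not supply a proof of this theorem; it is quoted from \cite{Ku2} and \cite{M2} and used as a black box in the case-by-case constructions of Section~9. Your sketch is broadly aligned with Kumar's original argument in \cite{Ku2}, so there is no in-paper proof to compare against.

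Two remarks on the sketch itself. First, your Step~1 is not the direct check you describe: the $U(\fn)$-annihilator of an extremal weight vector in an integrable module is \emph{not} obviously generated by the $e_i^{N_i+1}$-type relations alone; establishing this is essentially the presentation of Demazure modules (Joseph, Polo, Mathieu), which itself rests on the same Schubert-variety cohomology vanishing you defer to Step~2. So the two steps are not really separable---both draw on the same geometric input, and the difficulty you flag as the ``principal obstacle'' in Step~2 is already present in Step~1. Second, your attribution of Mathieu's proof to crystal bases is anachronistic: \cite{M2} appeared in 1989, before Kashiwara's work. Mathieu's argument is also geometric, via Frobenius splitting of Schubert varieties in the Kac--Moody setting, to obtain the requisite vanishing. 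Your overall instinct---that the content of the theorem is encoded in higher-cohomology vanishing on Schubert varieties---is correct.
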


\subsection{\bf $B_\ell^{(1)}$}
For $\gamma= \sum_{i \leq k < \ell} \alpha_k, \ 1 \leq i < \ell$, $\beta=-\gamma+\delta$, we have $\alpha_\ell$ as the unique simple root such that $\beta-2\alpha_\ell \in \Phi^+$. Setting $[\ell]:=\{0,1,2,\dots,\ell\}$, we have:
$$
F_\beta= \begin{cases}
[\ell]\backslash \{i-1, \ell\} & i \neq 2 \\
[\ell] \backslash \{0,1,\ell\} & i =2 
\end{cases}
$$
Comparing this to $F_{\beta-\alpha_\ell}$, we see that $F_\beta=F_{\beta-\alpha_\ell}$ in all the cases; thus we can apply Proposition \ref{tworoot}. 

\subsection{\bf $C_\ell^{(1)}$} For any $\gamma = \gamma_i  = (2\sum_{i\leq k<\ell}\alpha_k) + \alpha_\ell \,(2\leq i \leq \ell)$ in the above table, $F_{\beta_i} =[\ell]\setminus \{i-1\}$, where $\beta_i := \delta - \gamma_i$. 
 In this case $\rho_{\beta_i}(\beta_i^\vee) = 1$. Thus, Proposition \ref{oneroot} and Lemma \ref{Newlemma} take care of this case.

\subsection{\bf $F_4^{(1)}$}
\begin{itemize}

\item $\gamma=\alpha_2$: Here, $\beta=-\alpha_2+\delta=\alpha_0+2\alpha_1+2\alpha_2+4\alpha_3+2\alpha_4$, $F_\beta=\{0,2,4\}$, and we can take $\lambda=\mu=\rho_\beta=\Lambda_1+\Lambda_3$. We use the following variant of the  construction from \cite{Ku1}: define $v \in V(\rho_\beta)_{\rho_\beta-\beta}$ by
$$
v:= \left( X_{-\beta+\alpha_3}f_3 -2X_{-\beta}-X_{-\beta+\alpha_3+\alpha_4} [f_3, f_4] \right) v_{\Lambda_1+\Lambda_3}.
$$
Then, one can check directly as previously that $X_\beta.v\neq 0$, so that $v \neq 0$, and that $e_0.v=e_2.v=e_4.v=e_1^2.v=e_3^2.v=0$, as required to apply Proposition \ref{mult}. \\
\vspace{1em}

\item $\gamma=\alpha_1+\alpha_2$: We have $\beta=-\gamma+\delta=\alpha_0+\alpha_1+2\alpha_2+4\alpha_3+2\alpha_4$, $F_\beta = \{1, 2, 4\}$, and we can take $\lambda=\mu=\rho_\beta=\Lambda_0+\Lambda_3$. Set $w=s_3s_2s_4s_3s_2s_4s_3$, $v=s_1s_0$. Then, 
$$
v\rho_\beta=\rho_\beta -\alpha_0-\alpha_1, \ w\rho_\beta=\rho_\beta-2\alpha_2-4\alpha_3-2\alpha_4.
$$
Thus, we have $V(w\rho_\beta+v\rho_\beta)=V(2\rho_\beta-\beta) \subset V(\rho_\beta) \otimes V(\rho_\beta)$ as a  PRV component. 
\vspace{1em}

\item $\gamma=\alpha_2+2\alpha_3$: In this case, we have $\beta=-\gamma+\delta=\alpha_0+2\alpha_1+2\alpha_2+2\alpha_3+2\alpha_4$, and $\alpha_4$ is the unique simple root such that $\beta-2\alpha_4 \in \Phi^+$. Then, $F_\beta=\{0,2,3\}=F_{\beta-\alpha_4}$ and  we can apply Proposition \ref{tworoot}. 
\vspace{1em}

\item $\gamma=\alpha_1+\alpha_2+2\alpha_3$: We have $\beta=-\gamma+\delta=\alpha_0+\alpha_1+2\alpha_2+2\alpha_3+2\alpha_4$, and $\alpha_4$ is the unique simple root such that $\beta-2\alpha_4 \in \Phi^+$. As in the previous case, we compute $F_\beta=\{1,3\} = F_{\beta-\alpha_4}$, so that we can apply Proposition \ref{tworoot}. 
\vspace{1em}

\item $\gamma=\alpha_1+2\alpha_2+2\alpha_3$: For this case,  we get $\beta=-\gamma+\delta=\alpha_0+\alpha_2+\alpha_2+2\alpha_3+2\alpha_4$. Set $w=s_4s_3s_1s_2s_3s_4$, $v=s_0$, and use $\lambda=\mu=\rho_\beta=\Lambda_0+\Lambda_4$. Then, 
$$
v\rho_\beta = \Lambda_0+\Lambda_4-\alpha_0,\,\,w\rho_\beta= \rho_\beta-\alpha_1-\alpha_2-2\alpha_3-2\alpha_4,
$$
so that we get $V(w\rho_\beta+v\rho_\beta)=V(2\rho_\beta-\beta) \subset V(\rho_\beta) \otimes V(\rho_\beta)$ as a PRV component. 
\vspace{1em}

\item $\gamma=\alpha_1+2\alpha_2+2\alpha_3+2\alpha_4$: In this final case, we have $\beta=-\gamma+\delta=\alpha_0+\alpha_1+\alpha_2+2\alpha_3$, and $\alpha_3$ is the unique simple root such that $\beta-2\alpha_3 \in \Phi^+$. Then, we see that $F_\beta=\{1,2,4\}=F_{\beta-\alpha_3}$, so we can apply Proposition \ref{tworoot}. 

\end{itemize}

\subsection{\bf $G_2^{(1)}$}
\begin{itemize}

\item $\gamma=\alpha_2$: We have $\beta=-\alpha_2+\delta=\alpha_0+3\alpha_1+\alpha_2$. Then, the minimal dominant $\lambda, \mu \in \mathcal{P}^+$ satisfying the necessary conditions (P1) and (P2) are $\lambda=\Lambda_0+\Lambda_1$, $\mu=\Lambda_0+2\Lambda_1$. Setting $w=s_1s_2s_1$, $v=s_0$, we have 
$$
v\mu = \Lambda_0+ 2 \Lambda_1-\alpha_0,\,\, w\lambda=s_1s_2s_1(\Lambda_0+\Lambda_1) = \lambda-3\alpha_1-\alpha_2,
$$
so that we have $V(w\lambda+v\mu)=V(\lambda+\mu-\beta) \subset V(\lambda) \otimes V(\mu)$ as a PRV component. \vspace{1em}

\item $\gamma=\alpha_1+\alpha_2$: We have $\beta=-\alpha_1-\alpha_2+\delta=\alpha_0+2\alpha_1+\alpha_2$, $F_\beta=\{2\}$. Then, we have $\rho_\beta=\Lambda_0+\Lambda_1$, and $2\rho_\beta-\beta \in \mathcal{P}^+$. Setting $w=s_2s_1$ and $v=s_0s_1$, we have $w\rho_\beta= \Lambda_0+\Lambda_1-\alpha_1-\alpha_2$ and $v\rho_\beta=\Lambda_0+\Lambda_1-\alpha_0-\alpha_1$, so that we have $V(w\rho_\beta+v\rho_\beta) = V(2\rho_\beta-\beta) \subset V(\rho_\beta) \otimes V(\rho_\beta)$ as a PRV component. 
\end{itemize}

\section{Main Theorem on Root Components}

Combining Corollaries \ref{ImRootComp}, \ref{redux} and \ref{positivemaximal}, Lemma \ref{Newlemma} and Proposition \ref{oneroot}
 and the results from Section \ref{exceptional}, we get the following main result of our paper (cited as Theorem I in  Introduction):

Recall that by a Wahl triple we mean a triple $(\lambda, \mu, \beta) \in ({\dom})^2 \times \Phi^+$ such that 
\begin{itemize}
\item[(P1)] $\lambda+\mu-\beta \in \dom$, and 
\item[(P2)] If $\lambda(\alpha_i^\vee)=0$ or $\mu(\alpha_i^\vee)=0$, then $\beta- \alpha_i \not \in \Phi \sqcup \{0\}$.
\end{itemize} 

\begin{Thm} \label{mainthm}For any affine Lie algebra $\mf[g]$ and Wahl triple $(\lambda, \mu, \beta) \in ({\dom})^2 \times \Phi^+$, $$V(\lambda+\mu-\beta) \subset V(\lambda) \otimes V(\mu).$$
\end{Thm}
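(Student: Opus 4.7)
The plan is to dispatch the theorem by a case split on $\beta \in \Phi^+$ using the battery of results assembled in the preceding sections. First, if $\beta \in \Phi^+_{Im}$ then $\beta = k\delta$ and the regularity condition (P2) forces both $\lambda$ and $\mu$ to be regular dominant, so Corollary \ref{ImRootComp} directly yields the claim. For a real root $\beta$, write $\beta = \gamma + k\delta$ with $\gamma \in \mathring{\Phi}$ and $k \geq 0$ (with $k \geq 1$ when $\gamma \in \mathring{\Phi}^-$). Since (P1) and (P2) are invariant under $\delta$-shifts, Corollary \ref{redux} lets me propagate the existence of the $\delta$-maximal root component to all deeper $\delta$-shifts, so it suffices to handle the two $\delta$-maximal cases $\beta = \gamma \in \mathring{\Phi}^+$ and $\beta = \delta - \gamma$ with $\gamma \in \mathring{\Phi}^+$.

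The first of these is exactly Corollary \ref{positivemaximal}, which lifts Kumar's finite-dimensional result \cite{Ku1} to $\g$ after using the $d$-degree argument of Proposition \ref{finite} to ensure that $e_0$ annihilates the $\mathring{\g}$-highest weight vector generating $\mathring{V}(\lambda+\mu-\gamma)$. The second case, where $\beta = \delta - \gamma$, is more delicate. If no simple root $\alpha_i$ satisfies $\beta - 2\alpha_i \in \Phi^+$, then Proposition \ref{oneroot} produces the desired component via the explicit vector $X_{-\beta} v_\mu$. Otherwise, Lemma \ref{root} guarantees that such an $\alpha_i$ is unique: if $i = 0$, Lemma \ref{lem9.1} reduces further to $\beta = \alpha_0$, handled trivially by a linear combination of $f_0 v_\lambda \otimes v_\mu$ and $v_\lambda \otimes f_0 v_\mu$; while if $i \neq 0$, Proposition \ref{tworoot} applies whenever $\lambda$ is regular or $F_\beta = F_{\beta - \alpha_i}$.

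The truly residual triples are enumerated in Table \ref{Exceptional} for types $B^{(1)}_\ell$, $C^{(1)}_\ell$, $F^{(1)}_4$, and $G^{(1)}_2$. For each of them the strategy is to invoke additivity of the tensor decomposition (Corollary \ref{additive}, as packaged in Lemma \ref{Newlemma}) to reduce to the minimal Wahl triple $(\rho_\beta, \rho_\beta, \beta)$, and then to verify existence of the root component for this minimal triple by exhibiting a PRV pair $(v, w) \in W \times W$ with $v\rho_\beta + w\rho_\beta = 2\rho_\beta - \beta$ via Theorem \ref{PRV}, or, when no such PRV representation is available, by constructing an explicit vector in $V(\rho_\beta)_{\rho_\beta - \beta}$ killed by the requisite powers of every $e_j$. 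I expect these last, PRV-free, constructions---in particular the $F_4^{(1)}$ case $\gamma = \alpha_2$ and the $G_2^{(1)}$ entries---to be the main obstacle: they demand precisely balanced linear combinations of nested commutators of negative root operators that simultaneously satisfy every Kostant-style vanishing constraint of Proposition \ref{mult}, and there is no uniform template for producing them, so each must be built and checked by hand.
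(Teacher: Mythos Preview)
Your outline matches the paper's proof essentially step for step: the case split into imaginary roots (Corollary \ref{ImRootComp}), the $\delta$-reduction (Corollary \ref{redux}), the two $\delta$-maximal cases (Corollary \ref{positivemaximal} for $\gamma\in\mathring{\Phi}^+$; Propositions \ref{oneroot}, \ref{tworoot}, Lemma \ref{lem9.1} for $\beta=\delta-\gamma$), and the exceptional list via additivity plus PRV/explicit vectors.

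Two small corrections are worth noting. First, Table \ref{Exceptional} enumerates \emph{all} $\beta=\delta-\gamma$ with $\beta-2\alpha_i\in\Phi^+$ and $i\neq 0$, not just the residue after Proposition \ref{tworoot}; in fact all of the $B_\ell^{(1)}$ entries and three of the six $F_4^{(1)}$ entries already satisfy $F_\beta=F_{\beta-\alpha_i}$ and are dispatched by Proposition \ref{tworoot}, while $C_\ell^{(1)}$ falls to Proposition \ref{oneroot} via $\rho_\beta(\beta^\vee)=1$. Second, and more substantively, your blanket reduction to $(\rho_\beta,\rho_\beta,\beta)$ fails for $G_2^{(1)}$ with $\gamma=\alpha_2$: there $\beta(\alpha_1^\vee)=3$ while $2\rho_\beta(\alpha_1^\vee)=2$, so $2\rho_\beta-\beta\notin\dom$ and Lemma \ref{Newlemma} does not apply (this is precisely the caveat in its last sentence). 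The paper instead takes the \emph{asymmetric} minimal pair $\lambda=\Lambda_0+\Lambda_1$, $\mu=\Lambda_0+2\Lambda_1$, observes that any Wahl triple for this $\beta$ dominates one of the two orderings of this pair, and then realizes $\lambda+\mu-\beta$ as a PRV component. Both $G_2^{(1)}$ entries are in fact handled by PRV, not by ad hoc vectors; the only genuinely PRV-free explicit construction among the exceptional cases is the single $F_4^{(1)}$ entry $\gamma=\alpha_2$.
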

\newpage

\section{Study of the cohomology $H^*(X_w^\cP, \mathscr{I}_e^2\otimes \mathscr{L}_w(\mu))$}

Let $\g$ be a symmetrizable Kac-Moody Lie algebra with its standard Borel subalgebra $\fb$ and standard Cartan subalgebra $\h$ (\cite{Ku3}, $\S$1.1.2).  Let $\Phi\subset \h^*$ be the set of roots  and $\Phi^+$ (resp. $\Phi^-$) be the subset of positive (resp. negative) roots and let $\Delta =\{\alpha_1, \dots, \alpha_\ell\}\subset \Phi^+$ be the 
subset of simple roots with $\alpha_i^\vee$ the corresponding simple coroots. Let  $\dom$ be the set of dominant integral weights. 
The integrable, highest weight (irreducible) $\mf[g]$-modules are parameterized by $\dom$. For $\lambda \in \dom$, let $V(\lambda)$ be the corresponding integrable, highest weight $\mf[g]$-module. For any suset $S\subset \{1, \dots, \ell\}$, let $\p=\p_S :=\fb \oplus (\oplus_{\alpha\in \Phi^+_S}\g_{-\alpha})$ be the corresponding standard parabolic subalgebra, where $\g_\alpha$ is the root space corresponding to the root $\alpha$ and $\Phi^+_S :=
\Phi^+\cap (\oplus_{k\in S}\,\mathbb{Z}_{\geq 0}\alpha_k)$. Then, 
$$\fu := \oplus_{\alpha\in \Phi^+(S)}\, \g_\alpha$$
is the nil-radical of $\p$, where $\Phi^+(S):= \Phi^+ \setminus \Phi^+_S.$
Let $\G, \B, \cP=\cP_S, \cU, \cH$ be the corresponding `maximal' groups associated to the Lie algebras $\g, \fb, \p, \fu, \h$ respectively (cf. \cite{Ku3}, Section 6.1). Let $W$ be the Weyl group of $\g$ and let $W_\cP'$ be the set of smallest length coset representatives in $W/W_\cP$, where $W_\cP$ is the subgroup of $W$ generated by the simple reflections $\{s_k\}_{k\in S}$. 

 For any pro-algebraic $\cP$-module $M$, by $\cL(M)$ we 
mean the corresponding homogeneous vector bundle (locally free sheaf) on $\X_\cP:=\G/\cP$ associated to the 
principal $\cP$-bundle: $\G \to \G/\cP$ by the representation $M$ of $\cP$ (cf. \cite{Ku3}, Corollary 8.2.5).
For any integral weight $\lambda \in \h^*$, such that $\lambda(\alpha_k^\vee)= 0$ for all $k \in S$, the one 
dimensional $\cH$-module $\mathbb{C}_\lambda$  (given by the character $\lambda$) admits a unique $\cP$-module 
structure (extending the $\cH$-module structure); in particular, we have the 
line bundle $\cL(\C_\lambda)$ on $\X_\cP$. We abbreviate the line bundle $\cL(\C_{-\lambda})$
 by $\cL(\lambda)$ and its restriction to the Schubert variety $X_w^\cP :=\overline{\B w\cP/\cP}$ by $\cL_w(\lambda)$ (for any $w\in W_\cP'$). Given two line bundles  $\cL(\lambda)$ and   $\cL(\mu)$, we can form their 
external tensor product to get the line bundle $\cL(\lambda\boxtimes \mu)$ on $\X_\cP\times \X_\cP$.

A dominant integral weight $\mu$ is called {\it $S$-regular} if $\mu(\alpha_k^\vee) = 0$ if and only if $k\in S$. The set of such weights is denoted by ${\dom_S}^o$.
    With this definition, we have the following:
\begin{Prop}  \label{prop11.1} For any $\mu\in {\dom_S}^o$ and $w\in W_\cP'$ such that $X_w^\cP$ is $\cP$-stable unde the left multiplication (i.e., $w^{-1}\alpha_k\in \Phi_S^+ \sqcup \Phi^-$ for all the simple roots $\alpha_k$ with $k\in S$),
\vskip1ex
    (a) $H^p(X_w^\cP, \cI_e \otimes \cL_w(\mu)) = 0$, for all $p > 0$,
\vskip1ex

    (b) $H^p(X_w^\cP, \cI_e^2 \otimes \cL_w(\mu)) = 0$, for all $p > 0$  , and
\vskip1ex
    (c) $H^0(X_w^\cP, \cI_e^2 \otimes \cL_w(\mu))$ (resp. $H^0(X_w^\cP, (\mathscr{O}_w/\cI_e^2) \otimes \cL_w(\mu))$ 
     is canonically isomorphic (as $\cP$-modules) with $\left(V_w(\mu)/(\hat{T}_e(X_w^\cP) \cdot v_\mu)\right)^*$ (resp.
      $(\hat{T}_e(X_w^\cP)      
       \cdot v_\mu)^*)$, 
     where $\cI_e$ denotes the ideal sheaf of the base point $e = \cP/\cP \subset  X_w^\cP, \mathscr{O}_w$  denotes the 
structure sheaf of $X_w^\cP$, $v_\mu\neq 0\in [V(\mu)]_\mu$ , $V_w(\mu) \subset V(\mu)$ is the Demazure module  (which is the $\fb$-submodule of $V(\mu)$ generated by the extremal weight vector $v_{w\mu}$) and 
$\hat{T}_e(X_w^\cP)      
       \cdot v_\mu := \mathbb{C} v_\mu\oplus T_e(X_w^\cP)\cdot v_\mu$ (here $T_e(X_w^\cP)$ is  the Zariski tangent space of $X_w^\cP$ at $e$).

    (Observe that, since $\mu(\alpha_k^\vee) = 0$ for $k\in S$, $\hat{T}_e(X_w^\cP) \cdot v_\mu  \subset V(\mu)$
     is stable under $\cP$, and also the point $e$ being $\cP$-fixed, $H^0(X_w^\cP, \cI_e^k \otimes \cL_w(\mu))$ and $H^0(X_w^\cP, (\mathscr{O}_w/\cI_e^k) \otimes \cL_w(\mu))$   have canonical $\cP$-module structures for any $k\geq 1$.)
\end{Prop}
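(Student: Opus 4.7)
The plan is to prove parts (a), (b), (c) in sequence, combining long exact sequences in sheaf cohomology with the generalized Borel--Weil theorem for Kac--Moody Schubert varieties (cf.\ \cite[Theorem 8.2.2]{Ku3}): $H^0(X_w^\cP, \cL_w(\mu)) \simeq V_w(\mu)^*$ canonically as $\cP$-modules, and $H^p(X_w^\cP, \cL_w(\mu)) = 0$ for $p > 0$. Under this identification, evaluation at $e$ corresponds to $f \mapsto f(v_\mu)$, after trivializing $\cL_w(\mu)|_e \simeq \C_{-\mu}$.

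For (a), tensor the short exact sequence $0 \to \cI_e \to \cO_w \to \cO_e \to 0$ with $\cL_w(\mu)$, where $\cO_e$ denotes the structure sheaf of the reduced point $e$. Kumar's vanishing immediately gives $H^p(X_w^\cP, \cI_e \otimes \cL_w(\mu)) = 0$ for $p \geq 2$, while the $p = 1$ vanishing reduces to surjectivity of the evaluation map $V_w(\mu)^* \to \C_{-\mu}$, which holds since $v_\mu \in V_w(\mu)$ (the Demazure module, being $\fb$-generated by $v_{w\mu}$, contains the full highest weight line). The kernel calculation yields $H^0(X_w^\cP, \cI_e \otimes \cL_w(\mu)) \simeq (V_w(\mu)/\C v_\mu)^*$.

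Next, for (b) and (c), tensor $0 \to \cI_e^2 \to \cO_w \to \cO_w/\cI_e^2 \to 0$ with $\cL_w(\mu)$. Since $(\cO_w/\cI_e^2) \otimes \cL_w(\mu)$ is supported at $e$, its higher cohomology vanishes. Combined with Kumar's vanishing, one gets $H^p(X_w^\cP, \cI_e^2 \otimes \cL_w(\mu)) = 0$ for $p \geq 2$, and the crucial $p=1$ vanishing reduces to surjectivity of
\[
\Phi : V_w(\mu)^* \longrightarrow H^0(X_w^\cP, (\cO_w/\cI_e^2) \otimes \cL_w(\mu)).
\]
To identify $\Phi$ explicitly, I would work locally near $e$: for $X \in \g$ lifting $\bar X \in T_e(X_w^\cP)$, the Taylor expansion
\[
f(\exp(tX) \cdot e) \;=\; f(v_\mu) \;+\; t\,f(X \cdot v_\mu) \;+\; O(t^2)
\]
(using $v_\mu^*$ to trivialize the fiber) shows the $2$-jet of $f \in V_w(\mu)^*$ at $e$ is captured by the pair $(f(v_\mu),\,[\bar X \mapsto f(X \cdot v_\mu)])$. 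Consequently $\Phi$ is identified with the transpose of the natural inclusion $\hat T_e(X_w^\cP) \cdot v_\mu \hookrightarrow V_w(\mu)$; the containment in $V_w(\mu)$ follows from the fact that $T_e(X_w^\cP)$ is a direct sum of root spaces $\g_{-\beta}$ with $s_\beta \leq w$, so that $X_{-\beta}\cdot v_\mu \in V_{s_\beta}(\mu) \subset V_w(\mu)$.

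The main obstacle is to establish that this inclusion has image of full dimension $1+\dim T_e(X_w^\cP)$, equivalently that the map $T_e(X_w^\cP) \to V(\mu)$, $\bar X \mapsto X \cdot v_\mu$, is injective. Once this is in hand, $\Phi$ is surjective by dimension count, and the exact sequence of $\cP$-modules
\[
0 \to (V_w(\mu)/\hat T_e(X_w^\cP) \cdot v_\mu)^* \to V_w(\mu)^* \xrightarrow{\Phi} (\hat T_e(X_w^\cP) \cdot v_\mu)^* \to 0
\]
simultaneously gives (b) and the $\cP$-equivariant identifications of (c). This is where the $S$-regularity $\mu \in {\dom_S}^o$ enters: $T_e(X_w^\cP) \subset T_e(\X_\cP) \simeq \bigoplus_{\beta \in \Phi^+(S)} \g_{-\beta}$ is a sum of root spaces indexed by $\beta \in \Phi^+(S)$, and for any such $\beta$, its expansion in simple roots contains some $\alpha_i$ with $i \notin S$ with positive coefficient (where $\mu(\alpha_i^\vee) > 0$), so $\mu(\beta^\vee) = 2(\mu,\beta)/(\beta,\beta) > 0$, whence $X_{-\beta}\cdot v_\mu \neq 0$; distinct $\beta$ yield images in distinct weight spaces, giving the required linear independence.
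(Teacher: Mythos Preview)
Your approach is essentially the paper's: the same short exact sequences, the same reduction of (b) to the surjectivity of the $2$-jet map $\Phi$, and the same identification of $\ker\Phi$ with the annihilator of $\hat T_e(X_w^\cP)\cdot v_\mu$, so that everything hinges on the injectivity of $T_e(X_w^\cP)\to V(\mu)$, $X\mapsto X\cdot v_\mu$.

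There is, however, a genuine gap in your injectivity argument. You argue root-by-root that $(\mu,\beta)>0$ and then write $\mu(\beta^\vee)=2(\mu,\beta)/(\beta,\beta)>0$, hence $X_{-\beta}\cdot v_\mu\neq 0$. This is fine for real $\beta$, but $T_e(X_w^\cP)$ is only known to be an $\cH$-stable subspace of $\fu^-=\bigoplus_{\beta\in\Phi^+(S)}\g_{-\beta}$, and $\Phi^+(S)$ contains imaginary roots $p\delta$. For these, $(\beta,\beta)=0$ so $\beta^\vee$ is undefined, the root space $\g_{-p\delta}$ is multi-dimensional, and ``$X_{-\beta}\cdot v_\mu\neq 0$'' together with ``distinct $\beta$ give distinct weight spaces'' does not yield injectivity on $\g_{-p\delta}$. (Relatedly, your description of $T_e(X_w^\cP)$ as the sum of $\g_{-\beta}$ with $s_\beta\leq w$ is a real-root statement and says nothing about possible imaginary directions; in any case it is not needed for the proof.)

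The paper closes this gap uniformly: for any $\beta\in\Phi^+(S)$ and any nonzero $Y_\beta\in\g_{-\beta}$, choose $X_\beta\in\g_\beta$ with $(X_\beta\,|\,Y_\beta)\neq 0$ (possible by nondegeneracy of the invariant form on $\g_\beta\times\g_{-\beta}$); then $X_\beta Y_\beta\cdot v_\mu=(X_\beta\,|\,Y_\beta)(\mu\,|\,\beta)\,v_\mu\neq 0$ since $(\mu\,|\,\beta)>0$ for $\mu\in{\dom_S}^o$. This shows $\theta:\fu^-\to V(\mu)$ is injective, hence so is its restriction $\theta_w$ to $T_e(X_w^\cP)$, without any case analysis on the root type. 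With this correction your argument goes through.
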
\begin{proof} (a) Consider the sheaf exact sequence:
\begin{equation} \label{eqn11.1}
0\to \cI_e\otimes \cL_w(\mu) \to \cL_w(\mu)\to  (\cO_w/\cI_e)\otimes \cL_w(\mu)\to 0.
\end{equation}
Clearly, $H^p(X_w^\cP, (\mathscr{O}_w/\cI_e) \otimes \cL_w(\mu))= H^p(e, \cL(\mu)_{|e})=0$, for all $p > 0$. 
Also, $H^p(X_w^\cP,  \cL_w(\mu))=0$, for $p>0$, and the restriction map:   $H^0(X_w^\cP, \cL_w(\mu))\to 
H^0(e, \cL_w(\mu)_{|e})$ is surjective (cf. \cite{Ku3}, Theorem 8.2.2). So, the long exact cohomology sequence  associated to the 
sheaf sequence \eqref{eqn11.1} gives (a).

\vskip1ex

     (b) Similarly, consider the sheaf exact sequence:
    \begin{equation} \label{eqn11.2}
0\to \cI_e^2\otimes \cL_w(\mu) \to \cL_w(\mu)\to  (\cO_w/\cI_e^2)\otimes \cL_w(\mu)\to 0.
\end{equation} 
     We write a part of the corresponding cohomology exact sequence:
 \begin{align} \label{eqn11.3}
0\to H^0(X_w^\cP, \cI_e^2\otimes \cL_w(\mu)) \to &H^0(X_w^\cP,  \cL_w(\mu))\overset{\pi}\to  H^0(X_w^\cP, (\cO_w/\cI_e^2)\otimes \cL_w(\mu))\notag\\ 
&\to 
H^1(X_w^\cP, \cI_e^2\otimes \cL_w(\mu))\to 0.
\end{align} 
The sheaf exact sequence:
\begin{equation} \label{eqn11.4}
0\to (\cI_e/\cI_e^2)\otimes \cL_w(\mu)\to (\cO_w/\cI^2_e) \otimes \cL_w(\mu) \to (\cO_w/\cI_e)\otimes \cL_w(\mu)  \to 0
\end{equation} 
gives:
\begin{equation} \label{eqn11.5}
0\to T_e(X_w^\cP)^*\otimes \C_{\mu}^*
\to H^0(X_w^\cP, (\cO_w/\cI^2_e) \otimes \cL_w(\mu)) \to  \C_{\mu}^*
 \to 0.
\end{equation} 

Now, we assert that the map $\pi$ is surjective: To prove 
this, observe first  that the map 
$$\theta: \fu^-\to V(\mu), \,\,\,X \mapsto X\cdot v_\mu ,$$
is injective, where $\fu^-$ is the opposite nil-radical of $\p$: For any  root $\beta \in  \Phi^+(S)$, pick  root vectors $X_\beta\in \g_\beta$ and $Y_\beta \in \g_{-\beta}$. Then, 
$$X_\beta Y_\beta\cdot v_\mu= (X_\beta | Y_\beta) (\mu |\beta)v_\mu,\,\,\,\text{by \cite{Ku3}, Theorem 1.5.4}.$$
From the non-degeneracy of $(\cdot | \cdot)$ over $\g$ and the weight consideration, we get the injectivity of $\theta$. (Here we have used the assumption that $\mu \in  {\dom_S}^o$.) 
Since $\pi$ is a $\p$ and hence $\h$-module map, from the exact sequence \eqref{eqn11.5} and the following identification with the dual Demazure module:
$$H^0(X_w^\cP \cL_w(\mu))\simeq V_w(\mu)^*\,\,\,\text{(cf. \cite{Ku3}, Corollary 8.1.26)},
$$
to prove the surjectivity of $\pi$, it suffices to show that $\theta_w: T_e(X_w^\cP) \to V_w(\mu), \,X\mapsto X\cdot v_\mu ,$
is injective, where we identify $T_e(X_w^\cP) \hookrightarrow T_e(X^\cP)\simeq \fu^-$. But, the injectivity of $\theta_w$ clearly follows from the injectivity of $\theta$.

        So, the exact sequence \eqref{eqn11.3} (and the surjectivity of $\pi$) establishes 
the vanishing of $H^1(X_w^\cP, \cI_e^2\otimes \cL_w(\mu))$. Vanishing of $H^p(X_w^\cP, \cI_e^2\otimes \cL_w(\mu))$,
for $p>1$,  
 follows by considering the cohomology exact sequence associated to the sheaf exact sequence:
\begin{equation} \label{eqn11.6}
0\to \cI_e^2\otimes \cL_w(\mu) \to \cI_e\otimes \cL_w(\mu)\to  (\cI_e/\cI_e^2)\otimes \cL_w(\mu)\to 0,
\end{equation} 
and the (a)-part of the proposition. So, (b) follows.

\vskip1ex
        (c) It is easy to see, from the description of the map $\pi$, that
\begin{equation} \label{eqn11.7}
\Ker \pi =\{f\in V_w(\mu)^*: f_{|(\hat{T}_e(X_w^\cP)\cdot v_\mu)}=0\}\simeq \left(V_w(\mu)/(\hat{T}_e(X_w^\cP)\cdot v_\mu)\right)^*.\end{equation} 
So, using the exact sequences \eqref{eqn11.3} and  \eqref{eqn11.5}, (c) follows.                                                               
\end{proof}

From the above proposition together with \cite{Ku3}, Theorem 8.2.2, we immediately get:

     \begin{Cor}   \label{remark11.1} Under the assumptions of Proposition \ref{prop11.1}, $H^p(X_w^\cP, (\cO_w/\cI^k_e)\otimes \cL_w(\mu))= 0$, for all $p > 0$
      and $k = 1, 2$.
      \end{Cor}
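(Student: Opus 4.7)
The plan is to deduce the vanishing directly from the long exact cohomology sequences associated with short exact sequences of sheaves already used in the proof of Proposition \ref{prop11.1}, together with the known vanishing theorem for cohomology of line bundles on Schubert varieties.

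For the case $k=1$, the quotient $\cO_w/\cI_e$ is supported at the single point $e = \cP/\cP$, so $(\cO_w/\cI_e)\otimes \cL_w(\mu)$ is a skyscraper sheaf whose stalk is the fiber of $\cL_w(\mu)$ at $e$. Hence all higher cohomology vanishes trivially. Alternatively, one takes the long exact sequence associated with
\begin{equation*}
0\to \cI_e\otimes \cL_w(\mu) \to \cL_w(\mu)\to (\cO_w/\cI_e)\otimes \cL_w(\mu)\to 0,
\end{equation*}
and applies part (a) of Proposition \ref{prop11.1} together with the vanishing $H^p(X_w^\cP,\cL_w(\mu))=0$ for $p>0$ (\cite{Ku3}, Theorem 8.2.2).

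For $k=2$, I would write the long exact cohomology sequence attached to
\begin{equation*}
0\to \cI_e^2\otimes \cL_w(\mu) \to \cL_w(\mu)\to (\cO_w/\cI_e^2)\otimes \cL_w(\mu)\to 0.
\end{equation*}
For every $p\geq 1$ this produces the fragment
\begin{equation*}
H^p(X_w^\cP,\cL_w(\mu)) \to H^p(X_w^\cP,(\cO_w/\cI_e^2)\otimes \cL_w(\mu)) \to H^{p+1}(X_w^\cP,\cI_e^2\otimes \cL_w(\mu)).
\end{equation*}
The left term vanishes by \cite{Ku3}, Theorem 8.2.2, and the right term vanishes by Proposition \ref{prop11.1}(b). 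So the middle term must vanish as well, giving the desired statement.

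There is no real obstacle: the entire content is a two-line diagram chase once one has Proposition \ref{prop11.1}(b) in hand. The only point of mild care is the case $p=1$, which is already built into the exact sequence \eqref{eqn11.3} used in proving Proposition \ref{prop11.1}(b); there the surjectivity of the map $\pi$ forces $H^1(X_w^\cP,(\cO_w/\cI_e^2)\otimes \cL_w(\mu))=0$ directly, and so no additional work is needed beyond invoking what has already been established.
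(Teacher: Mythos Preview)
Your argument is correct and is exactly what the paper intends: the corollary follows from the long exact sequence attached to $0\to \cI_e^k\otimes \cL_w(\mu)\to \cL_w(\mu)\to (\cO_w/\cI_e^k)\otimes \cL_w(\mu)\to 0$ together with Proposition~\ref{prop11.1}(a),(b) and the vanishing $H^p(X_w^\cP,\cL_w(\mu))=0$ from \cite{Ku3}, Theorem 8.2.2. One small slip in your final paragraph: the surjectivity of $\pi$ in \eqref{eqn11.3} forces $H^1(X_w^\cP,\cI_e^2\otimes \cL_w(\mu))=0$, not $H^1$ of the quotient; but this remark is unnecessary anyway, since your main long exact sequence argument already covers $p=1$ without special treatment.
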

      
    \section{Surjectivity of the Gaussian map}  
      
     We continue to follow the notation and assumptions from Section 11.    For any $w\in W_\cP'$ such that  the Schubert variety $X_w^\cP$ is $\cP$-stable, set
      $$\hat{\X}_w^\cP:= \G\times^{\cP} X_w^\cP.$$ 
      Consider the isomorphism:
      $$\delta:\G\times^\cP \X_\cP \simeq \X_\cP \times \X_\cP,\,\,\,[g, x]\mapsto (g\cP, gx),\,\,\text{for $g\in \G$ and $x\in \X_\cP$}.$$
    We have an embedding   
       $$\hat{\X}_w^\cP \hookrightarrow    \G\times^\cP \X_\cP .$$  
       In particular, we can restrict the line bundle $\cL(\lambda\boxtimes \mu)$ via the above isomorphism $\delta$ to 
      $\hat{\X}_w^\cP$ to get the line bundle denoted   $\cL_w(\lambda\boxtimes \mu)$.      
   \begin{Lem} \label{lem12.1} For $\lambda, \mu \in  {\dom_S}^o, k=1,2, w\in W_\cP'$ as above,   
   and any $p\geq 0$, there are 
canonical isomorphisms:
\begin{equation*}
H^p(\hat{\X}_w^\cP, \hat{\cI}^k_e\otimes \cL_w(\lambda\boxtimes \mu))\simeq
H^p(\X_\cP, \cL(\lambda)\otimes \cL(H^0(X_w^\cP, \cI^k_e\otimes \cL_w(\mu)))),
\end{equation*}
and
\begin{equation*}
H^p(\hat{\X}_w^\cP, (\hat{\cO}_w/\hat{\cI}^k_e)\otimes \cL_w(\lambda\boxtimes \mu))\simeq
H^p(\X_\cP, \cL(\lambda)\otimes \cL(H^0(X_w^\cP, (\cO_w/\cI^k_e) \otimes \cL_w(\mu)))),
\end{equation*}
where $\hat{\cI}_e$ denotes the ideal sheaf of $\widehat{\X}_e^\cP$ in  $ \hat{\X}_w^\cP$
and  $\hat{\cO}_w$ is the structure sheaf of $\hat{\X}_w^\cP$.

Further, 
\begin{equation} \label{eqn12.1}H^p(\hat{\X}_w^\cP, \hat{\cI}_e\otimes \cL_w(\lambda\boxtimes \mu))=0,\,\,\,\text{for all $p>0$}.
\end{equation}

        (We will see later that $H^p(\hat{\X}_w^\cP, \hat{\cI}_e^2\otimes \cL_w(\lambda\boxtimes \mu))=0,$ for all $p>0$.)       
    \end{Lem}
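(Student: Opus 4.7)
The plan is to exploit the natural projection $\pi: \hat{\X}_w^\cP \to \G/\cP$ given by $[g, x] \mapsto g\cP$, which by the $\cP$-stability of $X_w^\cP$ is a Zariski-locally trivial fibration with fiber $X_w^\cP$, realizing $\hat{\X}_w^\cP$ as the bundle associated to the principal $\cP$-bundle $\G \to \G/\cP$. First I would verify that, under the isomorphism $\delta$, the relevant sheaves on $\hat{\X}_w^\cP$ descend from $\cP$-equivariant sheaves on the fiber: explicitly,
$$\cL_w(\lambda \boxtimes \mu) \simeq \pi^*\cL(\lambda) \otimes (\G \times^\cP \cL_w(\mu)),\ \ \hat{\cI}_e^k \simeq \G \times^\cP \cI_e^k,\ \ \hat{\cO}_w/\hat{\cI}_e^k \simeq \G \times^\cP (\cO_w/\cI_e^k).$$
These identifications rely on $\cL_w(\mu)$ being $\cP$-equivariant (since $\mu(\alpha_k^\vee) = 0$ for $k \in S$ makes $\mu$ extend to a $\cP$-character) and on $\{e\} \subset X_w^\cP$ being $\cP$-fixed.

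Second, for any $\cP$-equivariant coherent sheaf $\mathcal{F}$ on $X_w^\cP$, base change for the principal $\cP$-bundle produces a natural isomorphism of $\cP$-equivariant sheaves on $\G/\cP$:
$$R^p\pi_*(\G \times^\cP \mathcal{F}) \simeq \cL(H^p(X_w^\cP, \mathcal{F})).$$
Combined with the projection formula for the line bundle $\pi^*\cL(\lambda)$, this gives $R^p\pi_*(\pi^*\cL(\lambda) \otimes (\G \times^\cP \mathcal{F})) \simeq \cL(\lambda) \otimes \cL(H^p(X_w^\cP, \mathcal{F}))$. Taking $\mathcal{F} = \cI_e^k \otimes \cL_w(\mu)$ and $\mathcal{F} = (\cO_w/\cI_e^k) \otimes \cL_w(\mu)$, Proposition \ref{prop11.1} and Corollary \ref{remark11.1} provide $H^p(X_w^\cP, \mathcal{F}) = 0$ for $p > 0$ and $k = 1, 2$. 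The Leray spectral sequence for $\pi$ therefore degenerates at $E_2$, yielding the two claimed canonical isomorphisms in one stroke.

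For the final vanishing \eqref{eqn12.1}, I would use the sheaf exact sequence
$$0 \to \hat{\cI}_e \otimes \cL_w(\lambda \boxtimes \mu) \to \cL_w(\lambda \boxtimes \mu) \to \cL(\lambda + \mu) \to 0$$
on $\hat{\X}_w^\cP$, where the quotient is the restriction to $\hat{\X}_e^\cP \simeq \G/\cP$ (under $\delta$, $\hat{\X}_e^\cP$ is carried onto the diagonal of $\X_\cP \times \X_\cP$, and $\cL(\lambda) \boxtimes \cL(\mu)$ restricts there to $\cL(\lambda+\mu)$). By \cite{Ku3}, Theorem 8.2.2, $H^p(\G/\cP, \cL(\lambda+\mu)) = 0$ for $p > 0$. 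Applying the above projection formula argument with $\mathcal{F} = \cL_w(\mu)$ (using $H^p(X_w^\cP, \cL_w(\mu)) = 0$ for $p > 0$ and $H^0(X_w^\cP, \cL_w(\mu)) \simeq V_w(\mu)^*$, from \cite{Ku3}, Theorem 8.2.2 and Corollary 8.1.26) gives $H^p(\hat{\X}_w^\cP, \cL_w(\lambda \boxtimes \mu)) \simeq H^p(\G/\cP, \cL(\lambda) \otimes \cL(V_w(\mu)^*))$ for all $p$.

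To conclude, I would need the vanishing $H^p(\G/\cP, \cL(\lambda) \otimes \cL(V_w(\mu)^*)) = 0$ for $p > 0$ together with surjectivity of the restriction map $H^0(\hat{\X}_w^\cP, \cL_w(\lambda \boxtimes \mu)) \twoheadrightarrow H^0(\G/\cP, \cL(\lambda+\mu))$; combined with the vanishing from \cite{Ku3}, Theorem 8.2.2 and the long exact sequence, this yields \eqref{eqn12.1}. The hard step will be establishing the Kempf-type vanishing for the pushforward vector bundle on $\G/\cP$; I expect this follows either from a Bott--Samelson desingularization of $X_w^\cP$ (turning $\hat{\X}_w^\cP$ into a tower of $\mathbb{P}^1$-bundles over $\G/\cP$, where the line bundle cohomology is directly computable by iterated application of the standard $\mathbb{P}^1$ vanishing) or from a direct Mehta--Ramanathan-type vanishing for $\G$-Schubert varieties. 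The surjectivity onto the diagonal sections should follow by exhibiting explicit product lifts of the form $f \otimes g$, or equivalently by noting that the $\cP$-module surjection $\C_{-\lambda} \otimes V_w(\mu)^* \twoheadrightarrow \C_{-\lambda-\mu}$ is split by evaluation at the extremal weight vector.
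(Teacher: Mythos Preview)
Your approach is essentially identical to the paper's: both use the fibration $\hat{\X}_w^\cP \to \X_\cP$, compute the higher direct images via Proposition \ref{prop11.1} and Corollary \ref{remark11.1}, and invoke the degenerate Leray spectral sequence for the first two isomorphisms; for \eqref{eqn12.1}, both use the same short exact sequence. The only difference is that where you sketch a Bott--Samelson argument for the vanishing of $H^p(\hat{\X}_w^\cP, \cL_w(\lambda\boxtimes\mu))$ and the surjectivity onto the diagonal, the paper simply cites \cite{Ku2}, Theorem 2.7 (and \cite{M2}, Lemmas 21, 22), which indeed establish exactly this via the method you outline.
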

    \begin{proof}
        Consider the fibration 
        $$\sigma: \hat{\X}_w^\cP\to   \X_\cP, \,\,\,[g, x]\mapsto      g\cP$$ 
        and the associated Leray spectral sequence. Then, it is easy to see that $R^p\sigma_*(\hat{\cI}_e^k\otimes \cL_w(\lambda\boxtimes \mu))$ is canonically isomorphic with   $ \cL(\lambda)\otimes \cL(H^p(X_w^\cP, \cI^k_e\otimes \cL_w(\mu)))$ for any $p\geq 0$. 
 (A similar statement is true for  $\hat{\cI}_e^k$ replaced by $\hat{\cO}_w/\hat{\cI}_e^k $.)
 In particular, by 
Proposition \ref{prop11.1}, Corollary  \ref{remark11.1} and the (degenerate) Leray spectral sequence, first 
part of the lemma follows.

     To prove the vanishing \eqref{eqn12.1}, use the cohomology exact sequence associated to the sheaf exact sequence:
$$0\to \hat{\cI}_e\otimes \cL_w(\lambda\boxtimes \mu)\to \cL_w(\lambda\boxtimes \mu)\to  (\hat{\cO}_w/\hat{\cI}_e)\otimes \cL_w(\lambda\boxtimes \mu) \to 0,$$ 
and \cite{Ku2}, Theorem 2.7 (also see \cite{M2}, Lemmas 21, 22).
\end{proof}
                                             
     The following lemma is used in proving our crucial Proposition \ref{prop12.1}.

     \begin{Lem} \label{lem12.2} Let $\g$ be an affine Kac-Moody Lie algebra (cf. Section 3). Take  $\lambda, \mu \in  {\dom_S}^o $ and $\theta \in \dom$.    
      Then, if $\Hom_{\p}\left(V(\theta)^\vee, \C_{\lambda}^*\otimes (\g\cdot v_\mu)^\vee\right)$  is 
      nonzero, then  $\theta = \lambda +\mu-\beta$, for some $\beta \in \Phi^+(S)\cup \{0\}$.
     \vskip1ex
      
   If $\beta$ is zero, then the above space is one dimensional.    
   \vskip1ex
       
 If such a $\beta$ is a real root, then the above space is nonzero if and only if $S\subset F_\beta$ (cf Definition \ref{defF}), in which case it is one dimensional.
 \vskip1ex
 
If $\beta$ is an imaginary root, then this space is of dimension $(\dim \g_\beta) - |S|$.
\end{Lem}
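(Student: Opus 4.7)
The strategy is to identify $\Hom_\p(V(\theta)^\vee, M)$, with $M:=\C_\lambda^*\otimes(\g\cdot v_\mu)^\vee$, with an explicit subspace of the weight space $M_{-\theta}$ via evaluation at the lowest weight vector $v_\theta^*\in V(\theta)^\vee$ (of weight $-\theta$). Since $V(\theta)^\vee=U(\fn^+)\cdot v_\theta^*$, the map $\phi\mapsto\phi(v_\theta^*)$ is injective, with image precisely those $m\in M_{-\theta}$ satisfying $\g_{-\alpha_k}\cdot m=0$ for all $k\in S$ (as $\fn^-\cdot v_\theta^*=0$) together with the integrability relations $(e_i)^{\theta(\alpha_i^\vee)+1}m=0$. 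In every case below the integrability will follow automatically by inspecting the $\alpha_i$-root string through $\beta$, so the real work is imposing the $\g_{-\alpha_k}$-annihilation.

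First I would locate $\theta$ by a weight analysis of $\g\cdot v_\mu\subset V(\mu)$, whose weights are $\mu$ and $\mu-\gamma$ for $\gamma\in\Phi^+$ with $\g_{-\gamma}v_\mu\neq 0$. For real $\gamma$ the $\mathfrak{sl}_2$-triple argument yields $\g_{-\gamma}v_\mu\neq 0\iff\mu(\gamma^\vee)>0$; using $\gamma^\vee=\sum_i c_i^\vee\alpha_i^\vee$ with $c_i^\vee=c_i(\alpha_i|\alpha_i)/(\gamma|\gamma)$ (so $c_i^\vee>0\iff c_i>0$ in $\gamma=\sum c_i\alpha_i$) and the $S$-regularity of $\mu$, this forces $\gamma\in\Phi^+(S)$. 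For imaginary $\gamma=k\delta$, the identity $[X\otimes t^k,Y\otimes t^{-k}]v_\mu=k(X|Y)\mu(K)v_\mu$ together with $\mu(K)>0$ (since $S\subset\{1,\dots,\ell\}$) shows that $\mathring{\h}\otimes t^{-k}\to V(\mu)$ is injective, so $\g_{-k\delta}v_\mu$ is $\ell$-dimensional. Matching weights $\theta-\lambda=\mu-\beta$ then forces $\beta\in\{0\}\cup\Phi^+(S)$.

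I would then handle the three resulting cases. For $\beta=0$, $M_{-\theta}=\C\cdot v_{-\lambda}^*\otimes v_\mu^*$ is one-dimensional and both annihilation conditions are automatic by a weight count on each tensor factor. For real $\beta\in\Phi^+(S)$, $M_{-\theta}$ is spanned by $v_{-\lambda}^*\otimes f_\beta^*$ with $f_\beta^*$ dual to $f_\beta v_\mu$; the constraint from $f_k\in\g_{-\alpha_k}$ with $k\in S$ unpacks, via $f_k v_\mu=0$, into the pairing of $f_\beta^*$ with $[f_k,X_{-(\beta-\alpha_k)}]v_\mu\in\g_{-\beta}v_\mu$. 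This is vacuous iff $\beta-\alpha_k\notin\Phi\sqcup\{0\}$, i.e.\ $k\in F_\beta$; otherwise the bracket hits a nonzero element of the one-dimensional space $\g_{-\beta}v_\mu$ by root-string considerations, killing $f_\beta^*$. Hence the Hom is nonzero iff $S\subset F_\beta$, in which case it is one-dimensional.

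The delicate case, where the main obstacle lies, is the imaginary root $\beta=k_0\delta$. Here $M_{-\theta}$ has dimension $\ell$ and must be reduced to dimension $\ell-|S|$. For each $k\in S$, using $\g_{-(k_0\delta-\alpha_k)}=\C\cdot(e_k\otimes t^{-k_0})$, the constraint $f_k\cdot m=0$ reduces, via the key affine bracket identity
\begin{equation*}
[f_k,\,e_k\otimes t^{-k_0}]=[f_k,e_k]\otimes t^{-k_0}=-\alpha_k^\vee\otimes t^{-k_0},
\end{equation*}
to the condition that $f^*\in(\g_{-k_0\delta}v_\mu)^*$ vanish on $\xi_k:=(\alpha_k^\vee\otimes t^{-k_0})v_\mu$. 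The vectors $\{\xi_k\}_{k\in S}$ are linearly independent: by the injectivity of $\mathring{\h}\otimes t^{-k_0}\to V(\mu)$ noted above, this reduces to linear independence of the simple coroots $\{\alpha_k^\vee\}_{k\in S}$ in $\mathring{\h}$, which holds since $S\subset\{1,\dots,\ell\}$. Hence $|S|$ independent conditions on the $\ell$-dimensional space $(\g_{-k_0\delta}v_\mu)^*$ leave a kernel of dimension $\ell-|S|=\dim\g_\beta-|S|$. The crux of the entire argument is this linear-independence computation, which ultimately rests on the explicit affine bracket identity and on the non-degeneracy supplied by $\mu(K)>0$.
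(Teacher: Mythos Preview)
Your approach is correct in spirit and closely parallels the paper's, but there is a genuine gap and a notable difference in execution worth flagging.

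\textbf{Comparison with the paper.} The paper's key simplification is to use the invariant form on $\g$ to identify $(\g\cdot v_\mu)^\vee \simeq (\fu\oplus(\Ker\mu)^\perp)\otimes\C_\mu^*$ as $\p$-modules. Under this identification, your evaluation map lands in $[\fu\oplus(\Ker\mu)^\perp]_{\lambda+\mu-\theta}$, and the annihilation and integrability constraints become the adjoint conditions $(\ad f_k)v=0$ for $k\in S$ and $(\ad e_i)^{\theta(\alpha_i^\vee)+1}v=0$. This is materially the same as what you do in the dual picture, but it streamlines every verification: the real-root case reduces immediately to $F_\beta\supset S$, the integrability for $i\notin S$ becomes the single root-string bound $(\beta+(\theta(\alpha_i^\vee)+1)\alpha_i)(\alpha_i^\vee)\geq 4$ (this is precisely where the affine hypothesis enters, via Bourbaki's bound on root-string lengths), and the imaginary case becomes the transparent condition $\alpha_k(h)=0$ on $h\otimes t^p\in\g_{p\delta}$. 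Your direct dual computation is equivalent but requires unpacking Leibniz expansions and the contragredient action at each step; the invariant-form trick is worth knowing.

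\textbf{The gap.} You assume $S\subset\{1,\dots,\ell\}$ twice (for $\mu(K)>0$ and for the linear independence of $\{\alpha_k^\vee\}_{k\in S}$ in $\mathring{\h}$), but the lemma allows $S$ to be an arbitrary subset of the affine simple roots, so $0\in S$ is permitted. Your bracket identity $[f_k,e_k\otimes t^{-k_0}]=-\alpha_k^\vee\otimes t^{-k_0}$ is only valid for $k\neq 0$; for $k=0$ one has $f_0=X_\theta\otimes t^{-1}$ and the relevant space is $\g_{\alpha_0-k_0\delta}=\C\cdot(X_{-\theta}\otimes t^{1-k_0})$, whose bracket with $f_0$ yields a nonzero multiple of $\theta^\vee\otimes t^{-k_0}$. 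The paper handles this uniformly by declaring that, on $\mathring{\h}$, the condition from $k=0$ is $\alpha_0(h)=0$ with $\alpha_0$ interpreted as $-\theta$. Linear independence of the resulting functionals $\{\alpha_k|_{\mathring{\h}}\}_{k\in S}$ on $\mathring{\h}$ then holds provided $S$ is a proper subset of $\{0,\dots,\ell\}$, which is forced by $\mu\in{\dom_S}^o$ (else $\mu(K)=0$ and $V(\mu)$ is one-dimensional, making the statement vacuous). With this modification your argument goes through; as written it does not cover the full statement.
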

\begin{proof} It is easy to see that, as a $\p$-module,
$$ \g\cdot v_\mu \simeq (\g/\Ann v_\mu)\otimes \C_\mu,$$
where $\Ann v_\mu$ is the annihilator of $v_\mu$ in $\g$ and the right side is given the 
tensor product $\p$-module structure.  Hence, using the nondegenerate invariant  form $(. | .)$ on $\g$, we 
get:
\begin{equation}\label{eqn12.2}
(\g\cdot v_\mu)^\vee \simeq (\fu \oplus (\Ker \mu)^\perp) \otimes \C_{\mu}^*\,,
\end{equation}
where $\Ker \mu:= \{h\in \h : \mu(h)) = 0\}$, 
$(\Ker \mu)^\perp :=\{h\in \h: (h| \Ker \mu)=0\}$ and $\fu \oplus (\Ker \mu)^\perp$ is 
a $\p$-module under the adjoint representation. By the presentation of $V(\theta)^\vee$ as a $U(\fb)$-module
as in the proof of Proposition \ref{mult}, 
 we get:
\begin{align*}
&\Hom_\fb\left(V(\theta)^\vee, \C_{\lambda}^*\otimes (\g\cdot v_\mu)^\vee\right)
\simeq
 \Hom_\fb\left(V(\theta)^\vee, \C_{(\lambda +\mu)}^*\otimes (\fu \oplus (\Ker \mu)^\perp)\right),\,\,\text{by}\, \eqref{eqn12.2}\\
&\simeq \{v\in [\fu \oplus (\Ker \mu)^\perp]_{\lambda+\mu-\theta}: (\ad e_i)^{\theta(\alpha_i^\vee)+1}v = 0\}, 
\,\, \text{for all the simple roots\, $\alpha_i$},
 \end{align*}             
  where $e_i$ is a root vector corresponding to the simple root $\alpha_i$. It is easy to see, from the above description, that
\begin{align}\label{eqn12.3}
\Hom&_\p\left(V(\theta)^\vee, \C_{\lambda}^*\otimes (\g\cdot v_\mu)^\vee\right)\simeq 
\{v\in [\fu \oplus (\Ker \mu)^\perp]_{\lambda
+\mu-\theta}: (\ad e_i)^{\theta(\alpha_i^\vee)+1}v = 0,  \\ \notag
& \,\,\text{for all the simple roots $\alpha_i$ and $(\ad f_k)v=0$ for all $k\in S$}\},
 \end{align}  
 where $f_k$ is a root vector corresponding to the negative simple root     $-\alpha_k$.     
Clearly,  the following 
conditions (1) and (2) are both satisfied if the right side of \eqref{eqn12.3} is nonzero:
\vskip1ex

    (1) $\lambda +\mu -\theta =\beta$, for some $\beta \in \Phi^+(S)\cup \{0\}$, and  
    (2) If $\beta \neq 0$ is real, $F_\beta \supset S$.
   \vskip1ex
    
    Conversely, for any dominant $\theta$ which satisfies the conditions (1) 
and (2) as above, the right side of \eqref{eqn12.3} (and hence 
$\Hom_\p\left(V(\theta)^\vee, \C_{\lambda}^*\otimes (\g\cdot v_\mu)^\vee\right)$
 is one dimensional if either $\beta=0$ or $\beta$ is a real root:
 
 \vskip1ex
 
    If $\beta =0$, $(\ad e_i)^{\theta(\alpha_i^\vee)+1}((\Ker \mu)^\perp)=0 $ for all $i \notin S$,   
      since $\lambda, \mu \in  {\dom_S}^o $. Moreover,  $\ad e_k  ((\Ker \mu)^\perp) = \ad f_k   ((\Ker \mu)^\perp)=0$    
    for all $k \in S$ (since $\mu(\alpha_k^\vee)=0$).
      
    So,  assume now that $\beta\in \Phi^+(S)$ is a real root and pick $0\neq X_\beta\in \g_\beta$.  By the above condition (2), $(\ad f_k) X_\beta = 0$, for $k\in S$. So, by the $sl(2)$-module theory, $\beta(\alpha_k^\vee)\leq 0$ and 
    $(\ad e_k)^{1-\beta(\alpha_k^\vee)}X_\beta =0$. In particular,  $(\ad e_k)^{1+\theta(\alpha_k^\vee)}X_\beta =0.$   For $i\notin S$, we again claim that $(\ad e_i)^{\theta(\alpha_i^\vee)+1}X_\beta =0$, since $\beta + (\theta(\alpha_i^\vee)+1)\alpha_i\notin \Phi^+$: We have 
    $$    (\beta + (\theta(\alpha_i^\vee)+1)\alpha_i)  (\alpha_i^\vee)= (\lambda +\mu +\theta )(\alpha_i^\vee)+ 2\geq 4 \,,$$
    since  $\lambda, \mu \in  {\dom_S}^o $ and $\theta \in \dom$.
    But, this contradicts \cite{Bou},  Page 278, Fact 6. (This is where we use the assumption on $\g$ being an affine Kac-Moody Lie algebra.)
    This completes the proof of the lemma in the case $\beta$ is either zero or a real root.                          

Assume now that $\beta = \lambda +\mu -\theta$ is an imaginary root $p\delta, p>0$. For any $h\in \h$ take $v(h)=h\otimes t^p\in \g_{p\delta}$. Then, for any $k\in S$,
$$(\ad e_k) v(h) = (\ad f_k) v(h) =0 \,\,\text{if and only if}\,\, \alpha_k(h)= 0,$$
where $\alpha_0$ is interpreted as the negative of the highest root. Moreover, for $i\notin S$,
\begin{equation} \label{eqn11.15} (\ad e_i)^{(\lambda +\mu-p\delta)(\alpha_i^\vee) +1} v(h)=0,\,\,\,\text{since $\lambda, \mu \in  {\dom_S}^o$ and 
$p\delta(\alpha_i^\vee) = 0$}.
\end{equation}
Thus, by the equation \eqref{eqn12.3}, for $\theta = \lambda +\mu-p\delta$,
\begin{equation}\label{eqn12.4}
\Hom_\p\left(V(\theta)^\vee, \C_{\lambda}^*\otimes (\g\cdot v_\mu)^\vee\right)\simeq 
\{h\otimes t^p: \alpha_k(h) = 0 \ \forall  k\in S\}.
\end{equation}
In particular, it has dimension $\dim \g_{p\delta} - |S|$.
\end{proof}
\begin{Lem} \label{lem12.3} Let the notation and assumptions be as in Lemma \ref{lem12.2} and let $M$ be a $\p$-module quotient 
$\pi: R \twoheadrightarrow M$, where $R:=  \C_{\lambda}^*\otimes (\g\cdot v_\mu)^\vee$. Then, for any $\theta \in \dom$, the canonical map 
\begin{equation} \label{eqn12.5}
\hat{\pi}_M: \Hom_\p\left(V(\theta)^\vee, R\right) \to  \Hom_\p\left(V(\theta)^\vee, M\right)
\end{equation}
is surjective. 
\end{Lem}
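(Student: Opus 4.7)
The plan is to identify both Hom spaces with explicit weight-space data exactly as in the proof of Lemma \ref{lem12.2}, and then lift case by case. Writing $V(\theta)^\vee$ as $U(\fn)\cdot v_{-\theta}$ modulo the relations $e_i^{\theta(\alpha_i^\vee)+1} v_{-\theta} = 0$, and using that $f_i v_{-\theta} = 0$ for every $i$, for any $\p$-module $N$ the assignment $\phi \mapsto \phi(v_{-\theta})$ gives
\[
\Hom_\p(V(\theta)^\vee, N) \simeq \{w \in N_{-\theta} : e_i^{\theta(\alpha_i^\vee)+1} w = 0 \text{ for all } i,\ f_k w = 0 \text{ for all } k \in S\},
\]
so $\hat\pi_M$ is just the restriction of the surjection $R_{-\theta} \twoheadrightarrow M_{-\theta}$ to these subspaces. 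By \eqref{eqn12.2}, $R \simeq \C_{\lambda+\mu}^*\otimes Q$ with $Q := \fu \oplus (\Ker\mu)^\perp$; since $\C_{\lambda+\mu}^*$ is a one-dimensional $\p$-module, every $\p$-submodule of $R$ has the form $\C_{\lambda+\mu}^*\otimes Q'$, yielding $M \simeq \C_{\lambda+\mu}^*\otimes(Q/Q')$. Setting $\gamma := \lambda+\mu-\theta$, Lemma \ref{lem12.2} makes the problem trivial unless $\gamma \in \Phi^+(S) \cup \{0\}$, and in those cases the arguments there (using $\lambda,\mu \in {\dom_S}^o$ and the affine hypothesis) show that $(\ad e_i)^{\theta(\alpha_i^\vee)+1} v = 0$ holds automatically on every $v \in Q_\gamma$ satisfying $(\ad f_k) v = 0$ for $k \in S$. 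Thus the task is: given $\bar v \in (Q/Q')_\gamma$ satisfying the $(\ad f_k)$-condition in $Q/Q'$, produce a lift $v \in Q_\gamma$ satisfying the corresponding condition in $Q$.

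For $\beta = 0$, $Q_0 = (\Ker\mu)^\perp$ is one-dimensional, spanned by $\nu^{-1}(\mu)$, and the relation $\mu(\alpha_k^\vee) = 0$ for $k \in S$ shows $(\ad f_k)\nu^{-1}(\mu) = 0$ automatically, so any lift works. For $\beta \in \Phi^+(S)$ real, $Q_\beta = \g_\beta = \C X_\beta$ is one-dimensional; if $\bar v \neq 0$ (so $X_\beta \notin Q'$) and $(\ad f_k) X_\beta \in Q'$ were nonzero for some $k \in S$, then applying $\ad e_k \in \p$ together with the Jacobi identity
\[
[e_k,[f_k,X_\beta]] = \beta(\alpha_k^\vee) X_\beta + [f_k,[e_k,X_\beta]],
\]
whose right-hand side is a nonzero multiple of $X_\beta$ whenever $[f_k,X_\beta]\neq 0$ (by the standard $\mathfrak{sl}_2$-string computation), would force $X_\beta \in Q'$, contradicting the assumption. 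Hence $(\ad f_k) X_\beta = 0$ for all $k \in S$, and $v = c X_\beta$ (with $c$ chosen to match $\bar v$) provides the lift.

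The imaginary case $\beta = p\delta$ is the principal technical step: $Q_{p\delta} = \mathring{\h}\otimes t^p$, and the lifting reduces to solving a linear system on $\mathring{\h}$. Let $T \subset S$ be the set of those $k$ for which the obstruction element ($f_k\otimes t^p$ for $k \neq 0$, and $e_\theta\otimes t^{p-1}$ for $k = 0$) lies in $Q'$; then the hypothesis on $\bar v = \overline{h\otimes t^p}$ gives $\alpha_k(h) = 0$ for $k \in S\setminus T$ (with the convention $\alpha_0|_{\mathring{\h}} = -\theta$). Applying $\ad e_k$ (resp.\ $\ad(f_\theta\otimes t)$ for $k=0$) to the obstruction places the coroot $\alpha_k^\vee|_{\mathring{\h}}$ into $\h'_p := \{h \in \mathring{\h} : h\otimes t^p \in Q'\}$ for each $k \in T$, and applying $\ad f_j$ (or its $k=0$ analogue) for $j \in S\setminus T$ to these coroots forces $\alpha_j(\alpha_k^\vee|_{\mathring{\h}}) = 0$, so the off-diagonal block of the affine Cartan matrix between $T$ and $S\setminus T$ vanishes. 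The remaining $T\times T$-system is then a proper principal submatrix of the affine Cartan matrix; since $T \subsetneq \{0,1,\dots,\ell\}$ it is of finite type and therefore invertible, producing the required correction $h'' \in \Span\{\alpha_k^\vee : k \in T\} \subset \h'_p$ with $\alpha_k(h + h'') = 0$ for all $k \in S$. This invertibility of proper principal submatrices of the affine Cartan matrix is the key technical ingredient and the main obstacle of the proof.
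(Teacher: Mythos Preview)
Your proof is correct and follows essentially the same three-case strategy as the paper: trivially for $\beta=0$, an $\mathfrak{sl}_2$-string argument for real $\beta$, and a Cartan-submatrix computation for imaginary $\beta$. The only cosmetic difference is that the paper, in the imaginary case, works with the set $S':=\{k\in S:\pi(\alpha_k^\vee\otimes t^p)=0\}$ and projects $h$ onto $\mathring{\h}(S'):=\{x:\alpha_k(x)=0\ \forall k\in S'\}$ via the direct sum decomposition $\mathring{\h}=(\oplus_{k\in S'}\C\alpha_k^\vee)\oplus\mathring{\h}(S')$, whereas you work with your set $T\subset S'$ and add a correction in $\Span\{\alpha_k^\vee:k\in T\}$; both routes rest on the same two facts you isolate explicitly (the off-diagonal Cartan vanishing between the two parts of $S$, and the invertibility of a proper principal submatrix of the affine Cartan matrix), which the paper leaves as ``it is easy to see.''
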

\begin{proof} By the proof of Lemma \ref{lem12.2}, both the sides of \eqref{eqn12.5} are zero unless $\theta =\lambda +\mu -\beta$, for some $\beta \in \Phi^+(S)\cup \{0\}$. We identify $  \Hom_\p\left(V(\theta)^\vee, R\right) $ with the right side of \eqref{eqn12.3} and a similar identification for  $  \Hom_\p\left(V(\theta)^\vee, M\right)$. To prove the lemma, we consider the three cases separately:

\vskip1ex

\noindent
{\bf Case I, $\beta =0$:} In this case  $  \Hom_\p\left(V(\theta)^\vee, R\right)\simeq (\Ker \mu)^\perp$ (cf. Proof of Lemma \ref{lem12.2}) and thus the lemma follows easily in this case.

\vskip1ex

\noindent
{\bf Case II,  $\beta$ is a real root:} In this case, following the proof of Lemma \ref{lem12.2},
\begin{align*}   \Hom_\p\left(V(\theta)^\vee, R\right)&\simeq \g_\beta,\,\,\,\text{if $F_\beta \supset S$}\\
&= 0, \,\,\,\text{otherwise}.
\end{align*}
Identifying $R$ with $\fu\oplus (\Ker \mu)^\perp$ via the equation \eqref{eqn12.2}, it suffices to prove that if $0\neq \pi(\g_\beta) \subset   \Hom_\p\left(V(\theta)^\vee, M\right)$, then $F_\beta \supset S$. Since $0\neq \pi(\g_\beta) \subset   \Hom_\p\left(V(\theta)^\vee, M\right)$, we get that, for all $k\in S$, $[f_k, \pi(\g_\beta)] = 0$, which implies $\beta (\alpha_k^\vee)\leq 0$. Assune, if possible,  that $F_\beta \not\supset S$, i.e., for some $k\in S, \beta -\alpha_k\in \Phi^+$ (it can not be zero since $\beta \in \Phi^+(S)$). Then, $[e_k, \g_{\beta - \alpha_k}]$ is nonzero and hence, $\g_\beta$ being one dimensional,  $[e_k, \g_{\beta - \alpha_k}] = \g_\beta$ giving rise to 
 $[e_k, \pi(\g_{\beta - \alpha_k})] \neq 0$. From this, by the $sl_2$-module theory, we easily see that $[f_k, \pi(\g_{\beta})] \neq 0$. A contradiction (to the equation \eqref{eqn12.3}) since $\pi (\g_\beta) \subset 
\Hom_\p\left(V(\theta)^\vee, M\right)$. Thus, $F_\beta \supset S$. From this the lemma follows in this case. 
\vskip1ex

\noindent
{\bf Case III,  $\beta$ is an imaginary root $p\delta$ for $p>0$:} Take $h\otimes t^p$ (for $h\in \h$) such that 
$\pi (h\otimes t^p) \subset \Hom_\p\left(V(\theta)^\vee, M\right)$. Define the subset $S'\subset S$ by
$$S':= \{k\in S: \pi (\alpha_k^\vee \otimes t^p)=0\},$$
where we set $\alpha_0^\vee = -\theta^\vee$ for the highest root $\theta$  of $\mathring{\g}$. Consider the decomposition 
\begin{equation} \label{eqn12.6} \mathring{\h}= \left(\oplus_{k\in S'}\,\C \alpha_k^\vee\right) \oplus \mathring{\h}(S'),
\end{equation}
where $\mathring{\h}(S') := \{x\in \mathring{\h}: \alpha_k(x)=0 \ \forall k\in S'\}.$
Let $h'$ be the projection of $h$ in the $\mathring{\h}(S')$-factor (under the above decomposition). Then, from the definition of $S'$, we see that $\pi (h\otimes t^p) = \pi (h'\otimes t^p).$ Moreover, it is easy to see that $
h'\otimes t^p \in   \Hom_\p\left(V(\theta)^\vee, R\right)$, i.e., 
$$(\ad e_k)(h'\otimes t^p) = (\ad f_k)(h'\otimes t^p) =0,\,\,\,\text{for all $k\in S$}$$
 (cf. the equation  \eqref{eqn11.15}).
 \end{proof}
 We prove the 
following crucial result towards the proof of  Theorem \ref{thm12.1}. We follow the notation as in 
Lemma \ref{lem12.1}.

\begin{Prop} \label{prop12.1} Let $\g$ be an affine Kac-Moody Lie algebra (cf. Section 3). Then, for any   $\lambda, \mu \in  {\dom_S}^o $ (where $S$ is an arbitrary subset of the simple roots of $\g$) and  any $w\in W_\cP'$ such that  the Schubert variety $X_w^\cP$ is $\cP$-stable, consider the following two conditions:
  \vskip1ex
  
    (a) $H^1(\hat{\X}_w^\cP, \hat{\cI}_e^2\otimes \cL_w(\lambda\boxtimes \mu)) = 0.$
    \vskip1ex
    
    (b) For all the real roots $\beta \in \Phi^+(S)$, satisfying $F_\beta \supset S$ and $\lambda +\mu -\beta \in \dom$
    (in particular, $(\lambda, \mu, \beta)$ is a Wahl triple), there exists a 
        $f_\beta \in \Hom_\fb\left(\C_{\lambda +\mu -\beta}\otimes V(\lambda)^\vee, V(\mu)\right)$ such that 
        \begin{equation} \label{eqn12.4new} X_\beta(f_\beta(\C_{\lambda +\mu -\beta}\otimes  v_\lambda^*))\neq 0,\,\,\text{for $X_\beta\neq 0 \in \g_\beta$},\,\,\,\text{
        where $v_\lambda^* \neq 0\in [V(\lambda)^\vee]_{-\lambda}$}.
        \end{equation}
        
        Then, the condition (b) implies the condition (a). 
        \end{Prop}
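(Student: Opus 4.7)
The plan is as follows. I first use the short exact sheaf sequence
$$0 \to \hat{\cI}_e^2 \otimes \cL_w(\lambda\boxtimes\mu) \to \hat{\cI}_e \otimes \cL_w(\lambda\boxtimes\mu) \to (\hat{\cI}_e/\hat{\cI}_e^2) \otimes \cL_w(\lambda\boxtimes\mu) \to 0$$
and the vanishing \eqref{eqn12.1} from Lemma \ref{lem12.1} to identify $H^1(\hat{\X}_w^\cP, \hat{\cI}_e^2\otimes \cL_w(\lambda\boxtimes\mu))$ with the cokernel of the Gaussian map
$$G: H^0(\hat{\X}_w^\cP, \hat{\cI}_e\otimes \cL_w(\lambda\boxtimes\mu)) \longrightarrow H^0(\hat{\X}_w^\cP, (\hat{\cI}_e/\hat{\cI}_e^2)\otimes \cL_w(\lambda\boxtimes\mu)).$$
Thus condition (a) is equivalent to the surjectivity of $G$. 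Pushing down along the fibration $\sigma: \hat{\X}_w^\cP \to \X_\cP$, Lemma \ref{lem12.1} together with Proposition \ref{prop11.1}(c) identifies $G$ with a $\G$-equivariant map
$$G: H^0(\X_\cP, \cL(\lambda)\otimes \cL(M_w)) \longrightarrow H^0(\X_\cP, \cL(\lambda)\otimes \cL(N_w)),$$
where $M_w := (V_w(\mu)/\C v_\mu)^*$ and $N_w := (T_e(X_w^\cP)\cdot v_\mu)^*$ are $\p$-modules, and $G$ is induced by the dual of the $\p$-module inclusion $T_e(X_w^\cP)\cdot v_\mu \hookrightarrow V_w(\mu)/\C v_\mu$ (well-defined because $X_w^\cP$ is assumed $\cP$-stable).

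It now suffices to verify surjectivity of $G$ on each $\G$-isotypic component of the target. By the Kac-Moody Borel-Weil theorem (\cite{Ku2}, Theorem 2.7) combined with Lemma \ref{lem12.2} applied to the $\p$-module quotient $\C_\lambda^*\otimes (\g\cdot v_\mu)^\vee \twoheadrightarrow \C_\lambda^*\otimes N_w$ (which makes sense as $T_e(X_w^\cP)\cdot v_\mu \subset \g\cdot v_\mu$ is a $\p$-submodule), the only $V(\theta)^\vee$-components of the target correspond to $\theta = \lambda + \mu - \beta$ with $\beta \in \Phi^+(S)\sqcup\{0\}$; for real $\beta$ this additionally requires $F_\beta \supset S$ and $\lambda + \mu - \beta \in \dom$, i.e., that $(\lambda,\mu,\beta)$ is a Wahl triple, and the multiplicity is one. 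The case $\beta = 0$ is tautological; the imaginary case is handled directly by the explicit Cartan-type description in \eqref{eqn12.4}, whose elements descend through the quotient $M_w \twoheadrightarrow N_w$ because they represent Cartan-torus directions already present in $V_w(\mu)/\C v_\mu$.

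The decisive case is $\beta$ a real root. Condition (b) supplies $f_\beta$ such that the weight vector $v_\beta := f_\beta(\C_{\lambda+\mu-\beta}\otimes v_\lambda^*) \in V(\mu)_{\mu-\beta}$ satisfies $X_\beta \cdot v_\beta \neq 0$; equivalently, $v_\beta$ has nonzero component along $\g_{-\beta}\cdot v_\mu$. Since $F_\beta \supset S$ and $X_w^\cP$ is $\cP$-stable, the root space $\g_{-\beta}$ lies in $T_e(X_w^\cP)$, so $\g_{-\beta}\cdot v_\mu$ is a nonzero weight line in $T_e(X_w^\cP)\cdot v_\mu$, spanning the relevant weight space dual to the generator of the one-dimensional $V(\theta)^\vee$-component of the target. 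Via Frobenius reciprocity identifying the $V(\theta)^\vee$-multiplicity of $H^0(\X_\cP, \cL(\lambda)\otimes \cL(-))$ with the Hom space of Lemma \ref{lem12.2}, $f_\beta$ determines a class in the source whose $G$-image is a nonzero element of this component. Lemma \ref{lem12.3}, applied to $R = \C_\lambda^*\otimes(\g\cdot v_\mu)^\vee$ surjecting onto $\C_\lambda^*\otimes N_w$, guarantees this correspondence is canonical and compatible with the decomposition. Combining the three cases yields surjectivity of $G$, hence (a).

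The main obstacle will be cleanly bridging the representation-theoretic datum of condition (b) — an $\fb$-homomorphism into all of $V(\mu)$ — with the geometric requirement that $f_\beta$ produces a section of $\cL(\lambda)\otimes\cL(M_w)$ (where $M_w$ involves only the Demazure submodule $V_w(\mu)$) with the correct nonzero image in the target. This requires verifying that the weight $\mu - \beta$ lies in the Demazure support of $V_w(\mu)$ under the hypothesis $w^{-1}\alpha_k \in \Phi_S^+\sqcup\Phi^-$ for $k\in S$, and that the section constructed from $f_\beta$ pairs nontrivially with the one-dimensional $V(\theta)^\vee$-piece of the target precisely because of the non-vanishing $X_\beta\cdot v_\beta \neq 0$.
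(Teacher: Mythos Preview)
Your overall strategy matches the paper's: reduce (a) to surjectivity of a restriction map on $H^0$, push down along $\sigma$, decompose by $\G$-isotypic pieces using \cite{M1}, and check surjectivity $\theta$-by-$\theta$ via $\Hom_\p(V(\theta)^\vee,-)$. The slight variant of using the sequence $0\to\hat{\cI}_e^2\to\hat{\cI}_e\to\hat{\cI}_e/\hat{\cI}_e^2\to 0$ rather than $0\to\hat{\cI}_e^2\to\hat{\cO}_w\to\hat{\cO}_w/\hat{\cI}_e^2\to 0$ is harmless. However, the argument has two genuine gaps.

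\textbf{The claim $\g_{-\beta}\subset T_e(X_w^\cP)$ is false.} You write ``Since $F_\beta\supset S$ and $X_w^\cP$ is $\cP$-stable, the root space $\g_{-\beta}$ lies in $T_e(X_w^\cP)$.'' Take $w=e$: then $T_e(X_e^\cP)=0$, yet there are many real $\beta\in\Phi^+(S)$ with $F_\beta\supset S$. The hypotheses on $\beta$ and $\cP$-stability do not force $\g_{-\beta}$ into the tangent space of an arbitrary $\cP$-stable Schubert variety. The paper never asserts this; instead it first proves surjectivity of the \emph{full} map
\[
\gamma_\theta:\Hom_\p\!\left(V(\theta)^\vee,\ \C_\lambda^*\otimes V(\mu)^\vee\right)\longrightarrow \Hom_\p\!\left(V(\theta)^\vee,\ \C_\lambda^*\otimes(\g\cdot v_\mu)^\vee\right),
\]
and then descends to $\gamma_\theta^w$ via the commutative square whose right vertical $\hat{\pi}_M$ is surjective by Lemma~\ref{lem12.3}. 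If $\g_{-\beta}\not\subset T_e(X_w^\cP)$, the target component is simply zero and there is nothing to prove; you never need to locate $\g_{-\beta}$ inside the tangent space. Your suggested fix in the last paragraph --- checking that $\mu-\beta$ lies in the Demazure support of $V_w(\mu)$ --- is neither necessary nor how the paper proceeds.

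\textbf{The bridge from (b) to surjectivity is not established.} You assert that $X_\beta\cdot v_\beta\neq 0$ is ``equivalent'' to $v_\beta$ having nonzero component along $\g_{-\beta}\cdot v_\mu$, and that this produces a nonzero image under $G$. This equivalence is correct but not automatic: it needs the positive-definite Hermitian form $\{\cdot\,|\,\cdot\}$ on $V(\mu)$ satisfying $\{Xv\,|\,w\}=\{v\,|\,\sigma(X)w\}$, so that the orthogonal complement of $\ker(X_\beta:V(\mu)_{\mu-\beta}\to\C v_\mu)$ is precisely $\C X_{-\beta}v_\mu$. The paper carries this out explicitly, passing through the (non-linear) bijection $\gamma_2$ built from orthogonal projection and a chain of equivalences ending at $X_\beta(f_\beta(\C_\theta\otimes v_\lambda^*))\neq 0$. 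Without this, you have not shown that the class determined by $f_\beta$ maps nontrivially.

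Finally, your treatment of the imaginary case (``Cartan-torus directions already present'') is too vague. The paper constructs, for each $h\in\mathring{\h}$ with $\alpha_k(h)=0$ for $k\in S$, the explicit element $x_h=\bigl(-\tfrac{h\otimes t^p}{p\mu(K)}\bigr)\cdot v_\mu^*\in V(\mu)^\vee$, verifies the annihilation conditions \eqref{eqn12.8}, and checks it restricts to $(h\otimes t^p)\otimes v_\mu^*$ under \eqref{eqn12.2}. Something of this sort is needed; Lemma~\ref{lem12.3} alone handles the passage from $(\g\cdot v_\mu)^\vee$ to $N_w$, not the lifting from $(\g\cdot v_\mu)^\vee$ to $V(\mu)^\vee$.
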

  \begin{proof}      
  Consider the sheaf exact sequence over $\hat{\X}_w^\cP$:
\begin{equation}\label{eqn12.14}
0\to \hat{\cI}^2_e\otimes \cL_w(\lambda\boxtimes \mu)\to \cL_w(\lambda\boxtimes \mu)\to  (\hat{\cO}_w/\hat{\cI}_e^2)\otimes \cL_w(\lambda\boxtimes \mu) \to 0.
\end{equation}

From the cohomology exact sequence associated to the above sheaf exact sequence and the vanishing of 
 $H^1( \hat{\X}^\cP_w,  \cL_w(\lambda\boxtimes \mu))$ (cf. \cite{Ku2}, Theorem 2.7), we get that (a) is equivalent to the surjectivity of the canonical map 
 $$\tau_w: H^0( \hat{\X}_w^\cP,  \cL_w(\lambda\boxtimes \mu))\to H^0( \hat{\X}_w^\cP,  (\hat{\cO}_w/\hat{\cI}_e^2) \otimes \cL_w(\lambda\boxtimes \mu)).$$
 
   By Lemma \ref{lem12.1}  and Proposition \ref{prop11.1} we get: 
\begin{equation*} 
H^0(\hat{\X}_w^\cP, (\hat{\cO}_w/\hat{\cI}^2_e) \otimes \cL_w(\lambda\boxtimes \mu))
\simeq H^0(\X_\cP, \cL(\lambda)\otimes \cL((\hat{T}_e(X_w^\cP)\cdot v_\mu)^*)).
\end{equation*}
Further, by \cite{Ku3}, Theorem 8.2.2,
\begin{equation*}
H^0(\hat{\X}_w^\cP,  \cL_w(\lambda\boxtimes \mu))\simeq H^0( \X_\cP,  \cL(\lambda)\otimes \cL(V_w(\mu)^*)).
\end{equation*}
      Transporting the map $\tau_w$ under these identifications, we get the map
          $$\tilde{\tau}_w: H^0( \X_\cP,  \cL(\lambda)\otimes \cL(V_w(\mu)^*))\to H^0(\X_\cP, \cL(\lambda)\otimes \cL((\hat{T}_e(X_w^\cP)\cdot v_\mu)^*)),
          $$
          which is induced from the canonical restriction: $ \cL(V_w(\mu)^*)\to \cL((\hat{T}_e(X_w^\cP)\cdot v_\mu)^*)$.       
          
   By \cite{M1}, Proposition 12, for any finite dimensional pro-algebraic $\cP$-module $M$, we have a 
canonical $\G$-module isomorphism:
$$H^0(\X_\cP, \cL(M))\simeq \oplus_{\theta \in \dom}\,V(\theta)^*\otimes [V(\theta)\otimes M]^\p,$$
where $\G$ acts trivially on $ [V(\theta)\otimes M]^\p$. 
So, the surjectivity of the map $\tilde{\tau}_w$  (and hence of $\tau_w$) is equivalent to the surjectivity 
of the canonical restriction maps
$$\gamma_\theta^w: \Hom_\p\left(V(\theta)^\vee, \C_{\lambda}^*\otimes V_w(\mu)^*\right)\to 
 \Hom_\p\left(V(\theta)^\vee, \C_{\lambda}^*\otimes (\hat{T}_e(X_w^\cP)\cdot v_\mu)^* 
 \right)$$
 for all $\theta \in \dom$.

For $\theta =\lambda+\mu$, the map $\gamma^w_\theta$ is clearly nonzero and hence surjective by Lemmas 
\ref{lem12.2} and \ref{lem12.3}.

 Making use of Lemmas \ref{lem12.2} and \ref{lem12.3} again, we can assume
 (for the surjectivity of $\gamma_\theta^w$)
 that $\theta$ is of the form $\theta = \lambda+\mu -\beta$ for  $\beta \in \Phi^+(S)$ and if $\beta$ is a real root, then $F_\beta\supset S$. 
 
 We first analyze the surjectivity of 
 $$\gamma_\theta: \Hom_\p\left(V(\theta)^\vee, \C_{\lambda}^*\otimes V(\mu)^\vee\right)\to 
 \Hom_\p\left(V(\theta)^\vee, \C_{\lambda}^*\otimes (\g\cdot v_\mu)^\vee 
 \right).$$ 
 Choose a positive definite Hermitian form $\{ \cdot | \cdot \}$ on $V(\lambda)$  (and 
$V(\mu)$) satisfying 
$$\{Xv| w\} = \{v| \sigma(X)w\}, \,\,\text{for $v, w \in V(\lambda)$  and $X \in \g$},$$
where $\sigma$  is a conjugate-linear involution of $\g$ which takes $\g_\alpha$  to $\g_{-\alpha}$ for 
any root $\alpha$  (cf. \cite{Ku3}, Theorem 2.3.13). Now, set the tensor product form (again denoted by) $\{ \cdot | \cdot
\}$ on $V(\lambda) \otimes V(\mu)$.

   Consider the following diagram (for $\theta =\lambda +\mu-\beta$):
   \[
\xymatrix{
\Hom_\g(V(\theta), V(\lambda)\otimes V(\mu))\ar[d]_\simeq^{\gamma_1}\ar[r]_\simeq^{\gamma_2} & \Hom_\g(V(\lambda)\otimes V(\mu), V(\theta))
\ar[r]_\simeq^{\gamma_3} & \Hom_\p(\C_{\lambda}\otimes V(\mu), V(\theta))
\ar[d]^{\hat{\gamma}_\theta}\\
\Hom_\fb(\C_{\theta}\otimes V(\lambda)^\vee, V(\mu))
 &  &  \Hom_\p(\C_{\lambda}\otimes (\g\cdot v_\mu), V(\theta))\,,
}
\]
where $\gamma_1$ and $ \gamma_3$ are the canonical restriction maps, which are isomorphisms (as can be easily seen).  We now describe the map $\gamma_2$: For any nonzero $f\in \Hom_\g(V(\theta), V(\lambda)\otimes V(\mu))$, write $V(\lambda) \otimes V(\mu) = \Image f \oplus (\Image f)^\perp$ and set $\gamma_2(f)$ as the projection on the first factor (identifying it with $V(\theta)$ under $f$). It is easy to see that $\gamma_2$ is a bijection (though non-linear). Further, $\hat{\gamma}_\theta$ is the canonical restriction map. 

We identify 
$$  \Hom_\p\left(\C_{\lambda}\otimes V(\mu), V(\theta)\right)
\simeq  \Hom_\p\left(V(\theta)^\vee, \C_\lambda^*\otimes V(\mu)^\vee\right)$$
and 
$$  \Hom_\p\left(\C_{\lambda}\otimes (\g\cdot v_\mu), V(\theta)\right)
\simeq  \Hom_\p\left(V(\theta)^\vee, \C_\lambda^*\otimes (\g\cdot v_\mu)
^\vee\right).$$

 We first take $\theta  = \lambda +\mu -\beta$ for a real root $\beta \in \Phi^+(S)$ such that $S \subset F_\beta$. 
Following through the various isomorphisms in the above diagram, it can be seen that (denoting $\fn^+ =[\fb, \fb]$ and $\fn^-$ is the opposite subalgebra to $\fn$)
\begin{align*}
\hat{\gamma}_\theta & \text{ is  nonzero (and hence surjective since the range has dimension $1$)} \\
& \Leftrightarrow \,\text{there exists a $\fb$-morphism $f=f_\beta: \C_\theta\otimes V(\lambda)^\vee \to V(\mu)$}\\ 
& \,\,\,\,\,\, \,\,\,\,\,\text{such that $\{(\gamma_1^{-1}f) V(\theta)| \C_\lambda\otimes (\g\cdot v_\mu)\}\neq 0$}\\
& \Leftrightarrow \{U(\mathfrak{n}^-)((\gamma_1^{-1}f) \C_\theta) | \C_\lambda\otimes (\g\cdot v_\mu)\}\neq 0\\
& \Leftrightarrow \{(\gamma_1^{-1}f) \C_\theta | \C_\lambda\otimes (\g\cdot v_\mu)\}\neq 0\,\,\text{by the invariance of}\\
&\,\,\,\,\,\, \,\,\,\,\,\,\{\cdot | \cdot\}; \,\text{since $\C_\lambda\otimes (\g \cdot v_\mu)$ is $U(\mathfrak{n})$-stable}\\
& \Leftrightarrow \{f (\C_\theta\otimes v_\lambda^*) | \g\cdot v_\mu\}\neq 0 \\
& \Leftrightarrow \{f (\C_\theta\otimes v_\lambda^*) | X_{-\beta}\cdot v_\mu\}\neq 0, \,\text{for $0\neq X_{-\beta}\in \g_{-\beta}$} \\
& \Leftrightarrow \{X_\beta(f (\C_\theta\otimes v_\lambda^*)) | v_\mu\}\neq 0, \,\text{by the invariance of $\{\cdot | \cdot\}$} \\
& \Leftrightarrow X_\beta(f (\C_\theta\otimes v_\lambda^*))\neq 0.
\end{align*}
For $\beta = 0$, $\hat{\gamma}_\theta$ is clearly surjective. 

So, let us now take $\beta$ to be an imaginary root $p\delta$, for $p>0$:

Recall from the isomorphism \eqref{eqn12.4} that, under the identification \eqref{eqn12.2}, 
$$ \Hom_\p\left(V(\theta)^\vee, \C_\lambda^*\otimes (\g\cdot v_\mu)^\vee\right) \simeq \{h\otimes t^p: h\in \h\,\text{and}\, \alpha_k(h)=0 \ \forall k\in S\}.$$
We next show that, under the restriction map 
$$V(\mu)^\vee \to (\g \cdot v_\mu)^\vee\simeq (\fu \oplus (\Ker \mu)^\perp)\otimes \C_\mu^*,
$$ for any $h\in \h$, the element $x_h :=  (-\frac{h\otimes t^p}{p\mu(K)})\cdot v_\mu^*$
\begin{equation} \label{eqn12.7} x_h \mapsto (h\otimes t^p)\otimes v_\mu^*\,.
\end{equation}
To see this, take any $h'\in \h$. Then, 
\begin{align*} x_h(h'\otimes t^{-p})\cdot v_\mu)&= v_\mu^*\left(\left[\frac{h\otimes t^p}{p\mu(K)}, h'\otimes t^{-p}\right]\cdot v_\mu\right)\\
&=(h|h').
\end{align*}
Similarly, 
\begin{align*} \left((h\otimes t^p)\otimes v_\mu^*\right)((h'\otimes t^{-p})\cdot v_\mu)&= (h\otimes t^p| h'\otimes t^{-p})
\\
&=(h|h').
\end{align*}
This proves \eqref{eqn12.7}. Now, for any $h\in \h$ such that $\alpha_k(h)=0$ for all $k\in S$, the element 
$x_h$ satisfies 
\begin{equation}\label{eqn12.8} e_i^{\theta(\alpha_i^\vee)+1}\cdot x_h=0\, \forall i,\,\,\,\text{and}\,\, f_k\cdot x_h =0 \,\forall k\in S.
\end{equation}
To prove this, observe that (in $V(\mu)^\vee$)
\begin{align*} 
e_i^{\theta(\alpha_i^\vee)+1}\cdot (h\otimes t^p)\cdot v_\mu^* &=\sum_{j=0}^{\theta(\alpha_i^\vee)+1} \,\begin{pmatrix}\theta(\alpha_i^\vee)+1\\j \end{pmatrix} \left((\ad e_i)^j (h\otimes t^p)\right)\cdot (e_i^{\theta(\alpha_i^\vee)+1-j}\cdot v_\mu^*),\\
& \,\,\,\,\,\,\,\,\,\,\,\, \text{cf. the proof of \cite{Ku3}, Lemma 1.3.3}\\
&=0,\,\,\,\text{since $(\ad e_i)^2 (h\otimes t^p)=0$ and $e_i^{\mu(\alpha_i^\vee) +1}\cdot v_\mu^* =0$},
\end{align*}
by the presentation of $V(\mu)^\vee$ as a $\fb$-module given in the proof of Proposition \ref{mult}. Further, for $k\in S$,
\begin{align*} 
f_k\cdot (h\otimes t^p)\cdot v_\mu^* &= [f_k, h\otimes t^p]\cdot v_\mu^*
\\
&=0,\,\,\,\text{since $\alpha_k(h)=0$ by assumption.}
\end{align*}
Thus, $v_\theta^* \mapsto x_h$ extends uniquely to a $\p$-module morphism $ V(\theta)^\vee \to \C_\lambda^*\otimes V(\mu)^\vee$. This proves the surjectivity of the restriction map 
$$\gamma_\theta: \Hom_\p\left(V(\theta)^\vee, \C_\lambda^*\otimes V(\mu)^\vee\right) \to \Hom_\p\left(V(\theta)^\vee, \C_\lambda^*\otimes (\g\cdot v_\mu)^\vee\right)$$
for $\theta = \lambda+\mu-p\delta$.

Consider the commutative diagram for any $\theta\in \dom$ of the form $\theta = \lambda+ \mu -\beta$, for $\beta \in \Phi^+(S)\cup \{0\}$ and $w\in W_\cP'$ such that $X_w^\cP$ is $\cP$-stable:

 \[
\xymatrix{
\Hom_\p\left(V(\theta)^\vee, \C_\lambda^*\otimes V(\mu)^\vee\right)\ar[d] \ar[r]^{\gamma_\theta} & \Hom_\p\left(V(\theta)^\vee, \C_\lambda^*\otimes (\g\cdot v_\mu)^\vee\right)
\ar[d]^{\hat{\pi}_M}\\
\Hom_\p\left(V(\theta)^\vee, \C_\lambda^*\otimes V_w(\mu)^*\right)
  \ar[r]_{\gamma_\theta^w} 
  &  \Hom_\p\left(V(\theta)^\vee, \C_{\lambda}^*\otimes (\hat{T}_e(X_w^\cP)\cdot v_\mu)^*\right),
}
\]
where the vertical maps are the canonical restriction maps and $M := \C_{\lambda}^*\otimes (\hat{T}_e(X_w^\cP)\cdot v_\mu)^*$. By Lemma \ref{lem12.3}, $\hat{\pi}_M$ is surjective. Further, by the above proof, $\gamma_\theta$ 
is surjective for all $\theta = \lambda+ \mu -\beta$ such that $\beta =0$ or an imaginary root. Moreover, as proved above, for a real root $\beta \in \Phi^+(S)$ with $F_\beta \supset S$,
$\gamma_\theta$ is surjective if  and only if there exists a $\fb$-morphism $f_\beta: \C_\theta\otimes V(\lambda)^\vee \to V(\mu)$ such that $X_\beta(f_\beta(\C_\theta \otimes v_\lambda^*))\neq 0$.

By Lemma \ref{lem12.2}, for any real root $\beta \in \Phi^+(S)$ satisfying  $F_\beta \not\supset S$, we have
$\Hom_\p\left(V(\theta)^\vee, \C_\lambda^*\otimes (\g\cdot v_\mu)^\vee\right) =0$. In particular, $\gamma_\theta^w$ is surjective for $\beta =0$ or an imaginary root or a real root  $\beta \in \Phi^+(S)$ such that $F_\beta \not\supset S$ and it is surjective for a real root $\beta \in \Phi^+(S)$ with $F_\beta \supset S$ if the condition (b) of the proposition is satisfied for $\beta$. 
Thus, $\tilde{\tau}_w$
(equivalently $\tau_w$)  is surjective  if the condition (b) of the proposition is satisfied. This proves the proposition.
 \end{proof}

\begin{Lem} \label{lem12.5} Let $(\lambda, \mu, \beta)$ be a Wahl triple for a real root $\beta$ and let $V(\lambda+\mu-\beta) \subset V(\lambda) \otimes V(\mu)$ be a $\delta$-maximal root component. Observe that $\beta \in \mathring{\Phi}^+$ or $\beta=\delta-\gamma$ for $\gamma \in \mathring{\Phi}^+$. Then, 
\begin{enumerate}
\item If $\beta \in \mathring{\Phi}^+$, the validity of condition (b) of Proposition \ref{prop12.1} for $V(\lambda+\mu-\beta)$
(i.e., the validity of \eqref{eqn12.4new}) 
 implies its validity for $V(\lambda+\mu-\beta-k\delta)$ for any $k \geq 0$. 

\item If $\beta=\delta-\gamma$ for $\gamma \in \mathring{\Phi}^+$, then we have (with notation as introduced below in the proof , see the identity \eqref{eqn12.6.10}) 
$$
\begin{aligned}
(l+m+h^\vee) X_{-\gamma}(k+1)\cdot &v_\lambda^*(L_{-k}w)
=l(k+1)X_{-\gamma}(1)\cdot w_{\mu-\beta} \\
-&(\lambda_{|\mathring{\h}}, \gamma_{|\mathring{\h}})X_{-\gamma}(1)\cdot w_{\mu-\beta}+\sum_{\substack{\beta_i \in \mathring{\Phi}^+ :\\ \beta_i+\gamma \in \mathring{\Phi}^+}}[X_{-\gamma}(1), X_{-\beta_i}]X_{\beta_i}\cdot w_{\mu-\beta}.
\end{aligned}
$$
\end{enumerate}
\end{Lem}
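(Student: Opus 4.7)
The plan is to construct the homomorphism $f_{\beta+k\delta}$ required by condition~(b) directly from $f_\beta$ using the GKO Virasoro operator $L_{-k}$, and to verify the non-vanishing (respectively the claimed identity in part~(2)) by an explicit computation with the Sugawara formula.

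First I set up the transport: let $w \in V(\lambda)\otimes V(\mu)$ be a $\g$-highest weight vector of a $V(\lambda+\mu-\beta)$-summand, so that under the identification in the proof of Proposition~\ref{mult} the corresponding $f_\beta$ satisfies
$$f_\beta(1_{\lambda+\mu-\beta}\otimes v_\lambda^*)\;=\;v_\lambda^*(w)\;=:\;w_{\mu-\beta}\in V(\mu)_{\mu-\beta},$$
where $v_\lambda^*$ acts as partial evaluation on the first tensor factor. Condition~(b) for $\beta$ thus reads $X_\beta\cdot w_{\mu-\beta}\neq 0$. Since $(\lambda,\mu,\beta)$ is a Wahl triple with $\beta$ real, Propositions~\ref{positive} and~\ref{L0}(1), together with Lemma~\ref{kshift}, imply that $L_{-k}w$ is a nonzero $\g$-highest weight vector of a $V(\lambda+\mu-\beta-k\delta)$-summand for every $k\geq 0$. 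The associated $f_{\beta+k\delta}$ is then $f_{\beta+k\delta}(1\otimes v_\lambda^*)=v_\lambda^*(L_{-k}w)$, so condition~(b) for $\beta+k\delta$ becomes the assertion $X_{\beta+k\delta}\cdot v_\lambda^*(L_{-k}w)\neq 0$.

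Next I would expand $L_{-k}$ via the GKO Sugawara formula
$$L_{-k}\;=\;\frac{1}{2(l+m+h^\vee)}\sum_a\sum_{n\in\Z}{:}u^{a,\mathrm{diag}}(-k-n)\,u^{\mathrm{diag}}_a(n){:}\;-\;L^{(\lambda)}_{-k}\otimes 1\;-\;1\otimes L^{(\mu)}_{-k},$$
with $\{u_a\}$, $\{u^a\}$ dual bases of $\mathring{\g}$, and push the partial evaluation $v_\lambda^*$ and the affine current $X_{\beta+k\delta}$ through. Because the output must land in $V(\mu)_\mu=\C v_\mu$, the annihilation of $v_\mu$ by positive current modes together with the weight profile of $v_\lambda^*\in[V(\lambda)^\vee]_{-\lambda}$ kills all but finitely many summands. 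In part~(1), where $\beta\in\mathring{\Phi}^+$ and $X_{\beta+k\delta}=X_\beta(k)$, one uses the diagonal invariance $[X_\beta^{\mathrm{diag}}(k),L_{-k}]=0$ (Proposition~\ref{GKO}(3)) together with $X_\beta^{\mathrm{diag}}(k)w=0$ to reduce $X_\beta(k)\cdot v_\lambda^*(L_{-k}w)$ to $-v_\lambda^*(X_\beta^{(1)}(k)L_{-k}w)$, which the surviving Sugawara terms collapse into an explicit nonzero scalar multiple of $X_\beta\cdot w_{\mu-\beta}$; non-vanishing is then immediate. In part~(2), where $\beta=\delta-\gamma$ and $X_{\beta+k\delta}=X_{-\gamma}(k+1)$, the same mechanism produces three classes of surviving terms, which (after clearing the denominator $l+m+h^\vee$) reassemble as the right-hand side of~\eqref{eqn12.6.10}: the level contribution $l(k+1)X_{-\gamma}(1)\cdot w_{\mu-\beta}$ arises from the central-extension piece $n\delta_{n+m,0}(X|Y)_{\mathring{\g}}K$ of~\eqref{bracket} firing inside the Sugawara sum; the Cartan correction $-(\lambda_{|\mathring{\h}},\gamma_{|\mathring{\h}})X_{-\gamma}(1)\cdot w_{\mu-\beta}$ comes from the zero-mode piece of the Sugawara, passed through $v_\lambda^*$ acting on $v_\lambda$; and the root cross-term sum is produced by the nontrivial double commutators $[X_{-\gamma}(k+1),{:}u^a(-k-n)u_a(n){:}]$ indexed exactly by those $\beta_i\in\mathring{\Phi}^+$ for which $\beta_i+\gamma\in\mathring{\Phi}^+$.

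The main obstacle will be the extended bookkeeping in this Sugawara expansion: matching normal-ordering conventions across the three pieces of $L_{-k}$, verifying that the contributions from $L^{(\lambda)}_{-k}\otimes 1$ and $1\otimes L^{(\mu)}_{-k}$ exactly cancel all but the enumerated terms (in particular, that the $m$-dependent central contributions from $1\otimes L^{(\mu)}_{-k}$ drop out after projection to $V(\mu)_\mu$), and confirming that the correct root index set appears in part~(2). The reason this calculation should close is that $L_{-k}$ commutes with the $\g'$-action, so its nontrivial behavior on a $\g'$-highest weight vector is governed by only a finite number of current modes once one restricts to the one-dimensional target $V(\mu)_\mu$.
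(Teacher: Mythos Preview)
Your overall strategy matches the paper's: transport the highest-weight vector $w$ by the GKO operator $L_{-k}$, set $f_{\beta+k\delta}(1\otimes v_\lambda^*)=v_\lambda^*(L_{-k}w)$, and evaluate $X_{\beta+k\delta}\cdot v_\lambda^*(L_{-k}w)$ by expanding the Sugawara/GKO formula. The paper works with the KRR form of the GKO operator (their Proposition~10.3), which reads $(l+m+h^\vee)L_{-k}=m(L_{-k}^{\g}\otimes 1)+l(1\otimes L_{-k}^{\g})-\sum_{j,i}u_i(-j)\otimes u^i(j-k)$ rather than your coset form $L_{-k}^{\mathrm{diag}}-L_{-k}^{(\lambda)}\otimes 1-1\otimes L_{-k}^{(\mu)}$; the two are equivalent, but your expectation that ``the $m$-dependent central contributions from $1\otimes L_{-k}^{(\mu)}$ drop out'' is off---in the paper's bookkeeping both a $km$ term and an $lk$ term survive in part~(1), and an $l(k+1)$ term in part~(2).

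There is one genuine gap in your outline for part~(1). You assert that the surviving terms collapse to a \emph{nonzero} scalar multiple of $X_\beta\cdot w_{\mu-\beta}$, but this non-vanishing is not automatic. After the expansion the answer contains, besides the manifestly non-negative pieces $lk$, $km$, $(\lambda_{|\mathring{\h}},\beta)$, $(\mu,\beta)$, a sum of cross-terms
\[
\sum_{\substack{\beta_i\in\mathring{\Phi}^+:\\ \beta-\beta_i\in\mathring{\Phi}^+}}[X_\beta,X_{-\beta_i}]\,X_{\beta_i}\cdot w_{\mu-\beta}.
\]
Each summand lands in $V(\mu)_\mu=\C\,w_\mu$ by weight, but its coefficient has no obvious sign. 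The paper closes this by pairing $\beta_i$ with $\beta-\beta_i$ and proving the operator identity
\[
[X_\beta,X_{-\beta_i}]\,X_{\beta_i}+[X_\beta,X_{-(\beta-\beta_i)}]\,X_{\beta-\beta_i}=d_{\beta_i}\,X_\beta,\qquad d_{\beta_i}\in\Z_{>0},
\]
the strict positivity coming from an $sl_2$-string argument applied to $[[X_\beta,X_{-\beta_i}],X_{\beta_i}]$. Without this step (or an equivalent one) the total scalar could in principle vanish, and part~(1) does not close. Your diagonal-commutativity trick $X_\beta(k)\cdot v_\lambda^*(L_{-k}w)=-v_\lambda^*((X_\beta(k)\otimes 1)L_{-k}w)$ is correct but does not bypass the issue: the same cross-terms reappear when you push $X_\beta(k)\otimes 1$ through the Sugawara sum. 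For part~(2) your outline is fine; there the goal is only to establish the displayed identity, and the three classes of surviving terms you describe are exactly those the paper records in~\eqref{eqn12.6.10}.
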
 
\begin{proof}

Let $w \in V(\lambda+\mu-\beta) \subset V(\lambda) \otimes V(\mu)$ be the highest weight vector. Then, $L_{-k}(w)$ is a highest weight vector in $V(\lambda) \otimes V(\mu)$ with weight $\lambda+\mu-\beta-k\delta$ (cf. \S 4 together with Proposition 7.2).  Let $l$, $m$ be the central charges of $V(\lambda)$ and $V(\mu)$, respectively, and $h^\vee$ the dual Coxeter number of $\mf[g]$. Write 
$$
w=v_\lambda \otimes w_{\mu-\beta} + \sum_{q \in Q^+ \backslash \{0\}} v^\clubsuit_{\lambda-q} \otimes w^\clubsuit_{\mu-\beta+q},
$$
where $v^\clubsuit_{\lambda-q}$ is a basis of weight vectors of $V(\lambda)$ of weight $\lambda-q$, $w^\clubsuit_{\mu-\beta+q} \in V(\mu)_{\mu-\beta+q}$ (possibly zero), and $Q^+:= \bigoplus_{j=0}^\ell \Z_{\geq 0 } \alpha_j$. Observe that $v^\clubsuit_{\lambda-q}$ occurs as many times (possibly zero) as the dimension of the weight space $V(\lambda)_{\lambda-q}$. 

Denote by $v^\ast_\lambda: V(\lambda) \otimes V(\mu) \to V(\mu)$ the contraction via the highest weight vector $v_\lambda$. Then, by the proof of Proposition \ref{mult}, we have that $v^\ast_\lambda(w)$, for $w$ the highest weight vector as above, satisfies the conditions $e_i^{\lambda(\alpha_i^\vee)+1} \cdot (v^\ast_\lambda(w)) =0$ for all simple roots $\alpha_i$. 

By \cite{KRR}, Proposition 10.3, for any $k >0$, the GKO operator (cf. Proposition 4.5): 
\begin{multline*}
(l+m+h^\vee)L_{-k}(w)=m(L_{-k}^{\mf[g]} \otimes Id)(w) + l v_\lambda \otimes (L_{-k}^{\mf[g]} w_{\mu-\beta}) + l \sum_{q \neq 0} v^\clubsuit_{\lambda-q} \otimes (L_{-k}^{\mf[g]} w^\clubsuit_{\mu-\beta+q}) \\
 -\sum_{j \in \Z, i} (u_i(-j) \cdot v_\lambda) \otimes (u^i(j-k) \cdot w_{\mu-\beta}) - \sum_{j \in \Z, \,i} \sum_{ 0 \neq q \leq \beta} (u_i(-j) \cdot v^\clubsuit_{\lambda-q}) \otimes (u^i(j-k) \cdot w^\clubsuit_{\mu-\beta+q}),
\end{multline*}
where $\{u_i\}$ is a basis of $\mathring{\mf[g]}$, $\{u^i\}$ is its dual basis, and $L_{-k}^{\mf[g]}$ is the Virasoro operator of $\mf[g]$ (cf \cite{KRR}, Corollary 10.1). Now, take a root basis $\{X_{\beta_i}\}$ of $\mathring{\mf[n]}^+ :=\oplus_{\alpha\in \mathring{\Phi}^+}\,\mathring{\g}_\alpha$ and the dual basis $\{X_{-\beta_i}\}$ of $\mathring{\mf[n]}^- :=\oplus_{\alpha\in \mathring{\Phi}^+}\,\mathring{\g}_{-\alpha}$, and a basis $\{h_j\}$ of $\mathring{\mf[h]}$ and its dual basis $\{h^j\}$, with respect to the normalized form. That is, $(X_{\beta_i}, X_{-\beta_j})=\delta_{ij}$, $(h_i, h^j)=\delta_{ij}$. Then, we have 
\begin{align} \label{eqn12.5.1}
(l+m+h^\vee) v_\lambda^\ast (L_{-k} (w)) = l \ L_{-k}^{\mf[g]} (w_{\mu-\beta}) &- \sum_{i} v_\lambda^\ast (u_i \cdot v_\lambda)(u^i(-k) \cdot w_{\mu-\beta}) \notag\\
&- \sum_{i, 0 \neq q \leq \beta} v_\lambda^\ast (u_i \cdot v^\clubsuit_{\lambda-q})(u^i(-k) \cdot w^\clubsuit_{\mu-\beta+q}).
\end{align}
Now, we consider the two cases separately. First, assume $\beta \in \mathring{\Phi}^+$. Then, for $X_\beta(k):= X_\beta \otimes t^k$, we have by the equation \eqref{eqn12.5.1}:
\begin{align}\label{eqn12.5.new}
&(l+m+h^\vee)X_\beta(k) \cdot v_\lambda^\ast(L_{-k}(w)) = lkX_\beta \cdot w_{\mu-\beta} +\sum_{j} \lambda(h_j) \beta(h^j) X_\beta \cdot w_{\mu-\beta} \notag\\
&-\sum_{\substack{\beta_i \in \mathring{\Phi}^+ :\\   \beta-\beta_i \in \mathring{\Phi}^+}} v_\lambda^\ast ( X_{\beta_i} \cdot v^\clubsuit_{\lambda-\beta_i}) [X_\beta, X_{-\beta_i}] \cdot w^\clubsuit_{\mu-\beta+\beta_i} 
-v_\lambda^\ast(X_\beta \cdot v^\clubsuit_{\lambda-\beta})((\mu,\beta)+km)w^\clubsuit_\mu, \notag\\
&\,\,\,\,\,\,\,\,\,\,\,\,\,\,\,\text{by \cite{KRR}, Corollary 10.1, Identity 10.13(b)}  \notag\\
&= lkX_\beta \cdot w_{\mu-\beta}+(\lambda_{|\mathring{\mf[h]}}, \beta_{|\mathring{\mf[h]}}) X_\beta \cdot w_{\mu-\beta} - \sum_{\substack{\beta_i \in \mathring{\Phi}^+: \\   \beta-\beta_i \in \mathring{\Phi}^+}} v^\ast_\lambda( X_{\beta_i} \cdot v^\clubsuit_{\lambda-{\beta_i}}) [X_\beta, X_{-\beta_i}] \cdot w^\clubsuit_{\mu-\beta+\beta_i} \notag\\
&-v^\ast_{\lambda}(X_\beta \cdot v^\clubsuit_{\lambda-\beta})((\mu,\beta)+km)w^\clubsuit_\mu.
\end{align}

For any root $\beta_i \in \mathring{\Phi}^+$, choose a basis of $V(\lambda)_{\lambda-\beta_i}$ consisting of $X_{-\beta_i}.v_\lambda$ (which we will denote simply by $v^\circ_{\lambda-\beta_i}$) and any basis $v^{\clubsuit '}_{\lambda-\beta_i}$ of $V(\lambda)_{\lambda-\beta_i}$ annihilated by $X_{\beta_i}$. With such a basis, the above equation \eqref{eqn12.5.new} becomes:

\begin{multline} \label{eqn12.5.2}
(l+m+h^\vee)X_{\beta}(k) \cdot v^\ast_\lambda(L_{-k}(w)) = lkX_\beta \cdot w_{\mu-\beta}+(\lambda_{|\mathring{\mf[h]}}, \beta_{|\mathring{\mf[h]}})X_\beta \cdot w_{\mu-\beta} \\ 
-\sum_{\substack{\beta_i \in \mathring{\Phi}^+:\\   \beta-\beta_i \in \mathring{\Phi}^+}} v^\ast_\lambda(X_{\beta_i}X_{-\beta_i} \cdot v_\lambda)[X_\beta, X_{-\beta_i}] \cdot w^\circ_{\mu-\beta+\beta_i} 
-v^\ast_\lambda(X_\beta X_{-\beta} \cdot v_\lambda)((\mu,\beta)+km)w^\circ_{\mu}.
\end{multline}
Since $X_{\beta_i}$ annihilates $w$, we get 
\begin{equation} \label{eqn12.5.3}
-X_{\beta_i} \cdot w_{\mu-\beta} = (\lambda, \beta_i) w^\circ_{\mu-\beta+\beta_i}.
\end{equation}

Combining the equations \eqref{eqn12.5.2} and \eqref{eqn12.5.3}, we get taking $X_\beta \cdot w_{\mu-\beta}=w_\mu$ (which is possible by the assumption that condition (b) of Proposition \ref{prop12.1} is valid for the $\delta$-maximal component $V(\lambda+\mu-\beta)$):
\begin{multline} \label{eqn12.5.7}
(l+m+h^\vee)X_{\beta}(k) \cdot v^\ast_\lambda(L_{-k}(w)) = lkw_\mu + (\lambda_{|\mathring{\mf[h]}}, \beta_{|\mathring{\mf[h]}})w_\mu \\
+\sum_{\substack{\beta_i \in \mathring{\Phi}^+: \\   \beta-\beta_i \in \mathring{\Phi}^+}} [X_\beta, X_{-\beta_i}]X_{\beta_i} \cdot w_{\mu-\beta} + ((\mu,\beta)+km)w_\mu.
\end{multline}
We next claim that for any $\beta_i \in \mathring{\Phi}^+$ such that $\beta-\beta_i \in \mathring{\Phi}^+$, 
\begin{equation} \label{eqn12.5.6}
[X_\beta, X_{-\beta_i}]X_{\beta_i}+[X_\beta, X_{-(\beta-\beta_i)}]X_{\beta-\beta_i}=d_{\beta_i}X_\beta,\,\,\text{for some $d_{\beta_i} \in \Z_{>0}$.}
\end{equation}
 To prove this, write 
\begin{equation} \label{eqn12.5.4}
[X_\beta, X_{-\beta_i}]=cX_{\beta-\beta_i},\,\,\text{for some $c \neq 0$. }
\end{equation}
 Then, 
 \begin{equation*}
\begin{aligned}
([X_\beta, X_{-(\beta-\beta_i)}], X_{-\beta_i}) &=-c(X_{-(\beta-\beta_i)}, X_{\beta-\beta_i}), \ \text{by \eqref{eqn12.5.4}} \\
&=-c.
\end{aligned}
\end{equation*}
Thus, 
\begin{equation} \label{eqn12.5.5}
[X_\beta, X_{-(\beta-\beta_i)}] = -cX_{\beta_i}.
\end{equation}
Combining \eqref{eqn12.5.4} and \eqref{eqn12.5.5}, we get 
\begin{align*}
\hspace{0.5em} [X_{\beta}, X_{-\beta_i}] X_{\beta_i}+[X_\beta, X_{-(\beta-\beta_i)}]X_{\beta-\beta_i} &= c[X_{\beta-\beta_i}, X_{\beta_i}] \\
&= [[X_{\beta}, X_{-\beta_i}], X_{\beta_i}], \ \text{by \eqref{eqn12.5.4}} \\
&=d_{\beta_i}X_\beta,\,\, \text{for some $d_{\beta_i} \in \Z_{>0}$ by $sl_2$-module theory.}
\end{align*}
This proves \eqref{eqn12.5.6}. Substituting \eqref{eqn12.5.6} into \eqref{eqn12.5.7}, we get 
\begin{equation}
(l+m+h^\vee) X_\beta(k) \cdot v^\ast_\lambda(L_{-k}(w)) = ( lk+(\lambda_{|\mathring{\mf[h]}}, 
\beta_{|\mathring{\mf[h]}})+(\mu, \beta) +km + \frac{1}{2} \sum_{\substack{\beta_i \in \mathring{\Phi}^+:\\   \beta-\beta_i \in \mathring{\Phi}^+}} d_{\beta_i}) w_\mu \neq 0,
\end{equation}
since $\beta\in \mathring{\Phi}^+$. This proves the first part of the lemma. \\

Now, consider the case where $\beta=\delta-\gamma$ for some $\gamma \in \mathring{\Phi}^+$. Recall that $\theta$ is the highest root of $\mathring{\mf[g]}$. By equation \eqref{eqn12.5.1}, 
\begin{equation*}
\begin{aligned}
(l+m+h^\vee) X_{-\gamma}(k+1) \cdot v^\ast_\lambda(L_{-k}(w))& = l(k+1) X_{-\gamma}(1) \cdot w_{\mu-\beta}  - \sum_j \lambda(h_j) \gamma(h^j)X_{-\gamma}(1) \cdot w_{\mu-\beta} \\
 &- \sum_{\substack{\beta_i \in \mathring{\Phi}^+ :\\ \beta_i+\gamma \in \mathring{\Phi}^+}} v^\ast_\lambda(X_{\beta_i} \cdot v^\clubsuit_{\lambda-\beta_i})[X_{-\gamma}, X_{-\beta_i}](1) \cdot w^\clubsuit_{\mu-\beta+\beta_i} \\
 =& l(k+1) X_{-\gamma}(1) \cdot w_{\mu-\beta} - (\lambda_{|\mathring{\mf[h]}}, \gamma_{|\mathring{\mf[h]}})X_{-\gamma}(1) \cdot w_{\mu-\beta} \\
 &- \sum_{\substack{\beta_i \in \mathring{\Phi}^+:\\ \beta_i+\gamma \in \mathring{\Phi}^+}} v^\ast_\lambda(X_{\beta_i}X_{-\beta_i} \cdot  v_\lambda) [X_{-\gamma}, X_{-\beta_i}](1)  \cdot w^\circ_{\mu-\beta+\beta_i} \\
 =& l(k+1) X_{-\gamma}(1) \cdot w_{\mu-\beta} - (\lambda_{|\mathring{\mf[h]}}, \gamma_{|\mathring{\mf[h]}}) X_{-\gamma}(1) \cdot w_{\mu-\beta} \\
 &+ \sum_{\substack{\beta_i \in \mathring{\Phi}^+: \\ \beta_i+\gamma \in \mathring{\Phi}^+}} [X_{-\gamma}, X_{-\beta_i}](1) X_{\beta_i} \cdot w_{\mu-\beta},\,\,\text{using equation \eqref{eqn12.5.3}.}
 \end{aligned}
\end{equation*}
Finally, we get
\begin{align} \label{eqn12.6.10} 
(l+m+h^\vee)&X_{-\gamma}(k+1)\cdot v_\lambda^*(L_{-k}w)
=l(k+1)X_{-\gamma}(1)\cdot w_{\mu-\beta}\notag\\
&-(\lambda_{|\mathring{\h}}, \gamma_{|\mathring{\h}})X_{-\gamma}(1)\cdot w_{\mu-\beta}+\sum_{\substack{\beta_i \in \mathring{\Phi}^+:\\ \beta_i+\gamma \in \mathring{\Phi}^+}}[X_{-\gamma}(1), X_{-\beta_i}]X_{\beta_i}\cdot w_{\mu-\beta}.
\end{align}

\end{proof}

\begin{Prop} \label{prop12.2} Let $(\lambda, \mu, \beta)$ be a Wahl triple for a real root $\beta$. Then, the condition (b) of Proposition \ref{prop12.1} (specifically, the identity \eqref{eqn12.4new}) is satisfied for some embedding $V(\lambda+\mu-\beta) \subset V(\lambda) \otimes V(\mu)$ in the following cases:

 \vskip1ex
(a) $\mathring{\g}$ is arbitrary but $\beta$ is of the form $k\delta +\gamma $, for any $\gamma\in \mathring{\Phi}^+$ and $k\geq 0$.

\vskip1ex
(b) $\mathring{\g}$ is arbitrary but $\beta$ is of the form $(k+1)\delta - \theta$, for any $k\geq 0$, where $\theta$ is the highest root of $\mathring{\g}$. 

\vskip1ex
(c) $\mathring{\g}$ is simply-laced.

\vskip1ex
(d) $\mathring{\g}$ is of type $B_\ell \,(\ell \geq 2)$.

\vskip1ex
(e) $\mathring{\g}$ is of type $C_\ell \,(\ell \geq 2)$.

\vskip1ex
(f) $\mathring{\g}$ is not of type $G_2$ and $\lambda$ is regular dominant.
    \end{Prop}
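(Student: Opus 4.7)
The plan is to verify condition (b) of Proposition \ref{prop12.1}, namely the non-vanishing \eqref{eqn12.4new}, case by case. The key preliminary observation is that, under the chain of identifications established in the proof of Proposition \ref{prop12.1}, the existence of an $f_\beta$ satisfying \eqref{eqn12.4new} is equivalent to the existence of a vector $v \in V(\mu)_{\mu-\beta}$ which both witnesses the embedding $V(\lambda+\mu-\beta) \subset V(\lambda)\otimes V(\mu)$ via Proposition \ref{mult} and also satisfies $X_\beta \cdot v \neq 0$ in $V(\mu)_\mu = \C v_\mu$. The strategy is therefore to inspect, for each case, the explicit vectors $v$ produced by the constructions of Propositions \ref{oneroot}, \ref{tworoot}, \ref{finite}, Lemma \ref{lem9.1}, and (where necessary) Section \ref{exceptional}, and verify this non-degeneracy, invoking Lemma \ref{lem12.5} to propagate in $k$ when required.

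For case (a), the base case $\beta = \gamma \in \mathring{\Phi}^+$ is immediate from the proofs of Propositions \ref{oneroot} and \ref{tworoot} (as invoked in Proposition \ref{finite}): the constructions there exhibit $X_\gamma \cdot v$ as a nonzero multiple of $v_\mu$. Lemma \ref{lem12.5}(1) then propagates \eqref{eqn12.4new} to $\beta = \gamma + k\delta$ for all $k \geq 0$. For case (b), I first verify the base $\beta = \alpha_0 = \delta - \theta$ with $v := f_0 \cdot v_\mu$, yielding $X_{\alpha_0} \cdot v = \mu(\alpha_0^\vee) v_\mu$; both $\mu(\alpha_0^\vee)$ and $\lambda(\alpha_0^\vee)$ are strictly positive because $F_{\alpha_0} \supset S$ forces $0 \notin S$ and, combined with (P2), forces both values positive. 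To propagate to $\beta = (k+1)\delta - \theta$ for $k \geq 1$ I apply Lemma \ref{lem12.5}(2) with $\gamma = \theta$: since $\theta$ is the highest root of $\mathring{\g}$, the summation term in \eqref{eqn12.6.10} over $\beta_i \in \mathring{\Phi}^+$ with $\beta_i + \theta \in \mathring{\Phi}^+$ is empty, so the identity reduces to $(l+m+h^\vee) X_{-\theta}(k+1) \cdot v_\lambda^*(L_{-k}w) = (l(k+1) - (\lambda | \theta)) X_{-\theta}(1) \cdot w_{\mu-\alpha_0}$; using $\lambda(K) = \lambda(\alpha_0^\vee) + \lambda(\theta^\vee)$, the scalar factor equals $lk + \lambda(\alpha_0^\vee)$, which is strictly positive.

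Cases (c)--(f) then combine (a), (b) with direct applications of Propositions \ref{oneroot} and \ref{tworoot} to each remaining real root. For simply-laced $\mathring{\g}$ (case (c)), an elementary root-length argument shows $\beta - 2\alpha_i \notin \Phi^+$ for every real root $\beta$ and every simple $\alpha_i$ \emph{except} when $\beta = k\delta - \theta$ (which falls under (b)); all other roots are then handled by Proposition \ref{oneroot} via $v = X_{-\beta} \cdot v_\mu$. For $B_\ell$ (case (d)) the exceptional real roots are the $(k+1)\delta - \gamma$ with $\gamma$ in Table \ref{Exceptional}, for which the equality $F_\beta = F_{\beta - \alpha_\ell}$ (being independent of $k$, as one checks directly) allows Proposition \ref{tworoot}(2) to produce the required vector. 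For $C_\ell$ (case (e)), the exceptional roots satisfy $\rho_\beta(\beta^\vee) = 1$, so the second clause of Proposition \ref{oneroot} applies at $\mu = \rho_\beta$; the non-degeneracy then extends to general Wahl triples via the additivity map of Corollary \ref{additive}, which preserves $X_\beta \cdot v \neq 0$ because it sends $c v_\mu$ to the corresponding multiple of $v_{\mu+\mu'}$. Case (f) is handled by Proposition \ref{oneroot} when $\beta - 2\alpha_i \notin \Phi^+$ for all $i$, and otherwise by Proposition \ref{tworoot}(1), whose hypotheses are satisfied under the regularity of $\lambda$ and $\g \neq G_2^{(1)}$.

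The main technical obstacle is case (b): Lemma \ref{lem12.5}(2) supplies only an identity connecting the condition at level $k$ to that at level $0$, leaving open \emph{a priori} the possibility that the scalar on the right vanishes and yields no useful information. The argument succeeds precisely because the choice $\gamma = \theta$ empties the sum term in \eqref{eqn12.6.10}, and the remaining scalar $lk + \lambda(\alpha_0^\vee)$ is manifestly positive under (P2). A secondary difficulty is the bookkeeping for (d) and (e), which requires verifying the hypotheses of Propositions \ref{oneroot} or \ref{tworoot} at \emph{every} level $k \geq 0$; this reduces to the observation that $F_\beta$ for $\beta = (k+1)\delta - \gamma$ is essentially independent of $k$, so the $\delta$-maximal verifications of Section \ref{exceptional} extend uniformly.
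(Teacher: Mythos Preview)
Your overall strategy matches the paper's, and cases (b), (c), (d), (f) are handled correctly. There are, however, two issues.

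For case (a), your citation is off: Proposition \ref{finite} does not invoke Propositions \ref{oneroot} or \ref{tworoot}; it defers to \cite{Ku1}. More to the point, Propositions \ref{oneroot} and \ref{tworoot} do \emph{not} by themselves cover every $\gamma\in\mathring{\Phi}^+$, since there exist finite roots with $\gamma-2\alpha_i\in\mathring{\Phi}^+$, $\lambda$ not regular, and $F_\gamma\neq F_{\gamma-\alpha_i}$ (e.g.\ in type $F_4$). The paper therefore cites \cite{Ku1}, \S2.7 for the base case, where the required non-degeneracy $X_\gamma\cdot v\neq 0$ is established in full for the finite setting. This is a citation gap rather than a mathematical one, but you should point to \cite{Ku1} rather than to the affine propositions.

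The substantive gap is in case (e). Your argument invokes the clause $\mu(\beta^\vee)=1$ of Proposition \ref{oneroot} at $\mu=\rho_\beta=\Lambda_{i-1}$. This is valid only for the $\delta$-maximal root $\beta=\delta-\gamma_i$: for $\beta=(k+1)\delta-\gamma_i$ one has $\beta^\vee=-\gamma_i^\vee+(k+1)K$ (since $(\gamma_i|\gamma_i)=2$), whence $\Lambda_{i-1}(\beta^\vee)=1+k\neq 1$ for $k\geq 1$. Your closing claim that ``the $\delta$-maximal verifications of Section \ref{exceptional} extend uniformly because $F_\beta$ is independent of $k$'' works for $B_\ell$ (where the relevant hypothesis is $F_\beta=F_{\beta-\alpha_\ell}$), but fails for $C_\ell$, where the hypothesis $\mu(\beta^\vee)=1$ is not a condition on $F_\beta$. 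Moreover, Proposition \ref{tworoot}(2) is unavailable here: one checks that $F_\beta\neq F_{\beta-\alpha_{i-1}}$ (for instance $0\in F_\beta$ but $0\notin F_{\beta-\alpha_{i-1}}$). The paper instead carries out an explicit computation with the identity \eqref{eqn12.6.10} of Lemma \ref{lem12.5}, taking $w_{\mu-\beta}=X_{\gamma_i}(-1)\cdot w_{\Lambda_{i-1}}$ and evaluating each bracket term against the root combinatorics of $C_\ell$ to obtain $(k+i)w_{\Lambda_{i-1}}\neq 0$. You need this (or an equivalent) computation to close the gap for $k\geq 1$.
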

\begin{proof} (a) By \cite{Ku1}, $\S$2.7, the proposition is true for $\beta =\gamma$. Thus, (a) follows follows from Lemma 
\ref{lem12.5}.

\vskip1ex
(b) In this case $\beta = \alpha_0+k\delta$ for $k\geq 0$. For $\beta =\alpha_0$, the proposition follows from the proof of Lemma \ref{lem9.1}. Now, to prove the proposition for $\beta = \alpha_0+k\delta$,  by the identity \eqref{eqn12.6.10}
of the proof of Lemma \ref{lem12.5}, we get that the right side of the identity \eqref{eqn12.6.10} equals $(lk+(\lambda, \alpha_0^\vee))w_\mu \neq 0$. This proves (b).

\vskip1ex
(c) Using (a) and (b), it suffices to prove the proposition for  $\beta = (k+1)\delta -\gamma$, for $k\geq 0$ and $\gamma\in \mathring{\Phi}^+\setminus \{\theta\}.$ In this case the proposition follows from the proof of Proposition 
\ref{oneroot}.

\vskip1ex
(d) As in (c), it suffices to prove the  proposition for  $\beta = (k+1)\delta -\gamma$, for $k\geq 0$ and $\gamma\in \mathring{\Phi}^+\setminus \{\theta\}.$ Observe first that if $\beta - 2\alpha_i\notin \Phi^+$ for any simple root $\alpha_i 
\, (0\leq i\leq \ell)$, then by the proof of Proposition \ref{oneroot}, the proposition follows. So, assume that 
$\beta - 2\alpha_{i_o}\in \Phi^+$  for some (and hence unique by Lemma \ref{root}) simple root $\alpha_{i_o}$. If 
$i_o\neq 0$, then by $\S$9.1 for $B_\ell^{(1)}, F_\beta = F_{\beta-\alpha_{i_o}}$. (Observe that $F_\beta =F_{\beta+k\delta}$ for any $k\in \mathbb{Z}$.)
Hence, the proposition in this case follows from the proof of Proposition \ref{tworoot}. So, assume now that $i_o=0$. In this case, by the proof of Lemma \ref{lem9.1}, $\beta = (k+1) \delta-\theta$, which is already covered by (b). This completes the proof of the proposition in the case (d). 

\vskip1ex
(e) As in (d), it suffices to prove the  proposition for  $\beta = (k+1)\delta -\gamma$, for $k\geq 0$ and $\gamma\in \mathring{\Phi}^+\setminus \{\theta\}$ such  that 
$\beta - 2\alpha_{j}\in \Phi^+$  for some (and hence unique) simple root $\alpha_{j}\, (1\leq j\leq \ell)$. Thus,   $\beta = (k+1)\delta -\gamma_i,\, k\geq 0$, where $\gamma=\gamma_i\,(2\leq i\leq \ell)$ is as in Example of $C_\ell^{(1)}$ in Section 9. Further, using Lemma \ref{Newlemma}, we can assume that $\lambda =\mu = \rho_\beta  =\Lambda_{i-1}$ (cf.  Example of $C_\ell^{(1)}$ in Section 9). (Observe that if the proposition is valid for the Wahl triple $(\lambda, \mu, \beta)$, then the proposition is valid for  $(\lambda+\lambda', \mu+ \mu', \beta)$ for any dominant weights $\lambda', \mu'\in \dom$ by the proof of Corollary \ref{additive}.) 

For $\beta=\delta-\gamma_i$, the proposition follows from the proof of Proposition \ref{oneroot} since $\Lambda_{i-1}(\beta^\vee)=1.$ We now prove the proposition for  $\beta=(k+1)\delta-\gamma_i$ for $k\geq 1$ (and  $\lambda= \mu=\Lambda_{i-1}$). We use the identity \eqref{eqn12.6.10} of the proof of Lemma \ref{lem12.5} to show that in this case 
$X_{-\gamma_i}(k+1)\cdot v_\lambda^*(L_{-k}w) \neq 0:$

By the proof of Proposition \ref{oneroot}, we can take $w_{\Lambda_{i-1}-(\delta-\gamma_i)} = X_{\gamma_i}(-1)\cdot w_{\Lambda_{i-1}}$. We freely use the notation from \cite{Bou}, Planche III. Then, for any $\beta_j \in \mathring{\Phi}^+$ such that  $\beta_j+\gamma_i \in \mathring{\Phi}^+$, we get that $\beta_j$ is precisely of the form $\epsilon_j-\epsilon_i$ for $1\leq j <i$ 
(and $\gamma_i =2\epsilon_i$). Thus,
\begin{align} [X_{-\gamma_i}(1), X_{-\beta_j}]X_{\beta_j} X_{\gamma_i}(-1)\cdot w_{\Lambda_{i-1}}&= 
\left[[X_{-\gamma_i}(1), X_{-\beta_j}], [X_{\beta_j} ,X_{\gamma_i}(-1)]\right]\cdot w_{\Lambda_{i-1}}\notag\\
&= \left([X_{-\gamma_i}(1), X_{-\beta_j}], [X_{\beta_j} ,X_{\gamma_i}(-1)]\right)(\Lambda_{i-1}, \delta-\gamma_i-\beta_j) w_{\Lambda_{i-1}},\notag\\
&\medskip\,\,\,\,\,\,\,\,\,\,\,\,\,\,\text{ by \cite{Ku3}, Theorem 1.5.4}\notag\\
&= \left(X_{-\gamma_i}(1), [X_{-\beta_j}, [X_{\beta_j} ,X_{\gamma_i}(-1)]]\right)(\Lambda_{i-1}, \delta-\gamma_i-\beta_j) w_{\Lambda_{i-1}}\notag\\
&= \left(X_{-\gamma_i}(1), 2(\gamma_i(\beta_j^\vee)+3)X_{\gamma_i}(-1)\right)(\Lambda_{i-1}, \delta-\gamma_i-\beta_j) w_{\Lambda_{i-1}},\notag\\
&\text{since $(\ad X_{-\beta_j})X_{\gamma_i}=(\ad X_{\beta_j})^3X_{\gamma_i}=0$ and $(\ad X_{\beta_j})^2X_{\gamma_i}\neq 0$}\notag\\
&=w_{\Lambda_{i-1}},\,\,\,\text{since $(\Lambda_{i-1}, \delta-\gamma_i-\beta_j)= 1/2$ and $\gamma_i(\beta_j^\vee)=2$}.
\end{align}
Hence,
$$\sum_{1\leq j <i} \,[X_{-\gamma_i}(1), X_{-\beta_j}]X_{\beta_j} X_{\gamma_i}(-1)\cdot w_{\Lambda_{i-1}}= (i-1)w_{\Lambda_{i-1}}.$$
Further,
$$X_{-\gamma_i}(1)X_{\gamma_i}(-1)\cdot w_{\Lambda_{i-1}}= [X_{-\gamma_i}(1), X_{\gamma_i}(-1)]\cdot w_{\Lambda_{i-1}}= w_{\Lambda_{i-1}},\,\,\text{since $l=m:=\Lambda_{i-1}(K)=1$}.$$
So, the right side of  the identity \eqref{eqn12.6.10} of the proof of Lemma \ref{lem12.5} becomes 
$$\left((k+1) +i-1\right)w_{\Lambda_{i-1}}= (k+i)w_{\Lambda_{i-1}}\neq 0.$$
This proves the proposition in this case.

\vskip1ex
(f) Proof of the proposition in this case follows from the proofs of Propositions \ref{oneroot} and \ref{tworoot}. 

\end{proof}

   {\rm Following is the main theorem of this section:}
   
    \begin{Thm} \label{thm12.1} Let $\g$ be an affine Kac-Moody Lie algebra and let  $w\in W_\cP'$ be such that  the Schubert variety $X_w^\cP$ is $\cP$-stable. Then, for any   $\lambda, \mu \in  {\dom_S}^o $ (where $S$ is an arbitrary subset of the simple roots) such that the condition (b) of Proposition  \ref{prop12.1} is satisfied for all the Wahl triples $(\lambda, \mu, \beta)$ for any real root $\beta \in \Phi^+$ (cf. Proposition 
 \ref{prop12.2}),
   \begin{equation} \label{eqn12.6.1} H^p(\hat{\X}_w^\cP, \hat{\cI}_e^2\otimes \cL_w(\lambda\boxtimes \mu)) = 0,\,\,\,\text{for all $p>0$}.
   \end{equation}
   In particular, the canonical Gaussian map 
    \begin{equation} \label{eqn12.6.2}    
   H^0(\hat{\X}_w^\cP, \hat{\cI}_e\otimes \cL_w(\lambda\boxtimes \mu))   \to  H^0(\hat{\X}_w^\cP, (\hat{\cI}_e/\hat{\cI}_e^2)\otimes \cL_w(\lambda\boxtimes \mu)) \end{equation}
   is surjective.  
   
   In particular, \eqref{eqn12.6.1} and \eqref{eqn12.6.2} are valid for any simply-laced $\mathring{\g}$ and $\mathring{\g}$ of types $B_\ell, C_\ell$. Moreover, they also are valid for   $\mathring{\g}$ of type $F_4$ in the case $\cP$ is the Borel subgroup $\B$. 
   \end{Thm}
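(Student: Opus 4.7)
The plan is to assemble the preceding results into the desired vanishing and surjectivity statement. The case $p = 1$ follows immediately: Proposition \ref{prop12.2} verifies condition (b) of Proposition \ref{prop12.1} in each of the cases listed in the theorem, and Proposition \ref{prop12.1} then yields $H^1(\hat{\X}_w^\cP, \hat{\cI}_e^2 \otimes \cL_w(\lambda \boxtimes \mu)) = 0$. The particular cases mentioned in the last assertion---simply-laced $\mathring{\g}$, types $B_\ell$ and $C_\ell$, and $F_4$ with $\cP = \B$---correspond directly to parts (c), (d), (e), and (f) of Proposition \ref{prop12.2}; in the $F_4$ case with $\cP = \B$ we have $S = \emptyset$, so membership in ${\dom_S}^o$ forces $\lambda$ and $\mu$ to be regular dominant, which matches the hypothesis of part (f).

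For $p \geq 2$, I would use the short exact sequence of sheaves on $\hat{\X}_w^\cP$,
\begin{equation*}
0 \to \hat{\cI}_e^2 \otimes \cL_w(\lambda \boxtimes \mu) \to \hat{\cI}_e \otimes \cL_w(\lambda \boxtimes \mu) \to (\hat{\cI}_e/\hat{\cI}_e^2) \otimes \cL_w(\lambda \boxtimes \mu) \to 0,
\end{equation*}
whose middle term has vanishing higher cohomology by equation \eqref{eqn12.1} in Lemma \ref{lem12.1}. To handle the quotient, I would invoke the auxiliary sheaf sequence
\begin{equation*}
0 \to (\hat{\cI}_e/\hat{\cI}_e^2) \otimes \cL_w(\lambda \boxtimes \mu) \to (\hat{\cO}_w/\hat{\cI}_e^2) \otimes \cL_w(\lambda \boxtimes \mu) \to (\hat{\cO}_w/\hat{\cI}_e) \otimes \cL_w(\lambda \boxtimes \mu) \to 0
\end{equation*}
together with the isomorphisms of Lemma \ref{lem12.1}, reducing the cohomology of the two outer terms (for $k=1,2$) to the cohomology on $\X_\cP$ of $\cL(\lambda)$ tensored with the homogeneous bundle associated to $\C_\mu^*$, respectively to $(\hat{T}_e(X_w^\cP) \cdot v_\mu)^*$ (cf.\ Proposition \ref{prop11.1}). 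The vanishing of the higher cohomology of these on $\X_\cP$ is supplied by the standard Kac-Moody Borel-Weil result (\cite{Ku3}, Theorem 8.2.2) and Kumar's cohomology vanishing theorem (\cite{Ku2}, Theorem 2.7), exactly the inputs already used to establish \eqref{eqn12.1}. Chasing the two long exact cohomology sequences then yields \eqref{eqn12.6.1} for $p \geq 2$.

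The surjectivity of the Gaussian map \eqref{eqn12.6.2} is then immediate: the relevant segment of the long exact cohomology sequence of the first short exact sequence above reads
\begin{equation*}
H^0(\hat{\X}_w^\cP, \hat{\cI}_e \otimes \cL_w(\lambda \boxtimes \mu)) \to H^0(\hat{\X}_w^\cP, (\hat{\cI}_e/\hat{\cI}_e^2) \otimes \cL_w(\lambda \boxtimes \mu)) \to H^1(\hat{\X}_w^\cP, \hat{\cI}_e^2 \otimes \cL_w(\lambda \boxtimes \mu)),
\end{equation*}
and the rightmost group vanishes by the first paragraph. The genuine analytical obstacles have already been absorbed into Propositions \ref{prop12.1} and \ref{prop12.2}; the work here amounts to careful bookkeeping of short exact sequences together with the same vanishing theorems on $\X_\cP$ that were invoked in Lemma \ref{lem12.1}, and the main point to verify is simply that those vanishing results apply uniformly to the quotient sheaves as well.
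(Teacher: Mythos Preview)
Your overall structure is right, and the $H^1$ case and the surjectivity of the Gaussian map are handled correctly. The paper in fact uses the sequence $0 \to \hat{\cI}_e^2 \to \hat{\cO}_w \to \hat{\cO}_w/\hat{\cI}_e^2 \to 0$ rather than your $0 \to \hat{\cI}_e^2 \to \hat{\cI}_e \to \hat{\cI}_e/\hat{\cI}_e^2 \to 0$, but either route reduces the problem to the same vanishing:
\[
H^p\bigl(\X_\cP,\ \cL(\lambda) \otimes \cL\bigl((\hat{T}_e(X_w^\cP)\cdot v_\mu)^*\bigr)\bigr) = 0 \quad \text{for all } p > 0.
\]

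Here is the gap. This last vanishing is \emph{not} a consequence of \cite{Ku3}, Theorem 8.2.2 or \cite{Ku2}, Theorem 2.7: those results concern line bundles (respectively the specific diagonal setup for $\cL_w(\lambda\boxtimes\mu)$), whereas $(\hat{T}_e(X_w^\cP)\cdot v_\mu)^*$ is a genuine finite-dimensional $\cP$-module and the associated bundle has higher rank. The paper supplies a real argument at this point: filter the $\cP$-module $M_w := \C_\lambda^* \otimes (\hat{T}_e(X_w^\cP)\cdot v_\mu)^*$ by irreducible $\cP$-subquotients $V_j$ with lowest $\mathfrak{l}$-weight $\delta_j$, reduce via Leray and classical Borel--Weil--Bott for the Levi $\mathfrak{l}$ to $H^p(\X_\B, \cL(-\delta_j))$, and then apply the affine Borel--Weil--Bott theorem (\cite{Ku3}, Corollary 8.3.12). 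The input needed for that last step is the bound $-\delta_j(\alpha_i^\vee) \geq -1$ for $i \notin S$, which holds because each $\delta_j$ is of the form $-(\lambda + \mu - \beta)$ with $\beta \in \Phi^+(S) \cup \{0\}$ and $\lambda,\mu \in {\dom_S}^o$, combined with the root-string bound from \cite{Bou}, Page 278, item 6. Your closing sentence (``the main point to verify is simply that those vanishing results apply uniformly'') identifies exactly the place where work is needed but does not do it; this filtration-plus-Borel--Weil--Bott step is the substantive content of the $p \geq 2$ argument and cannot be absorbed into the citations you gave.
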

   \begin{proof} By Proposition \ref{prop12.1}, we get the vanishing 
   $H^1(\hat{\X}_w^\cP, \hat{\cI}_e^2\otimes \cL_w(\lambda\boxtimes \mu)) = 0.$
    
    To prove  the higher cohomology vanishing, consider the cohomology exact sequence, associated to the sheaf exact sequence over $\hat{\X}_W^\cP$:
\begin{equation} \label{eqn12.6} 0\to \hat{\cI}_e^2\otimes \cL_w(\lambda\boxtimes \mu)\to\cL_w(\lambda\boxtimes \mu)\to  (\hat{\cO}_w/\hat{\cI}_e^2)\otimes \cL_w(\lambda\boxtimes \mu) \to 0.
\end{equation}

First of all by \cite{Ku2}, Theorem 2.7 (also see \cite{M2}, Corollaire 3),
\begin{equation}\label{eqn23}
H^p( \hat{\X}^\cP_w,  \cL_w(\lambda\boxtimes \mu)) =0,\,\,\,\text{for all $p>0$}
\end{equation}
Further, by Lemma \ref{lem12.1} and Proposition \ref{prop11.1},
\begin{equation}\label{eqn24}H^p( \hat{\X}^\cP_w,  (\hat{\cO}_w/\hat{\cI}_e^2)\otimes \cL_w(\lambda\boxtimes \mu)) \simeq H^p(\X_\cP, \cL(\lambda) \otimes \cL((\hat{T}_e(X_w^\cP)\cdot v_\mu)^*)).
\end{equation}
Now,  we prove the vanishing: 
\begin{equation}\label{eqn25}H^p({\X}_\cP,  \cL(\lambda) \otimes \cL((\hat{T}_e(X_w^\cP)\cdot v_\mu)^*)) =0,\,\,\,\text{for all $p>0$}:
\end{equation}
Take a $\cP$-module filtration of the finite dimensional $\cP$-module $M_w:=  \C_{\lambda}^*\otimes 
(\hat{T}_e(X_w^\cP)\cdot v_\mu)^*$:
$$0= F_0\subset F_1 \subset \cdots \subset F_d=M_w,$$
such that $V_j:= F_j/F_{j-1}$ is an irreducible $\cP$-module for each $1\leq j\leq d$. By Engel's theorem, $V_j$ is a trivial $\cU$-module, thus it can be considered as an irreducible module for the (finite dimensional) Levi algebra $\mathfrak{l}:= \p/\fu$. Let $\delta_j$ be the lowest weight of $V_j$ as an $\mathfrak{l}$-module. Thus,
 \begin{equation}\label{eqn26} \delta_j(\alpha_k^\vee) \leq 0,\,\,\,\text{for all $k\in S$}.
 \end{equation}
To prove the vanishing \eqref{eqn25}, it suffices to show that 
\begin{equation}\label{eqn27} H^p(\X_\cP, \cL(V_j)) = 0,\,\,\,\text{for all $p>0$}.
\end{equation}
From the classical Borel-Weil-Bott theorem for the finite dimensional reductive Lie algebra $\mathfrak{l}$ and the Leray spectral sequence for the fibration $\G/\B \to \G/\cP$, we get 
\begin{equation}\label{eqn28}
H^p(\X_\cP, \cL(V_j)) \simeq H^p(\X_\B, \cL(-\delta_j)), \,\,\,\text{for all $p\geq 0$}.
\end{equation}
Observe further that $\delta_j$, in particular, is a weight of   $\C_{\lambda}^*\otimes 
(\hat{T}_e(X_w^\cP)\cdot v_\mu)^*$ which is a quotient of  $\C_{\lambda}^*\otimes 
(\g\cdot v_\mu)^\vee$. Thus, $\delta_j$ is of the form $\delta_j= -(\lambda +\mu-\beta)$, for some $\beta\in \Phi^+(S)\cup \{0\}$. Hence, for $i\notin S$, since $\lambda, \mu\in {\dom_S}^o $, by \cite{Bou}, Page 278, item 6,
  \begin{equation}\label{eqn29} -\delta_j(\alpha_i^\vee) \geq -1.
 \end{equation}
Thus, combining the equations \eqref{eqn26},  \eqref{eqn29} and   \eqref{eqn28}, the vanishing  \eqref{eqn27}
follows from the affine analogue of the Borel-Weil-Bott theorem (cf. \cite{Ku3}, Corollary 8.3.12). This proves the vanishing \eqref{eqn25}.

Thus, the cohomology exact sequence associated to the sheaf exact sequence \eqref{eqn12.6} and the above two vanishing results \eqref{eqn23} and \eqref{eqn25} (using the identification \eqref{eqn24}) prove \eqref{eqn12.6.1} and \eqref{eqn12.6.2} of the theorem.

The last `In particular' statement of the theorem follows from Proposition \ref{prop12.2}. 
 \end{proof}
    
    {\rm For any $w\in W_\cP'$ such that $X_w^\cP$ is $\cP$-stable, we think of $\hat{\X}_w^\cP$ as a sub ind-variety of   
$\X_\cP\times  \X_\cP$ via the isomorphism $\delta: \G\times^\cP \X_\cP\to \X_\cP\times  \X_\cP$ (as in the beginning of Section 12). From now on, we think of the sheaf $\hat{\cI}_e^k = \hat{\cI}_e(w)^k$ over $\hat{\X}_w^\cP$ (for any $k\geq 1$) as a sheaf over 
$\X_\cP\times  \X_\cP$ by considering $\delta_*(i_w)_*(\hat{\cI}_e(w)^k)$, where $i_w: 
 \G\times^\cP X_w^\cP \to   \G\times^\cP \X_\cP$ is the canonical embedding. Let $\cI_D$ be the ideal sheaf of the diagonal $D \subset \X_\cP\times  \X_\cP$. Then, from the sheaf exact sequence:
 $$0\to \hat{\cI}_e(w)\to \hat{\cO}_w\to \hat{\cO}_e\to 0,$$
  we get that 
 $$ \cI_D \simeq \varprojlim_w\,\hat{\cI}_e(w).$$
 Define 
 $$ \tilde{\cI}_D^2 := \varprojlim_w\,\hat{\cI}_e(w)^2.$$ 
 Observe that $\cI_D^2 \subset  \tilde{\cI}_D^2$ and the inclusion is strict in general.

As a corollary of the above  theorem, we get the following. This was conjectured in the finite case by Wahl \cite{Wah} and proved in that case by Kumar \cite{Ku1}. }
 
 \begin{Cor} \label{coro12.7} Under the notation and assumptions of Theorem \ref{thm12.1}, the canonical Gaussian map 
 $$H^0(\X_\cP\times \X_\cP, \cI_D\otimes \cL(\lambda\boxtimes \mu))  \to H^0(\X_\cP\times \X_\cP, (\cI_D/\tilde{\cI}_D^2)\otimes \cL(\lambda\boxtimes \mu))$$
 is surjective.  
 
 In particular, it is surjective for any simply-laced $\mathring{\g}$ as well as $\mathring{\g}$ of types $B_\ell, C_\ell$. In addition, it is surjective for $\mathring{\g}$ of type $F_4$ in the case $\cP$ is the Borel subgroup $\B$. 
 \end{Cor}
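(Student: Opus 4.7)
The plan is to deduce the corollary from Theorem \ref{thm12.1} by passing to the inverse limit along the directed system of $\cP$-stable Schubert varieties $X_w^\cP$, $w \in W_\cP'$, whose $\G$-translates $\hat{\X}_w^\cP$ exhaust $\X_\cP \times \X_\cP$ under the isomorphism $\delta$. Starting from the very definitions $\cI_D = \varprojlim_w \hat{\cI}_e(w)$ and $\tilde{\cI}_D^2 = \varprojlim_w \hat{\cI}_e(w)^2$ recorded in the paragraph preceding the corollary, together with the standard identification of global sections on an ind-variety as an inverse limit of sections on its finite-dimensional approximations, I would rewrite the Gaussian map on $\X_\cP \times \X_\cP$ as the inverse limit of the Gaussian maps on $\hat{\X}_w^\cP$:
\begin{equation*}
\varprojlim_w H^0(\hat{\X}_w^\cP, \hat{\cI}_e(w) \otimes \cL_w(\lambda \boxtimes \mu)) \longrightarrow \varprojlim_w H^0(\hat{\X}_w^\cP, (\hat{\cI}_e(w)/\hat{\cI}_e(w)^2) \otimes \cL_w(\lambda \boxtimes \mu)).
\end{equation*}

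Next, at each level $w$, the short exact sequence
\begin{equation*}
0 \to \hat{\cI}_e(w)^2 \otimes \cL_w(\lambda\boxtimes\mu) \to \hat{\cI}_e(w) \otimes \cL_w(\lambda\boxtimes\mu) \to (\hat{\cI}_e/\hat{\cI}_e^2)(w) \otimes \cL_w(\lambda\boxtimes\mu) \to 0
\end{equation*}
combined with the vanishing $H^1(\hat{\X}_w^\cP, \hat{\cI}_e(w)^2 \otimes \cL_w(\lambda\boxtimes\mu)) = 0$ of Theorem \ref{thm12.1} produces the short exact sequence of $H^0$'s whose right-hand map is the Gaussian map on $\hat{\X}_w^\cP$. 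Since inverse limits of surjections need not remain surjective, the substantive step is to control the $\varprojlim{}^1$ of the kernel system $\{H^0(\hat{\X}_w^\cP, \hat{\cI}_e(w)^2 \otimes \cL_w(\lambda\boxtimes\mu))\}$; if this vanishes, the limit of the short exact sequences stays exact and the desired surjectivity on $\X_\cP \times \X_\cP$ follows, with the final ``In particular'' assertion inherited directly from the corresponding one in Theorem \ref{thm12.1}.

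The main obstacle is therefore verifying the Mittag-Leffler condition (or equivalently $\varprojlim{}^1 = 0$) for the above inverse system of kernels. My approach would be to show that, for any pair $w'' \leq w'$ in $W_\cP'$ giving $\cP$-stable Schubert varieties, the restriction map
\begin{equation*}
H^0(\hat{\X}_{w'}^\cP, \hat{\cI}_e(w')^2 \otimes \cL_{w'}(\lambda\boxtimes\mu)) \to H^0(\hat{\X}_{w''}^\cP, \hat{\cI}_e(w'')^2 \otimes \cL_{w''}(\lambda\boxtimes\mu))
\end{equation*}
is surjective, by analysing the sheaf sequence associated to the closed embedding $\hat{\X}_{w''}^\cP \hookrightarrow \hat{\X}_{w'}^\cP$ and reducing the required $H^1$-vanishing, via Lemma \ref{lem12.1} and the Leray spectral sequence for $\sigma: \hat{\X}_{w'}^\cP \to \X_\cP$, to a cohomology-vanishing statement on $\X_\cP$ of the kind established in the proof of Theorem \ref{thm12.1} through the affine Borel-Weil-Bott theorem (\cite{Ku3}, Corollary 8.3.12) applied to the $\cP$-module filtration of $\hat{T}_e(X_{w'}^\cP)\cdot v_\mu$. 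Modulo this Mittag-Leffler check, the corollary is a formal consequence of Theorem \ref{thm12.1}.
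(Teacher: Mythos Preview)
Your overall architecture matches the paper's: pass to the inverse limit over the directed system of $\cP$-stable $\hat{\X}_w^\cP$, use the surjectivity at each finite level coming from Theorem \ref{thm12.1}, and control $\varprojlim^1$ of the kernel system via a Mittag-Leffler condition. Two points are worth flagging.

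First, your proposed verification of Mittag-Leffler is substantially harder than necessary. You aim to prove that each transition map
\[
H^0(\hat{\X}_{w'}^\cP, \hat{\cI}_e(w')^2\otimes \cL_{w'}(\lambda\boxtimes\mu)) \longrightarrow H^0(\hat{\X}_{w''}^\cP, \hat{\cI}_e(w'')^2\otimes \cL_{w''}(\lambda\boxtimes\mu))
\]
is surjective, by producing a new $H^1$-vanishing for a sheaf supported on the relative ideal of $\hat{\X}_{w''}^\cP$ in $\hat{\X}_{w'}^\cP$. That vanishing is not contained in Proposition \ref{prop11.1} or Theorem \ref{thm12.1} (which treat only $\cI_e^k$ on a fixed $X_w^\cP$, not ideals of smaller Schubert varieties inside a larger one), so you would be obliged to prove a genuinely new cohomology result. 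The paper bypasses this entirely: by \cite{M1}, Proposition 12 (used in the proof of Proposition \ref{prop12.1}), each $H^0$ appearing in the short exact sequence is a finite direct sum of modules $V(\theta)^*$, and the transition maps are $\G$-equivariant. Hence each kernel $K_w$ is a $\G$-module of finite length, and the decreasing chain of images in $K_w$ must stabilize. Mittag-Leffler is then immediate, with no further cohomology vanishing needed.

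Second, you tacitly identify $H^0(\X_\cP\times\X_\cP, (\cI_D/\tilde{\cI}_D^2)\otimes\cL(\lambda\boxtimes\mu))$ with the inverse limit of the $H^0(\hat{\X}_w^\cP, (\hat{\cI}_e(w)/\hat{\cI}_e(w)^2)\otimes\cL_w(\lambda\boxtimes\mu))$. A priori one only has the inclusion $j:\cI_D/\tilde{\cI}_D^2 \hookrightarrow \varprojlim_w \hat{\cI}_e(w)/\hat{\cI}_e(w)^2$ from left exactness of $\varprojlim$ on sheaves; the paper then observes that since the composite $j_*\circ\eta$ is surjective, so is the Gaussian map $\eta$ (and $j_*$ is forced to be an isomorphism). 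This is a minor point, but it should be made explicit.
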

 \begin{proof} By Theorem \ref{thm12.1}, for any $w\in W_\cP'$ such that $X_w^\cP$ is $\cP$-stable, the canonical map $$\eta_w: H^0(\X_\cP\times \X_\cP, \hat{\cI}_e(w)\otimes \cL(\lambda\boxtimes \mu))   \to  H^0(\X_\cP \times \X_\cP, (\hat{\cI}_e(w)/\hat{\cI}_e(w)^2)\otimes \cL(\lambda\boxtimes \mu)) $$
   is surjective.  Moreover, $\eta_w$ is a $\G$-module morphism between $\G$-modules which are finite direct sums of $V(\theta)^*$ for some $\theta \in \dom$ (cf. proof of Proposition \ref{prop12.1}). Let $K_w$ be the kernel of $\eta_w$. Then, $K_w$ being a finite direct sum of $V(\theta)^*$, it is easy to see that $\{K_w\}_w$ satisfies the Mittag-Leffler condition (cf. \cite{H2}, Chap. II, $\S$9). Thus, by  \cite{H2}, Chap. II, Proposition 9.1, the canonical map
   \begin{equation} \label{eqn30}
   \varprojlim_w\, H^0(\X_\cP\times \X_\cP, \hat{\cI}_e(w)\otimes \cL(\lambda\boxtimes \mu))   \to    \varprojlim_w\   H^0(\X_\cP \times \X_\cP, (\hat{\cI}_e(w)/\hat{\cI}_e(w)^2)\otimes \cL(\lambda\boxtimes \mu))
   \end{equation} is surjective.   Further, by  \cite{H2}, Chap. II, Proposition 9.2,
    \begin{equation} \label{eqn31}
   \varprojlim_w\, H^0(\X_\cP\times \X_\cP, \hat{\cI}_e(w)\otimes \cL(\lambda\boxtimes \mu))  \simeq H^0(\X_\cP\times \X_\cP, \cI_D\otimes \cL(\lambda \boxtimes \mu))
   \end{equation}
   and
    \begin{equation} \label{eqn32}   
     \varprojlim_w\   H^0(\X_\cP \times \X_\cP, (\hat{\cI}_e(w)/\hat{\cI}_e(w)^2)\otimes \cL(\lambda\boxtimes \mu))\simeq H^0(\X_\cP\times \X_\cP, (\varprojlim_w\, \hat{\cI}_e(w)/\hat{\cI}_e(w)^2)\otimes \cL(\lambda \boxtimes \mu)).    
   \end{equation} 
 From the left exactness of $ \varprojlim$ in the category of sheaves, 
 \begin{equation} \label{eqn33}   
   j: \cI_D/\tilde{\cI}_D^2 \hookrightarrow  
    \varprojlim_w\, \hat{\cI}_e(w)/\hat{\cI}_e(w)^2.  
   \end{equation}  
 Hence, by combining the equations \eqref{eqn30}-\eqref{eqn33}, we get that the composite
 \begin{align*} H^0(\X_\cP\times \X_\cP,\cI_D\otimes & \cL(\lambda\boxtimes \mu))\overset{\eta}{\to} H^0(\X_\cP\times \X_\cP, 
 \cI_D/\tilde{\cI}_D^2 \otimes \cL(\lambda\boxtimes \mu)) \\ &
 \overset{j_*}{\hookrightarrow}
  H^0(\X_\cP\times \X_\cP,    (\varprojlim_w\, \hat{\cI}_e(w)/\hat{\cI}_e(w)^2) \otimes  \cL(\lambda\boxtimes \mu)) 
  \end{align*}
   is surjective and hence so is $\eta$ (and $j_*$ is an isomorphism). This proves the corollary.  
 \end{proof}
 \begin{Rmk} {\rm (a) It is very likely that, under the assumptions of Corollary \ref{coro12.7},  $H^p(\X_\cP\times \X_\cP, \tilde{\cI}_D^2\otimes \cL(\lambda\boxtimes \mu)) =0$ for all $p>0$ (cf. \cite{H1},   Chap. I, Theorem 4.5). Moreover, for $p=1$, it is equivalent to the condition (b) of Proposition \ref{prop12.1}.
 \vskip1ex
 
 (b) It is likely that Theorem \ref{thm12.1} (and hence Corollary \ref{coro12.7}) is valid for any $\g$ and any $\lambda, \mu  \in  {\dom_S}^o. $  }
 \end{Rmk}

\end{document}